\DeclareMathOperator*{\argmin}{argmin} 
\newcommand{\N}{\mathbb{N}}
\newcommand{\Z}{\mathbb{Z}}
\newcommand{\R}{\mathbb{R}}
\crefname{hypothesis}{Hypothesis}{Hypotheses}
\title{Dual Descent ALM and ADMM
\thanks{Submitted to the editors DATE.
\funding{To be added}}
}
\author{
Kaizhao Sun
\thanks{Decision Intelligence Lab, DAMO Academy, Alibaba Group (U.S.) Inc., Bellevue, WA USA
  (\email{kaizhao.s@alibaba-inc.com}).}
\and 
Xu Andy Sun
\thanks{Sloan School of Management, Massachusetts Institute of Technology, Cambridge, MA, USA
  (\email{sunx@mit.edu}).}
}
\begin{document}

\maketitle

% REQUIRED
\begin{abstract}
Classical primal-dual algorithms attempt to solve $\max_{\mu}\min_{x} \mathcal{L}(x,\mu)$ by alternatively minimizing over the primal variable $x$ through primal descent and maximizing the dual variable $\mu$ through dual ascent. However, when $\mathcal{L}(x,\mu)$ is highly nonconvex with complex constraints in $x$, the minimization over $x$ may not achieve global optimality, and hence the dual ascent step loses its valid intuition. This observation motivates us to propose a new class of primal-dual algorithms for nonconvex constrained optimization with the key feature to reverse dual ascent to a conceptually new \emph{dual descent}, in a sense, elevating the dual variable to the same status as the primal variable. Surprisingly, this new dual scheme achieves some best iteration complexities for solving nonconvex optimization problems. In particular, when the dual descent step is scaled by a fractional constant, we name it scaled dual descent (SDD), otherwise, unscaled dual descent (UDD). For nonconvex multiblock optimization with nonlinear equality constraints, we propose SDD-ADMM and show that it finds an $\epsilon$-stationary solution in $\mathcal{O}(\epsilon^{-4})$ iterations. The complexity is further improved to $\mathcal{O}(\epsilon^{-3})$ and $\mathcal{O}(\epsilon^{-2})$ under proper conditions. We also propose UDD-ALM, combining UDD with ALM, for weakly convex minimization over affine constraints. We show that UDD-ALM finds an $\epsilon$-stationary solution in $\mathcal{O}(\epsilon^{-2})$ iterations. These complexity bounds for both algorithms either achieve or improve the best-known results in the ADMM and ALM literature. Moreover, SDD-ADMM addresses a long-standing limitation of existing ADMM frameworks. 
\end{abstract}

% REQUIRED
\begin{keywords}
  Augmented Lagrangian Method, Alternating Direction Method of Multipliers
\end{keywords}

% REQUIRED
\begin{AMS}
	65K05,  % Numerical mathematical programming methods
	90C26,  % Nonconvex programming, global optimization
  	90C30,  % Nonlinear programming
  	90C46   % Optimality conditions and duality in mathematical programming
\end{AMS}

\section{Introduction}\label{sec: introduction}
In this paper, we consider the following problem:
\begin{align}\label{eq: mb-nlp}
	\min_{x\in \R^n}  \left\{f(x) + \sum_{i=1}^p g_i(x_i)~\Big|~ \sum_{i=1}^p h_i(x_i) = 0\right\},
\end{align}
where the variable $x\in \R^n$ has the block-coordinate form $x = [x_1^\top,\cdots, x_p^\top]^\top$ with each $x_i \in \R^{n_i}$ for $i\in[p]:=\{1,2,\dots,p\}$ and $\sum_{i=1}^p n_i = n$. We assume $f:\R^n \rightarrow \R$ has a Lipschitz gradient, and each $g_i:\R^{n_i}\rightarrow \overline{\R}:= \R \cup \{+\infty\}$ is proper, lower-semicontinuous, and possibly nonconvex; in addition, for each $i\in [p]$, constraints $h_i:\R^{n_i} \rightarrow \R^m$ are continuously differentiable over the domain of $g_i$. Denote $g(x) := \sum_{i=1}^p g_i(x_i)$ and $h(x): = \sum_{i=1}^p h_i(x_i)$.

The augmented Lagrangian method (ALM), which was proposed in the late 1960s \cite{hestenes1969multiplier,powell1967method}, provides a powerful algorithmic framework for constrained optimization problems including \eqref{eq: mb-nlp}. Define the augmented Lagrangian function as 
\begin{align}\label{eq: AL}
	\mathcal{L}_{\rho}(x, \mu) :=  f(x)  + g(x) + \langle \mu, h(x)\rangle + \frac{\rho}{2}\|h(x)\|^2,
\end{align}
where $\mu \in \R^m$ and $\rho >0$. In the $(k+1)$-th iteration, the ALM firstly obtains the primal iterate $x^{k+1}$ by minimizing the augmented Lagrangian function with dual variable $\mu^{k}$ fixed, possibly in an inexact way: 
\begin{align}\label{eq: alm-p}
	x^{k+1} \approx \argmin_{x\in \R^n} 	\mathcal{L}_{\rho}(x, \mu^k),
\end{align}
and then updates the dual variable using primal residuals:
\begin{align}\label{eq: alm-d}
	\mu^{k+1} = \mu^k + \varrho_k h(x^{k+1}),
\end{align}
where $\varrho_k>0$ is a positive dual step size. 

The ALM framework is flexible: $x^{k+1}$ in \eqref{eq: alm-p} is allowed to be (some approximate counterpart of) a global minimum \cite{rockafellar1976augmented}, a local minimum \cite{bertsekas2014constrained}, or just a stationary point \cite{andreani2008augmented}. Another possibility is to update blocks of variables $(x_1, \cdots, x_p)$ in a coordinate fashion, i.e., through a Gauss-Seidel or Jacobi sweep; when $h$ is affine, algorithms of this type are commonly known as the alternating direction method of multipliers (ADMM). The dual update \eqref{eq: alm-d} is motivated by the fact that the augmented Lagrangian dual function 
\begin{align} \label{eq: dual function}
	d(\mu) := \min_{x\in \R^n} \mathcal{L}_{\rho}(x, \mu)
\end{align}
is concave, and $-h(x^{k+1}) \in \partial_{\epsilon} (-d)(\mu^{k})$ with any $x^{k+1}$ such that $\mathcal{L}_{\rho}(x^{k+1}, \mu^k) \leq  d(\mu^k) + \epsilon$. In this case, the update \eqref{eq: alm-d} is essentially maximizing the concave function $d$ using an inexact subgradient of $-d$. We refer to \eqref{eq: alm-d} as a \textit{dual ascent} step. A motivation for this paper is that the classic interpretation of dual ascent of \eqref{eq: alm-d} is not valid anymore if the gap between $d(\mu^k) $ and $\mathcal{L}_{\rho}(x^{k+1}, \mu^k)$ is large or cannot be uniformly bounded over iterations, especially when $x^{k+1}$ is a local minimum, a stationary point, or a coordinate-wise solution of the nonconvex function $\mathcal{L}_{\rho}(\cdot, \mu^k)$. 

{This observation opens up new possibilities for algorithmic design within the augmented Lagrangian framework. Given $\mu^k \in \mathbb{R}^m$, let $x^{k+1}$ represent a coordinate-wise solution of $\mathcal{L}(\cdot, \mu^k)$. Notably, since $x^{k+1}$ does not provide valid zero/first-order information of $d$ at $\mu^k$, the intuition of the dual ascent \eqref{eq: alm-d} is lost. Instead of maximizing $d$, the fact that $\nabla_\mu \mathcal{L}{\rho}(x^{k+1}, \cdot) = h(x^{k+1})$ suggests an alternative approach. By ``minimizing" $\mathcal{L}{\rho}$ with respect to $\mu$ and assuming an approximate stationary point can be attained, it is expected that the primal residual $\|h(x^{k+1})\|$ will be small. To pursue this idea, one might be inclined to employ block-coordinate descent (BCD) algorithms \cite{xu2013block, xu2017globally} for $\mathcal{L}{\rho}(x_1, \cdots, x_p, \mu)$; however, the linearity of the function $\mathcal{L}{\rho}(x, \cdot)$ renders $\mathcal{L}_{\rho}$ potentially unbounded in the dual variable $\mu$. To address this, we introduce a regularized augmented Lagrangian function:
\begin{align} \label{eq: regularized AL}
\mathcal{P}(x, \mu) := \mathcal{L}_{\rho}(x, \mu) + \frac{\omega}{2\rho}\|\mu\|^2,
\end{align}
where we include a quadratic term in $\mu$ with $\omega > 0$. Once $x^{k+1}$ is obtained, for example, through a Gauss-Seidel sweep of proximal gradient updates, we can update $\mu^{k+1}$ using the following formulation:
\begin{align} \label{eq: sdd-intro}
\mu^{k+1} = \argmin_{\mu \in \mathbb{R}^m} \mathcal{P}(x^{k+1}, \mu) + \frac{\tau \omega}{2\rho}\|\mu - \mu^k\|^2 = \underbrace{ \frac{\tau }{1+\tau}}_{\text{scaled}} \underbrace{ \left( \mu^k - \tau^{-1}\omega^{-1} \rho h(x^{k+1}) \right) }_{\text{dual descent}},
\end{align}
where $\tau > 0$ and this update is referred to as the \textit{scaled dual descent} (SDD).}

The above update ensures 1) sufficient descent and lower-boundedness of $\mathcal{P}$ and 2) boundedness of the sequence $\{\mu^{k+1}\}_{k\in \N}$, which are critical for the convergence rate analysis of ALM-based algorithms. In particular, we show that, with a near-feasible initialization, the SDD update gives an $\epsilon$-stationary solution of \eqref{eq: mb-nlp} in $\mathcal{O}(\epsilon^{-4})$ iterations, which can be further improved to $\mathcal{O}(\epsilon^{-3})$ {and  $\mathcal{O}(\epsilon^{-2})$} under additional verifiable assumptions. Inspired by a comment from a referee, we have made an intriguing observation. By using a different proximal center $\hat{\mu}^{k}$ instead of $\mu^k$ in \eqref{eq: sdd-intro}, defined as:
\begin{align}\label{eq: mu_hat}
\hat{\mu}^k := \mu^k + \frac{\rho}{\omega \tau}{h(x^{k+1})},
\end{align}
we find that the dual variable vanishes in all iterations when initialized with zeros. This realization highlights the versatility of the SDD framework, as it not only introduces a novel class of dual updates but also encompasses the classic penalty method when combined with a traditional dual ascent step \eqref{eq: mu_hat}. Consequently, we provide a unified convergence analysis for both SDD and the penalty method, with the complexity results for the latter being novel contributions to the literature.

A natural question then arises: what will happen if we simply perform an \textit{unscaled dual descent} (UDD) update, i.e., 
\begin{align}\label{eq: udd-intro}
	\mu^{k+1} = \mu^k - \varrho \nabla_\mu \mathcal{L}_{\rho} (x^{k+1}, \mu^k) = \mu^k - \varrho h(x^{k+1}),
\end{align}
where $\varrho >0$ is a fixed dual step size. The analysis of UDD presents a main technical challenge in establishing the boundedness of the dual variable to prevent the augmented Lagrangian function from becoming unbounded from below. In this paper, we provide some positive theoretical results and preliminary empirical observations for the UDD update. From a theoretical perspective, we demonstrate that when some regularity condition holds at the primal limit point, regardless of the choice of $\varrho > 0$, the dual sequence has a bounded subsequence and hence the augmented Lagrangian function is lower bounded; as a result, the UDD update finds an $\epsilon$-stationary solution in $\mathcal{O}(\epsilon^{-2})$ iterations. On the empirical side, we observe that UDD converges on a simple consensus problem when the step size $\varrho$ is close to zero. In this scenario, the UDD update \eqref{eq: udd-intro} describes the limiting behavior of the SDD update \eqref{eq: sdd-intro} with $\tau$ converging to $+\infty$ and $\omega$ remaining constant. Essentially, the empirical convergence of UDD could be attributed to the penalty method, where the update of dual variable is relatively negligible.

To our knowledge, references \cite{jian2021qcqp,jian2020monotone} are the only two works that use a related idea of dual descent. The authors consider a special case of \eqref{eq: mb-nlp} with two blocks of variables with continuously differentiable coupling constraints. In each iteration, the proposed algorithm first solves two quadratic programs (QPs), and then uses the solutions to determine the primal and dual descent directions of the augmented Lagrangian function. Assuming boundedness of dual variables, convergence to a stationary point is proved with an iteration complexity of $\mathcal{O}(\epsilon^{-2})$ QP oracles. The proposed dual descent framework in this paper is different from \cite{jian2021qcqp,jian2020monotone} in several nontrivial perspectives. We summarize our contributions in the next subsection.

\subsection{Contributions}
We summarize our contributions as follows. We introduce SDD within the augmented Lagrangian framework to solve nonlinear constrained nonconvex problem \eqref{eq: mb-nlp}. In iteration $k+1$, we obtain $x^{k+1}$ through a Gauss-Seidel sweep of proximal gradient updates, and then update the dual variable $\mu^{k+1}$ via an SDD step. We call the resulting algorithm SDD-ADMM when $p>1$, and SDD-ALM when $p=1$. In contrast to most existing ADMM and ALM works considering only affine constraints (see Section \ref{sec: literature} for a detailed review), the proposed SDD-ADMM and SDD-ALM are able to handle nonlinear smooth coupling constraints of the form $\sum_{i=1}^p h_i(x_i)=0$, and therefore are applicable to a broader class of problems. 

In addition to being able to handle nonlinear constraints, SDD-ADMM ($p>1$) achieves better iteration complexities under a more general setting. Compared to existing multi-block nonconvex ADMM works \cite{gonccalves2017convergence,hong2016convergence,jiang2019structured,melo2017iteration,melo2017jacobi,wang2015global}, we do not impose restrictive assumptions on problem data (see Section \ref{sec: admm lit}), {and we show that SDD-ADMM obtains an $\epsilon$-stationary solution in $\mathcal{O}(\epsilon^{-4})$ iterations, which can be further improved to $\mathcal{O}(\epsilon^{-3})$ or $\mathcal{O}(\epsilon^{-2})$ under suitable conditions.  Our $\mathcal{O}(\epsilon^{-4})$ and $\mathcal{O}(\epsilon^{-3})$ estimates significantly improve the existing $\mathcal{O}(\epsilon^{-6})$ \cite{jiang2019structured} and $\mathcal{O}(\epsilon^{-4})$ \cite{sun2019two} complexities, respectively, and our $\mathcal{O}(\epsilon^{-2})$ estimate complements the above mentioned references.} Moreover, our iteration complexities are measured by first-order oracles, i.e., gradient oracles of $f$ and $h$ and proximal oracles of $g_i$’s, which are in general more tractable than the subproblem oracles considered in \cite{jiang2019structured,sun2019two}.

For SDD-ALM ($p=1$), our iteration complexities slightly improve the best-known results in \cite{lin2019inexact} (without a technical assumption) and  \cite{li2021rate} (with a technical assumption) by getting rid of the logarithmic dependency on $\epsilon^{-1}$. Another feature of SDD-ALM is that the algorithm is single-looped, which might be preferable ove double- or triple-looped ALM and penalty methods \cite{li2021rate, lin2019inexact, Sahin2019alm, xie2021complexity} from an implementation point of view, i.e., the technicality of choosing the inner-loop stopping criteria is avoided. In addition, our convergence analysis and complexity estimates also apply to an interesting single-looped first-order penalty method.

To further understand the behavior of dual descent, we introduce UDD within the augmented Lagrangian framework and name the resulting algorithm UDD-ALM. We first investigate UDD-ALM for weakly convex minimization with affine constraints and show that when a certain regularity condition holds at the primal limit point, UDD-ALM finds an $\epsilon$-stationary solution in $\mathcal{O}(\epsilon^{-2})$ iterations. UDD-ALM is single-looped and our iteration complexity is again measured by first-order oracles of $f$ and $g$. We do not restrict $g$ to be an indicator function of a box or polyhedron, and hence our result complements those in \cite{zhang2020proximal,zhang2020global}. Finally, we extend the analysis of UDD-ALM to handle nonconvex $g$ and nonlinear constraints $h$ by assuming a novel descent oracle of each proximal augmented Lagrangian relaxation over the domain of $g$. We would like to acknowledge that there is still a need for a deeper understanding of the behavior of UDD, particularly concerning the implicit impact of the dual step size on the regularity assumption we imposed on the primal limit point. We do not claim or advocate the superiority of UDD over existing algorithms, but simply share our current theoretical understanding and empirical observations on this counter-intuitive approach. Our hope is that both SDD and UDD can serve as catalysts for inspiring further algorithmic developments that go beyond traditional approaches.

\subsection{Notations}
We denote the set of positive integers up to $p$ by $[p]$, the set of nonnegative integers by $\N$, the set of real numbers by $\R$, the set of extended real line by $\overline{\R}:=\R\cup \{+\infty\}$, and the $n$-dimensional real Euclidean space by $\R^n$. For $x, y\in \R^n$, the inner product of $x$ and $y$ is denoted by $\langle x, y \rangle$, and the Euclidean norm of $x$ is denoted by $\|x\|$; for $A \in \R^{m\times n}$, {$\|A\|$ and $\sigma_{\min}(A)$ denote the largest and smallest singular value of $A$, respectively. For $X \subseteq \R^n$, we use $\delta_X$ to denote the $0/\infty$-indicator function of $X$.}
Denote $\mathrm{dist}(x, x) := \inf_{y\in X} \|y-x\|$. When $x \in \R^n$ has the block-coordinate form $[x_1^\top,\cdots, x_p^\top]^\top$ with $x_i \in \R^{n_i}$ for $i\in[p]$ and $\sum_{i=1}^p n_i = n$, we denote $x_{\leq i} = [x_1^\top, \cdots, x_{i}^\top]^\top$ and $x_{\geq i} = [x_i^\top, \cdots, x_{p}^\top]^\top$ for $i\in [p]$, and similarly for $x_{<i} $ and $x_{>i}$. We use the following notations for $x$ when it is necessary to distinguish variable blocks (specifically when $x$ is an argument of a function): $(x_{\leq i}, x_{> i})$, $(x_{<i}, x_i, x_{>i})$, $(x_{<i}, x_{\geq i})$, {or $(x_{\neq i}, x_i)$}. Finally, we adopt the following notations for complexity analysis. Let $\varepsilon >0$ and $K >0$. We write $K = \mathcal{O}(\varepsilon)$ if $K \leq B\varepsilon$ for some $0< B< +\infty$, and $K = \Theta(\varepsilon)$ if $b\varepsilon \leq K \leq B\varepsilon$ for some $0 <b < B< +\infty$.
\subsection{Organization}
The rest of the paper is organized as follows. Section \ref{sec: literature} reviews related works in ALM and ADMM. In Section \ref{sec: sdd}, we introduce the SDD-ADMM algorithm, present its convergence analysis {as well as an adaptive version}, and discuss its connection with existing algorithms. In Section \ref{sec: udd}, we establish the convergence of UDD-ALM under the setting where $p=1$, $h(x)$ is affine, and $g$ is convex, and further extend the analysis to handle nonconvex $g$ and nonlinear $h$.  {We present some numerical experiments in Section \ref{section: examples} and finally conclude this paper in Section \ref{sec: conclusion}.}

\section{Related Works}\label{sec: literature}
This section reviews the literature of ALM and ADMM.
\subsection{ALM}
 The asymptotic convergence and convergence rate of ALM have been extensively studied for convex programs by \cite{lanMonteiro2016alm,rockafellar1973multiplier,rockafellar1976augmented} and smooth nonlinear programs \cite{andreani2008augmented,bertsekas2014constrained}. In this subsection, we review some recent developments in ALM-based algorithms applied to nonconvex problems of the form:
\begin{align}\label{eq: alm literature}
	\min_{x\in \R^n} \left\{ F(x)~|~ h_E(x) = 0, h_I(x) \leq 0\right\}.
\end{align}
Often the objective $F$ is assumed to admit a composite form $f+g$, where $f$ has Lipschitz gradient and $g$ is a nonsmooth convex function.
\subsubsection{Convex Constraints}
Works \cite{hajinezhad2019perturbed,hong2016decomposing,melo2020almfullstep,melo2020iteration,sun2021algorithms,zeng2021moreau, zhang2020proximal,zhang2020global} consider affine constraints, i.e., $h_E(x) = Ax-b=0$, while inequality constraints $h_I(x)\leq 0$ are not present. For a special case with $g=0$, Hong \cite{hong2016decomposing} proposed a proximal primal-dual algorithm (prox-PDA) that finds an approximate stationary point in $\mathcal{O}(\epsilon^{-2})$ iterations, where $\epsilon>0$ measures both first-order stationarity and feasibility (``$\epsilon$-stationary point" hereafter). When $g$ is a compactly supported convex function, Hajinezhad and Hong \cite{hajinezhad2019perturbed} proposed a perturbed prox-PDA that achieves an iteration complexity of $\mathcal{O}(\epsilon^{-4})$. Zeng et al. \cite{zeng2021moreau} proposed a Moreau Envelope ALM (MEAL) for handling a general weakly convex objective function $F$, which achieves the $\mathcal{O}(\epsilon^{-2})$ iteration complexity. Authors of \cite{sun2021algorithms} proposed two variants of ALM with $\mathcal{O}(\epsilon^{-2})$ iteration estimates when $F$ is a difference-of-convex (DC) function.

In contrast to previously mentioned works where iteration complexities are measured by the number of times a (proximal) augmented Lagrangian relaxation is solved, the following works study iteration complexities in terms of \textit{first-order oracles}, i.e., the number of proximal gradient steps. Melo et al. \cite{melo2020iteration} applied an accelerated composite gradient method (ACG) \cite{beck2009fast} to solve each proximal ALM subproblem and showed that an $\epsilon$-stationary point can be found in $\tilde{\mathcal{O}}(\epsilon^{-3})$\footnote{The notation $\tilde{\mathcal{O}}$ hides logarithmic dependence on $\epsilon^{-1}$.} ACG iterations, which can be further reduced to $\tilde{\mathcal{O}}(\epsilon^{-5/2})$ with mildly stronger assumptions. Later, Melo and Monteiro \cite{melo2020almfullstep} embedded this inner acceleration scheme in a proximal ALM with full dual multiplier update and derived an iteration complexity of $\tilde{\mathcal{O}}(\epsilon^{-3})$ ACG iterations. Zhang and Luo proposed a single-looped proximal ALM and established an $\mathcal{O}(\epsilon^{-2})$ iteration estimate when $g$ is an indicator function of a hypercube \cite{zhang2020proximal} or a polyhedron \cite{zhang2020global}.

Works \cite{kong2020iteration,LiXu2020almv2} study the setting where convex nonlinear constraints $h_I(x)\leq 0$ are explicitly present. When $F$ is weakly convex and $h_E(x)$ is affine, Li and Xu \cite{LiXu2020almv2} combined an inexact ALM and a quadratic penalty method, and they showed that an $\epsilon$-stationary point can be found in $\tilde{\mathcal{O}}(\epsilon^{-5/2})$ adaptive accelerated proximal gradient (APG) steps. Kong et al. \cite{kong2020iteration} studied a more general setting: convex nonlinear constraints take the form $-h_I(x) \in \mathcal{K}$ where $\mathcal{K}$ is a closed convex cone. Under the same inner acceleration scheme as in \cite{melo2020almfullstep}, they showed that the proposed proximal ALM, named NL-IAPIAL, finds an $\epsilon$-stationary solution in $\tilde{\mathcal{O}}(\epsilon^{-3})$ ACG iterations.

\subsubsection{Nonconvex Constraints}
Works \cite{li2021rate,lin2019inexact, Sahin2019alm, xie2021complexity} consider nonlinear and nonconvex constraints. Sahin et al. \cite{Sahin2019alm} studied problem \eqref{eq: alm literature} with only equality constraints $h_E(x)=0$, and proposed a double-looped inexact ALM (iALM), where the augmented Lagrangian relaxation is solved by the accelerated gradient method in \cite{ghadimi2016accelerated}, and then the dual variable is updated with a small step size, which ensures that the sequence of dual variables is uniformly bounded. Assuming a technical \textit{regularity condition} that provides a convenient workaround to control primal infeasibility using dual infeasibility, the proposed iALM achieves an $\tilde{\mathcal{O}}(\epsilon^{-4})$ iteration complexity\footnote{The complexity presented in \cite{Sahin2019alm} is claimed to be wrong and corrected to $\tilde{\mathcal{O}}(\epsilon^{-4})$ by \cite{li2021rate}.}. Assuming the same regularity condition, Li et al. \cite{li2021rate} later improved the iteration complexity to $\tilde{\mathcal{O}}(\epsilon^{-3})$, which is achieved through a triple-looped iALM.

For problems where the aforementioned regularity condition is not satisfied, the inexact proximal point penalty (iPPP) method proposed by Lin et al. \cite{lin2019inexact} finds an $\epsilon$-stationary solution in $\tilde{\mathcal{O}}(\epsilon^{-4})$ adaptive APG steps under the requirement that the initial point is feasible. Xie and Wright \cite{xie2021complexity} proposed a proximal ALM that finds an $\epsilon$-stationary solution in $\mathcal{O}(\epsilon^{-5.5})$ Newton-CG iterations.
	
\subsection{ADMM}\label{sec: admm lit}
The alternating direction method of multipliers (ADMM) was proposed in the mid-1970s \cite{gabay1976dual,glowinski1975approximation}, while the underlying idea has deep roots in maximal monotone operator theory \cite{eckstein1992douglas} and numerical methods for partial differential equations \cite{douglas1956numerical,peaceman1955numerical}. Commonly regarded as a variant of ALM, ADMM solves the augmented Lagrangian relaxation by alternately optimizing through blocks of variables and in this way subproblems become decoupled. Such a feature has gained ADMM considerable attentions in distributed optimization \cite{boyd2011distributed,makhdoumi_broadcast-based_2014,shi_linear_2014}. The convergence of ADMM with two block variables is proved for convex optimization problems \cite{eckstein1992douglas,gabay2024applications,gabay1976dual,glowinski1975approximation} and a convergence rate of $\mathcal{O}(1/K)$ is established \cite{he20121,he2015non,monteiro2013iteration}, where $K$ is the iteration index. See also \cite{chen2016direct,davis2017three,gonccalves2016extending,hong2017linear,lin2018global} on convex multi-block ADMM.

In recent years, researchers have extended the ADMM framework to solve nonconvex multi-block problem \eqref{eq: mb-nlp}, where $h_i(x_i) = A_ix_i$ is affine for all $i\in [p]$ \cite{gonccalves2017convergence,hong2016convergence,jiang2019structured,melo2017iteration,melo2017jacobi,wang2015global}. 
The asymptotical convergence and an iteration complexity of $\mathcal{O}(\epsilon^{-2})$ are established based on two crucial conditions on the problem data: (a) $g_p = 0$ and (b) the column space of $A_p$ contains the column space of the concatenated matrix $[A_1, \cdots, A_{p-1}]$.Condition (a) provides a way to control dual iterates by primal iterates, while condition (b) is required for ADMM to locate a feasible solution in the limit. See also \cite[Table 1]{sun2019two} for a summary of other assumptions. These two assumptions are almost necessary for the convergence of nonconvex ADMM. Namely, when either one of the two assumptions fails to hold, divergent examples have been found. 

There are also several works investigating the convergence of nonconvex ADMM without conditions (a) and (b). In particular, Jiang et al. \cite{jiang2019structured} proposed to run ADMM on a penalty relaxation of \eqref{eq: mb-nlp}. K. Sun and XA. Sun \cite{sun2019two,sun2021two} proposed a two-level framework that embeds a structured three-block ADMM inside an ALM. For weakly convex minimization over affine constraints, works \cite{zeng2021moreau,zhang2020proximal} demonstrate two ADMM variants that do not require assumptions (a) or (b).

Zhu et al. \cite{zhu2020first} considered nonlinear coupling constraints of the form $h(x_1) + Bx_2=0$. Assuming condition (a) and a straightforward extension of condition (b), i.e., the range of the nonlinear mapping $h$ belongs to the column space of the matrix $B$, the authors derived the $\mathcal{O}(\epsilon^{-2})$ iteration complexity.

\section{Scaled Dual Descent ADMM}\label{sec: sdd}

\subsection{Assumptions and Stationarity}
In this subsection, we formally state our assumptions on the problem data and define stationarity for problem \eqref{eq: mb-nlp}.
\begin{assumption}~\label{assumption: mb problem data}
	\begin{enumerate}
		\item For $i\in [p]$, the function $g_i:\R^{n_i} \rightarrow \overline{\R}$ is proper and lower-semicontinuous with effective domain denoted by $X_i = \mathrm{dom}~g_i := \{x\in \R^n~|~g_i(x) < +\infty\}.$
		% \begin{align}
		% 	X_i = \mathrm{dom}~g_i := \{x\in \R^n~|~g_i(x) < +\infty\}.
		% \end{align}
		Moreover, the proximal oracle of $g_i$ is available, i.e., given $z_i \in \R^{n_i}$ and a sufficiently large constant $\eta >0$, we can solve the following problem
		\begin{align}\label{eq: prox-oracle-g}
			\min_{x_i\in \R^{n_i}} g_i(x_i) + \frac{\eta}{2}\|x_i-z_i\|^2.
		\end{align}
		Denote $g(x) = \sum_{i=1}^p g_i(x_i)$ and $X = \prod_{i=1}^p X_i$.
		\item The function $f:\R^n \rightarrow \R$ has Lipschitz gradient over $X$, i.e., there exists a positive constant $L_{f}$ such that $\|\nabla f(x)- \nabla f(z) \|	\leq L_{f} \|x-z\|$
		% \begin{align}
		% 	\|\nabla f(x)- \nabla f(z) \|	\leq L_{f} \|x-z\|
		% \end{align}
 		for any $x, z\in X$. 
		\item The mapping $h:\R^n \rightarrow \R^m$ is given by $h(x) = \sum_{i=1}^p h_i(x_i)$, where $h_i:\R^{n_i}\rightarrow \R^m$ is continuously differentiable over $X_i$, and there exist positive constants $M_{h_i}$, {$K_{h_i}$}, $J_{h_i}$, and $L_{h_i}$ such that for all $i\in [p]$ and $x_i, z_i\in X_i$, we have 
		\begin{subequations}\label{eq: mp h const}
		\begin{alignat}{2}
			 & \max_{x_i\in X_i}\|h_i(x_i)\|\leq  M_{h_i}, 	 \quad  && \|h_i(x_i)-h_i(z_i)\| \leq  K_{h_i}\|x_i-z_i\|,   \\
			 & \max_{x_i\in X_i}\|\nabla h_i(x_i)\| \leq  J_{h_i},  && \|\nabla h_i(x_i)-\nabla h_i(z_i)\| \leq   L_{h_i}\|x_i-z_i\|, 
		\end{alignat}
		\end{subequations}
		where $\nabla h_i(x_i) = [\nabla h_{i1}(x_i), \cdots, \nabla h_{im}(x_i)]\in \R^{n_i \times m}$, and $\|\cdot\|$ denotes the Euclidean norm for vectors or the induced norm for matrices.
		\item {The following constants are finite:
		\begin{subequations}\label{eq: upper and lower bound on P}
		\begin{align}
			\overline{\mathcal{P}} :=& \sup_{x\in X} f(x) + \sum_{i=1}^p g_i(x_i) < +\infty, \label{eq: p_ub}\ \text{and}\\ 
			\underline{\mathcal{P}}:=& \inf_{x\in X} f(x) + \sum_{i=1}^p g_i(x_i)  > -\infty.	\label{eq: p_lb}
		\end{align}	
		\end{subequations}
		}
%		\item Problem \eqref{eq: mb-nlp} is feasible, i.e., $X\cap\{x\in\R^n~|~h(x)=0\}\neq \emptyset$, and 
%		\begin{align}\label{eq: p_lb}
%			\underline{\mathcal{P}} :=  \min_{x\in X} f(x) + \sum_{i=1}^p g_i(x_i)  > -\infty.	
%		\end{align}
	\end{enumerate}
\end{assumption}
For ease of later presentation, let
\begin{align}\label{eq: mb-h-constants}
	M_h := \sum_{i=1}^p M_{h_i} , \ K_h :=\max_{i\in [p]} \{K_{h_i}\}, \ J_h :=  \max_{i\in [p]} \{J_{h_i}\}, \ \text{and} \ L_h :=\max_{i\in [p]} \{L_{h_i}\}.
\end{align}
%\begin{assumption}~\label{assumption: feasible initial pt}
%	A feasible {point} $x^0 \in X\cap\{x\in\R^n~|~h(x)=0\}$ is known.
%\end{assumption}
\begin{remark} 
	We give some remarks regarding the above assumptions.
\begin{enumerate}
	\item We allow $g_i$ to be nonconvex as long as its proximal oracle is available. 
	 Without loss of generality, we may assume any $\eta\geq L_f$ suffices to carry out the minimization in \eqref{eq: prox-oracle-g} exactly.
	 \item {Any $g_i$ of the form $\delta_{X_i} + \tilde{g}_{i}$ where $X_i$ is a compact set and $\tilde{g}_{i}$ is continuous over $X_i$ ensures that constants in \eqref{eq: mp h const} and \eqref{eq: upper and lower bound on P} are well-defined. However, we do not explicitly require the compactness of $X_i$'s because many nonconvex $g_i$'s such as SCAD, MCP, and Capped-$\ell_1$ are defined over $\R^{n_i}$ and uniformly bounded from above, and nonlinear mappings including the sine, cosine, arctangent, and sigmoid functions  can ensure \eqref{eq: mp h const} without $X_i$'s being compact. In addition, it is possible to further relax condition \eqref{eq: p_ub}; see Remark \ref{remark: feasibility}.}
	\item Suppose each $h_{ij}:\R^n \rightarrow \R$ satisfies that $\|\nabla h_{ij}(x_i)-\nabla h_{ij}(z_i)\|\leq L_{h_{ij}}\|x_i-z_i\|$ and $\|\nabla h_{ij}(x_i)\| \leq J_{h_{ij}}$ for all $x_i, z_i\in X_i$, then we can obtain the following estimates: $L_{h_i} = \sqrt{m}\max_{j\in [m]} L_{h_{ij}}$ and $J_{h_i} = K_{h_i} = \sqrt{m} \max_{j\in [m]} J_{h_{ij}}$.
\end{enumerate}
\end{remark}

We define approximate stationarity as follows. 
\begin{definition}[Approximate Stationary Point]
	Given $\epsilon>0$, we say $x\in X$ is an $\epsilon$-stationary point of problem \eqref{eq: mb-nlp} if there exists $\lambda \in \R^m$ such that
	\begin{align*}
		 \max_{i\in [p]} \Big \{ \mathrm{dist}\Big( -\nabla_i f(x)- \nabla h_i(x_i) \lambda , \partial g_i(x_i) \Big) \Big\} \leq \epsilon, \ \text{and} \ \|h(x)\|\leq \epsilon,
	\end{align*}
	where $\partial g_i(x_i)$ denotes the general subdifferential of $g_i$ at $x_i \in X_i$ \cite[Definition 8.3]{rockafellar2009variational}.
\end{definition}

\subsection{The Proposed Algorithm}
Recall the augmented Lagrangian function in \eqref{eq: AL}. We also separate out the smooth components in $\mathcal{L}_{\rho}(x, \mu)$ as
\begin{align}\label{eq: smooth AL}
	\mathcal{K}_{\rho}(x, \mu) :=  \mathcal{L}_{\rho}(x, \mu)- \sum_{i=1}^p g_i(x_i) = f(x)  + \langle \mu, h(x)\rangle + \frac{\rho}{2}\|h(x)\|^2.
\end{align}
Notice that for $i\in [p]$, $\nabla_{x_i} \mathcal{K}_{\rho}(x, \mu)	= \nabla_i f(x) + \nabla h_i(x_i) (\mu + \rho h(x))$. It can be verified that $\nabla_{x_i} \mathcal{K}_{\rho}(x, \mu)$ is Lipschitz. The calculation is straightforward and hence omitted.
\begin{lemma}\label{eq: lip-grad-k}
	For any $i\in [p]$, $x_i, z_i \in X_i$, and fixed $x_j \in X_j$ with $j\neq i$, we have	
	\begin{align*}
		\|\nabla_{x_i} \mathcal{K}_{\rho}(x_{<i}, x_i, x_{>i}, \mu) - \nabla_{x_i} \mathcal{K}_{\rho}(x_{<i}, z_i, x_{>i}, \mu) \| \leq  \mathrm{Lip}(\mu, \rho) \|x_i-z_i\|,
	\end{align*}
	where
	\begin{align}\label{eq: admm-lip}
	\mathrm{Lip}(\mu, \rho):= L_f + \|\mu\| L_h + \rho (J_h K_h + M_h L_h) .
\end{align}
\end{lemma}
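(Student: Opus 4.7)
The plan is to decompose the partial gradient difference into three natural pieces corresponding to the three terms in $\mathcal{K}_{\rho}$, and estimate each piece using the constants in Assumption \ref{assumption: mb problem data}. Writing $x = (x_{<i}, x_i, x_{>i})$ and $z = (x_{<i}, z_i, x_{>i})$, I start from
\begin{align*}
\nabla_{x_i} \mathcal{K}_{\rho}(x, \mu) - \nabla_{x_i}\mathcal{K}_{\rho}(z, \mu) &= \bigl[\nabla_i f(x) - \nabla_i f(z)\bigr] + \bigl[\nabla h_i(x_i) - \nabla h_i(z_i)\bigr]\mu \\
&\quad + \rho\bigl[\nabla h_i(x_i) h(x) - \nabla h_i(z_i) h(z)\bigr],
\end{align*}
and apply the triangle inequality to bound the norm of the right-hand side.

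For the first bracket, the Lipschitz gradient condition on $f$ (part 2 of the assumption) yields the bound $L_f\|x_i-z_i\|$, since $x$ and $z$ differ only in the $i$-th block. For the second bracket, I use the Lipschitz continuity of $\nabla h_i$ to bound it by $L_{h_i}\|\mu\|\|x_i-z_i\| \leq \|\mu\|L_h\|x_i-z_i\|$.

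The third bracket is where the two contributions $J_h K_h$ and $M_h L_h$ appear. I would insert the mixed term $\nabla h_i(x_i) h(z)$ and split:
\begin{align*}
\nabla h_i(x_i) h(x) - \nabla h_i(z_i) h(z) = \nabla h_i(x_i)\bigl[h(x)-h(z)\bigr] + \bigl[\nabla h_i(x_i) - \nabla h_i(z_i)\bigr] h(z).
\end{align*}
Since $h(x)-h(z) = h_i(x_i)-h_i(z_i)$, its norm is at most $K_{h_i}\|x_i-z_i\| \le K_h\|x_i-z_i\|$, and $\|\nabla h_i(x_i)\|\le J_{h_i}\le J_h$, so the first piece contributes $J_h K_h\|x_i-z_i\|$. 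For the second piece, $\|h(z)\|\leq \sum_{j=1}^p M_{h_j} = M_h$ and $\|\nabla h_i(x_i) - \nabla h_i(z_i)\|\leq L_h\|x_i-z_i\|$, contributing $M_h L_h \|x_i-z_i\|$. Multiplying by $\rho$ and combining all three estimates yields the stated constant $\mathrm{Lip}(\mu,\rho) = L_f + \|\mu\|L_h + \rho(J_h K_h + M_h L_h)$.

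There is no substantive obstacle here: the result is a routine calculation whose only subtlety is choosing the right mixed-term insertion in the $\rho$-dependent piece so that one factor in each resulting product is bounded uniformly (via $J_h$ or $M_h$) while the other supplies the Lipschitz factor in $\|x_i-z_i\|$ (via $K_h$ or $L_h$). Care must also be taken that the points at which the various constants are invoked lie in $X$ or in the appropriate $X_j$'s, which is automatic because only the $i$-th block is perturbed and the constants in \eqref{eq: mp h const} are valid over all of $X_i$.
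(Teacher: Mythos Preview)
Your proof is correct and follows essentially the same route as the paper's: both decompose the gradient difference into the $f$-term, the $\mu$-term, and the $\rho$-term, and both handle the $\rho$-term by inserting a mixed product to separate a bounded factor from a Lipschitz factor. The only cosmetic difference is that the paper first splits $h$ into $h_i$ and $\sum_{j\neq i} h_j$ before inserting the mixed term $\nabla h_i(x_i)h_i(z_i)$, whereas you insert $\nabla h_i(x_i)h(z)$ directly; the resulting bounds coincide.
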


Lemma \ref{eq: lip-grad-k} allows us to update each $x_i$ via a single proximal gradient step. The scaled dual descent ADMM  (SDD-ADMM) is presented in Algorithm \ref{alg: SDD-ADMM}.
\begin{algorithm}[H]
	\caption{: \texttt{SDD-ADMM}} \label{alg: SDD-ADMM}
	\begin{algorithmic}[1]
		\State \textbf{Input} {$x^0 \in X, \; \rho>0, \; \omega\ge 4, \; \theta>1, \; \tau\ge 0$;}\label{alg: admm-init} 
		\State \textbf{initialize} {$\mu^0 = 0 \in \R^m$;}
		\For{$k=0,1,2,\cdots$}
		\For{$i=1,2,\cdots, p$}
		\State update $x_i^{k+1}$ through a proximal gradient step:
		\begin{align}\label{eq: sdd-admm-x}
			x^{k+1}_{i} = \argmin_{x_i\in \R^{n_i}} g_i(x_i) + \langle \nabla_{x_i} \mathcal{K}_{\rho}(x^{k+1}_{<i},x^k_{\geq i}, \mu^k), x_i-x_i^k \rangle + \frac{\theta \mathrm{Lip}(\mu^k,\rho)}{2}\|x_i-x_i^k\|^2;
		\end{align}
		\EndFor
		\State update $\mu^{k+1}$ by
		\begin{align}\label{eq: sdd-admm-mu}
			\mu^{k+1}  = & {\argmin_{\mu \in \R^m} \mathcal{P}(x^{k+1}, \mu) + \frac{\tau \omega}{2\rho}\|\mu - \mu^k\|^2 =}  \frac{1}{1+\tau} \left( \tau \mu^k - \omega^{-1}  \rho h(x^{k+1}) \right); 
		 \end{align}
		\EndFor
	\end{algorithmic}
\end{algorithm}
\begin{remark}
	We {give} some remarks on Algorithm \ref{alg: SDD-ADMM}.
\begin{enumerate}
	\item {We update $x^{k+1}_i$'s through $p$ proximal gradient updates, {and then obtain $\mu^{k+1}$ by minimizing $\mathcal{P}(x^{k+1}, \mu)$ plus a proximal term} as in \eqref{eq: sdd-admm-mu}. Due to the strong convexity of $\mathcal{P}$ in $\mu$, $\tau$ is allowed to be $0$.}
	\item {If we perturb the proximal term by a specific \textit{linear} term to cancel out the inner product in $\mathcal{P}(x^{k+1}, \mu)$, and update $\mu^{k+1}$ as
%	 \begin{align} \label{eq: penalty-mu}
%	 	\mu^{k+1} = 
%	 	\begin{cases}
%	 		\argmin_{\mu \in \R^m} \mathcal{P}(x^{k+1}, \mu) +  \frac{\tau \omega}{2\rho}\|\mu - (\mu^k + \frac{\rho}{\omega \tau}h(x^{k+1}))\|^2 &\text{~if~} \tau > 0 ,\\
%	 		\argmin_{\mu \in \R^m} \mathcal{P}(x^{k+1}, \mu) + \langle -h(x^{k+1}), \mu \rangle &\text{~if~} \tau = 0,
%	 	\end{cases}
%%	 	\bar{\mu}^{k} = \mu^k + \frac{\rho}{\tau \omega}h(x^{k+1}) 
%	 \end{align}
	\begin{align}\label{eq: penalty-mu}
		\mu^{k+1} = \argmin_{\mu \in \R^m} \mathcal{P}(x^{k+1}, \mu) +  \frac{\tau \omega}{2\rho}\|\mu - \mu^k\|^2 + \langle -h(x^{k+1}), \mu \rangle, 
	\end{align}
	then, since we initialize with $\mu^0 = 0$, \eqref{eq: penalty-mu} recovers the penalty method, i.e., $\mu^{k+1} = 0$ for all $k\in \N$. When $\tau >0$, 
	\eqref{eq: penalty-mu} is also equivalent to the SDD update with a different proximal center: 
	\begin{align}\label{eq: penalty-mu-prox}
		\mu^{k+1} = \argmin_{\mu \in \R^m} \mathcal{P}(x^{k+1}, \mu) +  \frac{\tau \omega}{2\rho}\|\mu - (\mu^k + \frac{\rho}{\omega \tau}h(x^{k+1}))\|^2.
	\end{align}
	It is interesting that the new proximal center is a dual ascent iterate, which recovers the penalty method in combination with the SDD framework.
	} 
%	\todo{[TODO: check this single-looped first-order penalty method has not been studied and has better complexity results over existing ones.]}
	\item The lower bound of the parameter $\omega$ is chosen to be 4 mainly for the ease of analysis, e.g., in Lemma \ref{lemma: admm-bound-primal-res}. Other values are possible as well.
	\end{enumerate}
\end{remark}

When $p=1$, we call the algorithm SDD-ALM. One key observation is that, although $x_i$'s are updated in a Gauss-Seidel fashion in Algorithm \ref{alg: SDD-ADMM}, we can also apply a Jacobi-type update, i.e., replace $\nabla_{x_i} \mathcal{K}_{\rho}(x^{k+1}_{<i},x^k_{\geq i}, \mu^k)$ by $\nabla_{x_i} \mathcal{K}_{\rho}(x^k, \mu^k)$ in \eqref{eq: sdd-admm-x} and then solve the $p$ subproblems in parallel. This is a special case when we treat $x = [x_1^\top, \cdots, x_p^\top]^\top$ as a single block variable and apply SDD-ALM, where decomposition is achieved within this single block update and assembles a Jacobi sweep.
Such a feature might be favored in a distributed optimization setting. We present the convergence of SDD-ADMM in the form of  Algorithm \ref{alg: SDD-ADMM} in the next subsection.

\subsection{Convergence Analysis}
In this subsection, {we analyze the convergence of Algorithm \ref{alg: SDD-ADMM} under dual updates \eqref{eq: sdd-admm-mu} and \eqref{eq: penalty-mu} in a unified framework.} We first show that when the initial point is almost feasible, SDD-ADMM finds an $\epsilon$-stationary solution in $\mathcal{O}(\epsilon^{-4})$ iterations. Then under an additional assumption regarding $h$ and $g$, we can further improve the complexity by one or even two orders of magnitude to $\mathcal{O}(\epsilon^{-3})$ and $\mathcal{O}(\epsilon^{-2})$, respectively.

% {While our analysis covers both the SDD \eqref{eq: sdd-admm-mu} and the penalty method \eqref{eq: penalty-mu}, technical claims below are stated in the context of \eqref{eq: sdd-admm-mu}, where the dual variable $\mu$ is presented. This is because our algorithm is motivated by minimizing the augmented Lagrangian function in both primal and dual variables in the first place. In each proof, we accommodate to \eqref{eq: penalty-mu} when needed.}
{
Although our analysis encompasses both the SDD update \eqref{eq: sdd-admm-mu} and the penalty method update \eqref{eq: penalty-mu}, the technical claims presented below are stated within the context of the SDD update \eqref{eq: sdd-admm-mu}, which explicitly involves the dual variable $\mu$. This choice is motivated by our initial goal of minimizing the augmented Lagrangian function in both the primal and dual variables. However, throughout each proof, we accommodate the analysis to incorporate the penalty method update \eqref{eq: penalty-mu} whenever necessary. By adopting this approach, we ensure that our results are applicable to both the SDD and the penalty method, providing a comprehensive understanding of their convergence properties.}
\subsubsection{An $\mathcal{O}(\epsilon^{-4})$ Iteration Complexity} 
Due to that fact that $\nabla f$ is Lipschitz, we have $\|\nabla_i f(x_{<i}, x_i, x_{>i}) - \nabla_i f(x_{<i}, z_i, x_{>i})\| \leq L_f\|x_i-z_i\|$ for any $i\in [p]$, $x_i, z_i \in X_i$, and fixed $x_j$ for $j\neq i$;
%\begin{align*}
%	\|\nabla_i f(x_{<i}, x_i, x_{>i}) - \nabla_i f(x_{<i}, z_i, x_{>i})\| \leq L_f\|x_i-z_i\|.
%\end{align*}
this will be invoked several times in the analysis. We first show that the sequence $\{\mathcal{P}(x^k, \mu^k)\}_{k\in \N}$ is non-increasing.
\begin{lemma}[One-step Progress of SDD-ADMM]\label{lemma: admm-one-step-descent}
	Suppose Assumption \ref{assumption: mb problem data} holds. For all $k\in \N$, 
	\begin{align*}
		& 	\mathcal{P}(x^k, \mu^k) - \mathcal{P}(x^{k+1}, \mu^{k+1}) \\
		\geq & \left(\frac{\theta-1}{2}\right) \mathrm{Lip}(\mu^k, \rho) \sum_{i=1}^p \|x^{k+1}_i - x_i^{k}\|^2 +  \left(\tau + \frac{1}{2}\right)\frac{\omega}{\rho} \|\mu^{k+1}-\mu^k\|^2.
	\end{align*}
\end{lemma}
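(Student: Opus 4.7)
The plan is to split the potential difference into a primal part (at fixed dual) and a dual part (at fixed primal), bound each separately, and add. Concretely, write
\begin{align*}
\mathcal{P}(x^k,\mu^k) - \mathcal{P}(x^{k+1},\mu^{k+1})
= \underbrace{\bigl[\mathcal{L}_\rho(x^k,\mu^k) - \mathcal{L}_\rho(x^{k+1},\mu^k)\bigr]}_{\text{primal part}}
+ \underbrace{\bigl[\mathcal{L}_\rho(x^{k+1},\mu^k) - \mathcal{L}_\rho(x^{k+1},\mu^{k+1})\bigr] + \tfrac{\omega}{2\rho}\bigl(\|\mu^k\|^2 - \|\mu^{k+1}\|^2\bigr)}_{\text{dual part}}.
\end{align*}

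For the primal part, I would fix $\mu^k$ and analyze block $i$ by applying the descent lemma to the map $x_i \mapsto \mathcal{K}_\rho(x^{k+1}_{<i}, x_i, x^k_{>i}, \mu^k)$, whose Lipschitz modulus is $\mathrm{Lip}(\mu^k,\rho)$ by Lemma~\ref{eq: lip-grad-k}. Combined with the first-order optimality of $x_i^{k+1}$ against the test point $x_i^k$ in the proximal subproblem \eqref{eq: sdd-admm-x} (whose quadratic coefficient is $\theta\,\mathrm{Lip}(\mu^k,\rho)/2$), this yields
\begin{align*}
\mathcal{L}_\rho(x^{k+1}_{\le i}, x^k_{>i}, \mu^k) - \mathcal{L}_\rho(x^{k+1}_{<i}, x^k_{\ge i}, \mu^k) \leq -\tfrac{\theta-1}{2}\,\mathrm{Lip}(\mu^k,\rho)\,\|x_i^{k+1}-x_i^k\|^2.
\end{align*}
Telescoping over $i=1,\dots,p$ recovers the primal term in the claim.

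For the dual part, the key algebraic step is to rewrite the SDD update \eqref{eq: sdd-admm-mu} as
\begin{align*}
\omega\rho^{-1}\,h(x^{k+1}) = \tau(\mu^k - \mu^{k+1}) - \mu^{k+1},
\end{align*}
so that $\langle \mu^k-\mu^{k+1}, h(x^{k+1})\rangle = \tfrac{\omega}{\rho}\bigl[\tau\|\mu^{k+1}-\mu^k\|^2 + \langle \mu^{k+1}-\mu^k,\mu^{k+1}\rangle\bigr]$. Pairing this with the identity $\|\mu^k\|^2-\|\mu^{k+1}\|^2 = \|\mu^{k+1}-\mu^k\|^2 - 2\langle \mu^{k+1}-\mu^k,\mu^{k+1}\rangle$, the cross terms $\pm\tfrac{\omega}{\rho}\langle \mu^{k+1}-\mu^k,\mu^{k+1}\rangle$ cancel exactly, leaving
\begin{align*}
\bigl[\mathcal{L}_\rho(x^{k+1},\mu^k) - \mathcal{L}_\rho(x^{k+1},\mu^{k+1})\bigr] + \tfrac{\omega}{2\rho}\bigl(\|\mu^k\|^2 - \|\mu^{k+1}\|^2\bigr) = \bigl(\tau + \tfrac{1}{2}\bigr)\tfrac{\omega}{\rho}\|\mu^{k+1}-\mu^k\|^2.
\end{align*}

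The primal part is routine proximal-gradient bookkeeping, so I expect the main obstacle to be the dual part: the specific scaling factor $\tau/(1+\tau)$ and step size $\tau^{-1}\omega^{-1}\rho$ in \eqref{eq: sdd-admm-mu} are precisely what is needed for the cross terms to cancel, and the role of the auxiliary $\tfrac{\omega}{2\rho}\|\mu\|^2$ in $\mathcal{P}$ is to absorb the ascent term $\langle \mu^k-\mu^{k+1},h(x^{k+1})\rangle$, which would otherwise have indefinite sign under dual descent. Once both parts are in hand, adding them gives the stated inequality.
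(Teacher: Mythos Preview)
Your proposal is correct, and the primal part is handled exactly as in the paper. For the dual part you proceed by direct algebraic manipulation of the SDD update, whereas the paper takes a more conceptual shortcut: it observes that $\mu^{k+1} = \argmin_{\mu}\, \mathcal{P}(x^{k+1},\mu) + \tfrac{\tau\omega}{2\rho}\|\mu-\mu^k\|^2$, notes that this objective is $\tfrac{(1+\tau)\omega}{\rho}$-strongly convex in $\mu$, and reads off the descent bound from strong convexity. Since the objective is quadratic the two routes are literally the same computation in different clothing; your approach makes the cancellation of cross terms explicit, while the paper's phrasing gives the cleaner interpretation of the SDD step as a proximal point update on $\mathcal{P}$ (which also reappears in Section~\ref{sec: connection}). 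One small slip: your displayed identity should read $\omega^{-1}\rho\, h(x^{k+1}) = \tau(\mu^k-\mu^{k+1}) - \mu^{k+1}$, not $\omega\rho^{-1}$; the line that follows it already uses the correct factor $\tfrac{\omega}{\rho}$, so the error is only in that one display.
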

\begin{proof}
	We first establish the descent in $x$. Let $i\in [p]$, then it holds that
	\begin{align}
		& \mathcal{P}(x^{k+1}_{\leq i},x^k_{>i}, \mu^k) = \sum_{j \leq i} g_j(x^{k+1}_j)+  \sum_{j> i} g_j(x^{k}_j) + \mathcal{K}_{\rho}(x^{k+1}_{\leq i},x^k_{>i}, \mu^k) + \frac{\omega}{2\rho}\|\mu^k\|^2 \notag \\
		\leq  & \sum_{j \leq i} g_j(x^{k+1}_j)+  \sum_{j> i} g_j(x^{k}_j)  +  \mathcal{K}_{\rho}(x^{k+1}_{< i}, x^k_{\geq i}, \mu^k) + \langle \nabla_{x_i} \mathcal{K}_{\rho}(x^{k+1}_{< i}, x^k_{\geq i}, \mu^k), x^{k+1}_i-x_i^k\rangle \notag \\
		     &  + \frac{\mathrm{Lip}(\mu^k,\rho)}{2}\|x^{k+1}_i-x^k_i\|^2 +\frac{\omega}{2\rho} \|\mu^k\|^2 \notag  \\
		 \leq  & \sum_{j < i} g_j(x^{k+1}_j)+  \sum_{j\geq i} g_j(x^{k}_j) +   \mathcal{K}_{\rho}(x^{k+1}_{< i}, x^k_{\geq i}, \mu^k) +\frac{\omega}{2\rho}\|\mu^k\|^2 \notag  \\
		      & - \Big(\frac{\theta-1}{2}\Big)\mathrm{Lip}(\mu^k,\rho) \|x^{k+1}_i-x^k_i\|^2 \notag \\
		 = & \mathcal{P}(x^{k+1}_{< i},x^k_{\geq i}, \mu^k)- \left(\frac{\theta-1}{2}\right)\mathrm{Lip}(\mu^k,\rho) \|x^{k+1}_i-x^k_i\|^2, \notag 
	\end{align}
	where the first inequality is due to $\nabla_{x_i} \mathcal{K}_{\rho}(x, \mu^k)$ being Lipschitz and the second inequality is due to the optimality of $x_i^{k+1}$ in \eqref{eq: sdd-admm-x}. Summing the above inequality from $i=1$ to $p$, we have
	\begin{align}\label{eq: sdd-admm-descent-x}
			\mathcal{P}(x^k, \mu^k) - \mathcal{P}(x^{k+1}, \mu^{k}) \geq \left(\frac{\theta-1}{2}\right) \mathrm{Lip}(\mu^k, \rho) \sum_{i=1}^p \|x^{k+1}_i - x_i^{k}\|^2.	
	\end{align}
	Next we derive the descent in $\mu$. {The strong convexity of the objective in \eqref{eq: sdd-admm-mu} implies}
	\begin{align}\label{eq: sdd-admm-descent-mu}
		\mathcal{P}(x^{k+1}, \mu^{k}) - \mathcal{P}(x^{k+1}, \mu^{k+1}) \geq \left(\tau + \frac{1}{2}\right)\frac{\omega}{\rho}\|\mu^{k+1}-\mu^k\|^2.
	\end{align}
	{In view of \eqref{eq: penalty-mu}, the above inequality holds as well since $\mu^{k}=\mu^{k+1} = 0$.} Combining \eqref{eq: sdd-admm-descent-x} and \eqref{eq: sdd-admm-descent-mu} proves the lemma.
\end{proof}
\begin{remark}
	We assume that the proximal mapping \eqref{eq: prox-oracle-g}  of $g_i$ can be carried out exactly, which can be satisfied for many nonconvex functions such as SCAD, MCP, Capped-$\ell_1$, and indicator functions of sphere or annulus constraints. This assumption is mainly used to establish the descent property of $\mathcal{P}(x^{k+1}_{< i},\cdot, x^k_{>i}, \mu^k)$, whereas the global optimality of $x^{k+1}_i$ in \eqref{eq: sdd-admm-x} is not necessary. As an alternative, we may directly assume a descent oracle on $g_i$: we can find a stationary point $x_i^{k+1}$ of \eqref{eq: sdd-admm-x} such that 
	$\mathcal{P}(x^{k+1}_{< i},x^{k+1}_i ,x^k_{>i}, \mu^k)  \leq \mathcal{P}(x^{k+1}_{< i},x^{k}_i ,x^k_{>i}, \mu^k) -  \nu \|x^{k+1}_i-x^k_i\|^2 $
	for some $\nu > 0$. See also Assumption \ref{assumption: descent oracle}. This is in general more realistic when $g_i$ is highly complicated and reasonable if some nonconvex solver can be warm-started. 
\end{remark}

Next we show that the sequence $\{\mathcal{P}(x^k, \mu^k)\}_{k\in \N}$ is bounded from below; consequently, we can further control $\|h(x^k)\|$ and $\|\mu^k\|$. To this end, we require the infeasibility of the initial point $x^0$ to be controlled in the following sense. 
\begin{assumption}~\label{assumption: feasible initial pt}
	There exists a constant $C\geq 0$ such that for any finite $\alpha >0$, we can find an initial point $x^0\in X$ such that $\|h(x^0)\|^2 \leq \frac{C}{\alpha}$.
\end{assumption}
\begin{remark}\label{remark: feasibility}
Assumption \ref{assumption: feasible initial pt} is a slight relaxation of the requirement that $h(x^0) = 0$. As we will see in Theorem \ref{thm: SDD-ADMM}, in order to find an $\epsilon$-stationary solution, we will need to satisfy Assumption \ref{assumption: feasible initial pt} with  $\alpha = \rho = \Theta(\epsilon^{-2})$ and hence the initial point is required to be almost feasible, i.e., $\|h(x^0)\|=\mathcal{O}(\epsilon)$. It can be satisfied by solving a convex program when $X$ is convex and $h$ is affine, or when there exists $i\in [p]$ such that $h_i(x_i) = A_ix_i-b$ where $A_i$ has full row rank. We also note that this (near) feasibility assumption on the initial point is commonly adopted in the literature to establish iteration complexity estimates for nonlinear programs \cite{boob2022stochastic,lin2019inexact,xie2021complexity}. In addition, it suffices to replace $\overline{\mathcal{P}}$ defined in \eqref{eq: p_ub} by any finite upper bound on $f(x^0) + g(x^0)$ in case $f$ is not bounded from above over $X$.
\end{remark}

\begin{lemma}[Bounds on Dual Variable and Primal Residual] \label{lemma: admm-bound-primal-res}
	Suppose Assumptions \ref{assumption: mb problem data} and  \ref{assumption: feasible initial pt} hold. Recall  
	{$\overline{\mathcal{P}}$ and $\underline{\mathcal{P}}$ from \eqref{eq: upper and lower bound on P}, constant $C$ from Assumption \ref{assumption: feasible initial pt}, and further define
	\begin{align}\label{eq: delta p}
		 \Delta :=  \overline{\mathcal{P}} -\underline{\mathcal{P}} + \frac{C}{2},
	\end{align}
	which is a constant independent of the penalty $\rho$.}
	Then $\mathcal{P}(x^k, \mu^k) \geq  \underline{\mathcal{P}}$ for all $k\in \N$. Moreover, it holds that
	\begin{align}
		\|h(x^k)\| \leq  \left( \frac{ 4 \Delta }{\rho}	\right)^{1/2},  \ \text{and} \ \|\mu^k\| \leq & \left( \rho \Delta \right)^{1/2}. \label{eq: admm-h-bound}
	\end{align}
\end{lemma}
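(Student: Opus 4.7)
The plan is to derive a one-sided lower bound on the potential $\mathcal{P}(x,\mu)$ that is strong enough to absorb the indefinite term $\langle \mu, h(x)\rangle$, then combine it with the monotonicity established in Lemma \ref{lemma: admm-one-step-descent} and the feasibility of the initial iterate.

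First, I would apply Young's inequality with a carefully tuned coefficient. For any $a>0$,
\begin{align*}
\langle \mu, h(x)\rangle \geq -\frac{a}{2}\|\mu\|^2 - \frac{1}{2a}\|h(x)\|^2.
\end{align*}
Choosing $a = 2/\rho$, substituting into $\mathcal{P}(x,\mu)$, and using $\omega \geq 4$ yields
\begin{align*}
\mathcal{P}(x,\mu) \;\geq\; f(x) + g(x) + \frac{\rho}{4}\|h(x)\|^2 + \frac{\omega-2}{2\rho}\|\mu\|^2 \;\geq\; \underline{\mathcal{P}} + \frac{\rho}{4}\|h(x)\|^2 + \frac{1}{\rho}\|\mu\|^2,
\end{align*}
where the last inequality uses $(\omega-2)/2 \geq 1$ and the definition of $\underline{\mathcal{P}}$ from \eqref{eq: p_lb}. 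In particular, this already gives $\mathcal{P}(x^k,\mu^k) \geq \underline{\mathcal{P}}$ for every $k \in \N$.

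Next, Lemma \ref{lemma: admm-one-step-descent} tells us that $\{\mathcal{P}(x^k,\mu^k)\}_{k\in\N}$ is nonincreasing, so $\mathcal{P}(x^k,\mu^k) \leq \mathcal{P}(x^0,\mu^0)$. By Assumption \ref{assumption: feasible initial pt} and the initialization $\mu^0 = 0$, $h(x^0) = 0$, the potential at iteration $0$ collapses to $\mathcal{P}(x^0,\mu^0) = f(x^0) + \sum_{i=1}^p g_i(x_i^0)$. Combining with the display above,
\begin{align*}
f(x^0) + \sum_{i=1}^p g_i(x_i^0) \;\geq\; \mathcal{P}(x^k,\mu^k) \;\geq\; \underline{\mathcal{P}} + \frac{\rho}{4}\|h(x^k)\|^2 + \frac{1}{\rho}\|\mu^k\|^2.
\end{align*}
Subtracting $\underline{\mathcal{P}}$ from both sides and using the definition \eqref{eq: delta p} of $\Delta \mathcal{P}$ gives $\frac{\rho}{4}\|h(x^k)\|^2 \leq \Delta \mathcal{P}$ and $\frac{1}{\rho}\|\mu^k\|^2 \leq \Delta \mathcal{P}$, which rearrange to the claimed inequalities in \eqref{eq: admm-h-bound}.

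The only nontrivial choice in this argument is the Young's-inequality splitting: it has to leave enough of the quadratic $\tfrac{\rho}{2}\|h(x)\|^2$ term to remain positive while simultaneously leaving $\frac{\omega}{2\rho}\|\mu\|^2$ with a positive coefficient. The condition $\omega \geq 4$ from step \ref{alg: admm-init} is exactly what makes the constants $4/\rho$ and $\rho$ work out, so I expect this step to be where the parameter choice pays off rather than a genuine obstacle.
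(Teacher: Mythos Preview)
Your proof is correct and essentially identical to the paper's: both apply Young's inequality $\langle \mu, h(x)\rangle \geq -\tfrac{\rho}{4}\|h(x)\|^2 - \tfrac{1}{\rho}\|\mu\|^2$ (your choice $a=2/\rho$), use $\omega\geq 4$ to retain the coefficient $\tfrac{1}{\rho}$ on $\|\mu\|^2$, and then sandwich $\mathcal{P}(x^k,\mu^k)$ between $\mathcal{P}(x^0,\mu^0)=f(x^0)+g(x^0)$ and $\underline{\mathcal{P}}+\tfrac{\rho}{4}\|h(x^k)\|^2+\tfrac{1}{\rho}\|\mu^k\|^2$ via Lemma~\ref{lemma: admm-one-step-descent}.
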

\begin{proof}
	Let $x^0$ be an initial point supplied to SDD-ADMM satisfying Assumption \ref{assumption: feasible initial pt} with $\alpha = \rho$. Moreover, since $\mu^0 = {0}$, we have 
	\begin{align*}
		\mathcal{P}(x^0, \mu^0) = & 
		f(x^0)+ \sum_{i=1}^p g_i(x^0_i) + \frac{\rho}{2}\|h(x^0)\|^2  \leq \overline{\mathcal{P}} + \frac{C}{2}.
	\end{align*}
	By Lemma \ref{lemma: admm-one-step-descent}, for all $k\in \N$, we have $\mathcal{P}(x^0, \mu^0)$ is greater than
	\begin{align}
		& \mathcal{P}(x^k, \mu^k) =  f(x^k)+ \sum_{i=1}^p g_i(x^k_i) + \langle \mu^{k}, h(x^k)\rangle + \frac{\rho}{2}\|h(x^k)\|^2 + \frac{\omega}{2\rho}\|\mu^k\|^2 \notag \\
		\geq & \inf_{x\in X} \biggl\{f(x) + \sum_{i=1}^p g_i(x_i)\biggr\} + \frac{\rho}{4}\|h(x^{k})\|^2 + \frac{1}{\rho}\|\mu^k\|^2 =  \underline{\mathcal{P}}  + \frac{\rho}{4}\|h(x^{k})\|^2 + \frac{1}{\rho}\|\mu^k\|^2 \geq \underline{\mathcal{P}}, \notag 
	\end{align}
	where the second inequality is due to $ \langle \mu^{k}, h(x^k)\rangle \geq -\frac{\rho}{4}\|h(x^k)\|^2 - \frac{1}{\rho} \|\mu^{k}\|^2$ and $\omega \geq 4$. The above inequality further gives the bounds in \eqref{eq: admm-h-bound}. 
\end{proof}
{Lemma \ref{lemma: admm-bound-primal-res} holds under both dual updates \eqref{eq: sdd-admm-mu} and \eqref{eq: penalty-mu}. If \eqref{eq: penalty-mu} is performed, then  \eqref{eq: admm-h-bound} can be improved to $\|h(x^k)\| \leq (2\Delta /\rho)^{1/2}$ and $\|\mu^k\| = 0$. }
Though $x^{k+1}_i$ is obtained by a single proximal gradient step, it still is an approximate stationary solution in the following sense.
\begin{lemma}[Bound on Dual Residual] \label{lemma: admm-bound-dual-res}
	Suppose Assumption \ref{assumption: mb problem data} holds. For all $k\in \N$ and $i\in [p]$, 
	\begin{align*}
		\mathrm{dist}\Big( -\nabla_i f(x^{k+1}) -\nabla h_i(x_i^{k+1}) \tilde{\mu}^{k+1}, \partial g_i(x_i^{k+1}) \Big) \leq & (\theta + 1 ) \mathrm{Lip}(\mu^k, \rho) \sum_{j\geq i} \|x_j^{k+1}-x_j^k\|,
	\end{align*}
	where $\tilde{\mu}^{k+1} := \mu^k+ \rho h(x^{k+1})$.
%	As a result, 
%	\begin{align*}
%		& \max_{i\in [p]} \Big\{ \mathrm{dist}\Big( \partial g_i(x_i^{k+1}), -\nabla_i f(x^{k+1}) -\nabla h_i(x^{k+1}) \tilde{\mu}^{k+1}\Big)\Big\} \\ \leq & (\theta + 1 ) \mathrm{Lip}(\mu^k, \rho) \sum_{i=1}^p \|x_i^{k+1}-x_i^k\|.
%	\end{align*}
\end{lemma}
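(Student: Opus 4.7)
The bound essentially says that the single proximal gradient step \eqref{eq: sdd-admm-x} produces an approximate stationary point where the residual is controlled by the block increments. My strategy is to extract a subgradient from the first-order optimality of \eqref{eq: sdd-admm-x}, then compare it to the target quantity $-\nabla_i f(x^{k+1}) - \nabla h_i(x_i^{k+1})\tilde{\mu}^{k+1}$, and finally control the discrepancy by a block-wise Lipschitz chaining argument.

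\textbf{Step 1: extract a subgradient.} The objective in \eqref{eq: sdd-admm-x} is the sum of $g_i$ and a convex smooth quadratic, so the optimality condition reads
\begin{equation*}
   -\nabla_{x_i}\mathcal{K}_\rho(x^{k+1}_{<i}, x^k_{\geq i}, \mu^k) - \theta\,\mathrm{Lip}(\mu^k,\rho)(x_i^{k+1}-x_i^k) \in \partial g_i(x_i^{k+1}).
\end{equation*}
By definition of $\tilde{\mu}^{k+1}$, one has $-\nabla_i f(x^{k+1}) - \nabla h_i(x_i^{k+1})\tilde{\mu}^{k+1} = -\nabla_{x_i}\mathcal{K}_\rho(x^{k+1},\mu^k)$. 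So
\begin{align*}
   \mathrm{dist}\bigl(-\nabla_{x_i}\mathcal{K}_\rho(x^{k+1},\mu^k), \partial g_i(x_i^{k+1})\bigr) &\leq \bigl\|\nabla_{x_i}\mathcal{K}_\rho(x^{k+1},\mu^k) - \nabla_{x_i}\mathcal{K}_\rho(x^{k+1}_{<i}, x^k_{\geq i}, \mu^k)\bigr\| \\
   &\quad + \theta\,\mathrm{Lip}(\mu^k,\rho)\|x_i^{k+1}-x_i^k\|.
\end{align*}

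\textbf{Step 2: block-wise Lipschitz estimate.} Now I need to bound $\|\nabla_{x_i}\mathcal{K}_\rho(x^{k+1},\mu^k) - \nabla_{x_i}\mathcal{K}_\rho(x^{k+1}_{<i}, x^k_{\geq i}, \mu^k)\|$. The two points differ in blocks $j\geq i$, so I will telescope block-by-block, using Lemma~\ref{eq: lip-grad-k} for the $i$-th block and an analogous partial Lipschitz bound for each $j>i$. For $j>i$, direct inspection of $\nabla_{x_i}\mathcal{K}_\rho(x,\mu)=\nabla_i f(x)+\nabla h_i(x_i)(\mu+\rho h(x))$ shows that varying $x_j$ alone changes the first term by at most $L_f\|x_j-z_j\|$ and the second term by at most $\rho J_{h_i}K_{h_j}\|x_j-z_j\|$ (since $\nabla h_i(x_i)$ is unaffected). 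Hence the Lipschitz constant in $x_j$ is at most $L_f+\rho J_h K_h \leq \mathrm{Lip}(\mu^k,\rho)$. Telescoping through the intermediate points
\begin{equation*}
   (x^{k+1}_{<i},x_i^k,x^k_{>i}) \to (x^{k+1}_{\leq i},x^k_{>i}) \to (x^{k+1}_{\leq i+1},x^k_{>i+1}) \to \cdots \to x^{k+1}
\end{equation*}
yields
\begin{equation*}
   \bigl\|\nabla_{x_i}\mathcal{K}_\rho(x^{k+1},\mu^k) - \nabla_{x_i}\mathcal{K}_\rho(x^{k+1}_{<i}, x^k_{\geq i}, \mu^k)\bigr\| \leq \mathrm{Lip}(\mu^k,\rho)\sum_{j\geq i}\|x_j^{k+1}-x_j^k\|.
\end{equation*}

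\textbf{Step 3: combine.} Plugging this back into the bound from Step~1 gives a total of $\mathrm{Lip}(\mu^k,\rho)\sum_{j\geq i}\|x_j^{k+1}-x_j^k\|$ from the gradient mismatch and an extra $\theta\,\mathrm{Lip}(\mu^k,\rho)\|x_i^{k+1}-x_i^k\|$ from the proximal step. Absorbing the latter into the $j=i$ summand yields the factor $(\theta+1)$ for the $i$-th term and a factor of $1$ for $j>i$; upper bounding everything by $(\theta+1)$ gives the claimed inequality. The only subtle point is verifying that the cross-block Lipschitz constant $L_f+\rho J_h K_h$ indeed fits inside $\mathrm{Lip}(\mu^k,\rho)$; this is immediate from \eqref{eq: admm-lip}, so I do not anticipate a serious obstacle.
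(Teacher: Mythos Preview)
Your proposal is correct and follows essentially the same approach as the paper: both extract a subgradient from the optimality condition of \eqref{eq: sdd-admm-x} and bound the discrepancy between $\nabla_{x_i}\mathcal{K}_\rho(x^{k+1}_{<i},x^k_{\geq i},\mu^k)$ and $\nabla_{x_i}\mathcal{K}_\rho(x^{k+1},\mu^k)$ using the Lipschitz constants in \eqref{eq: mp h const}. The only cosmetic difference is that you package the estimate via a block-wise telescoping argument, whereas the paper expands $\xi_i^{k+1}$ explicitly and bounds each term directly; both arrive at the same $(L_f+\rho J_hK_h)\sum_{j\ge i}\|x_j^{k+1}-x_j^k\| + (L_h\|\mu^k\|+\rho L_hM_h+\theta\,\mathrm{Lip}(\mu^k,\rho))\|x_i^{k+1}-x_i^k\|$ bound before absorbing into $(\theta+1)\mathrm{Lip}(\mu^k,\rho)$.
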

\begin{proof}
	The update of $x^{k+1}_i$ gives
%	\begin{align*}
%		0 \in & \nabla_i f(x^{k+1}_{< i}, x_{\geq i}^k) + \partial g_i(x^{k+1}_i) +\nabla h_i(x_i^k)\left(\mu^k+ \rho \sum_{j< i} h_i(x_j^{k+1}) + \sum_{j\geq i} h_j(x_j^k) \right) \\
%		  & + \theta \mathrm{Lip}(\mu^k, \rho) (x^{k+1}_i - x_i^k).
%	\end{align*}
%	It follows that 
	$\xi^{k+1}_i \in \nabla_i f(x^{k+1}) +  \partial g_i(x^{k+1}_i) + \nabla h_i(x_i^{k+1}) \tilde{\mu}^{k+1}$, where 
	\begin{align*}
		 \xi^{k+1}_i & :=  \nabla_i f(x^{k+1}) - \nabla_i f(x^{k+1}_{< i}, x_{\geq i}^k) - \theta \mathrm{Lip}(\mu^k, \rho) (x^{k+1}_i - x_i^k) \notag \\
		&  + \nabla h_i(x_i^{k+1})(\mu^k + \rho h(x^{k+1})) - \nabla h_i(x_i^k)\Big(\mu^k+ \rho \sum_{j< i} h_i(x_j^{k+1}) + \rho \sum_{j\geq i} h_j(x_j^k) \Big). 
	\end{align*}
	The last two terms in the definition of $\xi_i^{k+1}$ can be bounded by 
	\begin{align*}
%		& \left\| \nabla h_i(x_i^{k+1})(\mu^k + \rho h(x^{k+1})) - \nabla h_i(x_i^k)\left(\mu^k+ \rho \sum_{j< i} h_i(x_j^{k+1}) + \rho \sum_{j\geq i} h_j(x_j^k) \right)\right\| \\
%		\leq 
		& \|(\nabla h_i(x^{k+1}_i)-\nabla h_i(x_i^k)) (\mu^k + \rho h(x^{k+1})\| +\rho  \| \nabla h_i(x_i^k) \|  \sum_{j\geq i} \|h_j(x_j^{k+1}) - h_j(x_j^k)\| \\
		\leq & (L_h \|\mu^k\| + \rho L_h M_h) \|x_i^{k+1}-x_i^k\|+ \rho J_h K_h \sum_{j\geq i} \|x_j^{k+1}-x_j^k\|;
	\end{align*}
	by the smoothness of $f$ and the definition of $\mathrm{Lip}(\mu^k, \rho)$ in \eqref{eq: admm-lip}, $\|\xi_i^k\|$ is bounded by
	\begin{align}
		& (L_f + 	\rho J_h K_h) \sum_{j\geq i} \|x_j^{k+1}-x_j^k\| + (L_h \|\mu^k\| + \rho L_h M_h + \theta \mathrm{Lip}(\mu^k, \rho))\|x_i^{k+1}-x_i^k\| \notag \\
		\leq & (\theta + 1 ) \mathrm{Lip}(\mu^k, \rho) \sum_{j\geq i} \|x_j^{k+1}-x_j^k\|. \notag 
	\end{align}
	This completes the proof. 
\end{proof}

With the help of the previous lemmas, we are now ready to present an iteration complexity upper bound for SDD-ADMM.
\begin{theorem}\label{thm: SDD-ADMM}
	Suppose Assumptions \ref{assumption: mb problem data} and \ref{assumption: feasible initial pt} hold, and let $\epsilon>0$. {Recall parameters $(M_h, K_h, J_h, L_h)$ from \eqref{eq: mb-h-constants} and $\Delta$ in \eqref{eq: delta p}, and define constants}
	\begin{align}
		\kappa_1 := J_hK_h + M_hL_h, \ {\kappa_2 :=  L_h \sqrt{\Delta } + \kappa_1 + 1.}
	\end{align}
	{Further choose $\rho \geq \max\{1, L_f, 4\Delta \epsilon^{-2}\}$,
	% \begin{align}
	% 	\rho \geq \max\{1, L_f, 4\Delta \epsilon^{-2}\}, 
	% \end{align}
	and let  $x^0\in X$ be an initial point satisfying Assumption \ref{assumption: feasible initial pt} with $\alpha = \rho$. }
Then SDD-ADMM {with input $(x^0,\rho, \omega, \theta, \tau)$} finds an $\epsilon$-stationary solution of \eqref{eq: mb-nlp} in at most $K(\rho)$ iterations, where
\begin{align}\label{eq: admm upper bd K}
		{K(\rho):=  \left\lceil \frac{2p \Delta (\theta+1)^2 \kappa^2_2  \rho}{(\theta-1) \kappa_1 \epsilon^2} \right\rceil }= \mathcal{O}(\rho \epsilon^{-2}).
	\end{align}
	In particular, if we choose $\rho= \Theta(\epsilon^{-2})$, then  {$K(\rho) = \mathcal{\mathcal{O}}(\epsilon^{-4})$.}
\end{theorem}
\begin{proof}
	We first show that $\mathrm{Lip}(\mu^k, \rho) = \Theta(\rho)$. {Since $\rho \geq \max\{1, L_f\}$, by the second inequality in \eqref{eq: admm-h-bound} of Lemma \ref{lemma: admm-bound-primal-res}, we have $\|\mu^k\| \leq \sqrt{\rho \Delta} \leq \rho \sqrt{\Delta}$} and 
	\begin{align}\label{eq: bound on lip}
		\rho\kappa_1   \leq \mathrm{Lip}(\mu^k, \rho) = L_f + \|\mu^k \| L_h + \rho (J_hK_h + M_hL_h) \leq {\rho\kappa_2. }
	\end{align}
	The above lower bound of $\mathrm{Lip}(\mu^k, \rho)$ and Lemma \ref{lemma: admm-one-step-descent} together give
	\begin{align*} 
		\frac{(\theta-1)\kappa_1}{2} \rho \sum_{i=1}^p \|x_i^{k+1}-x_i^k\|^2 \leq \mathcal{P}(x^k, \mu^k) - \mathcal{P}(x^{k+1}, \mu^{k+1}).
	\end{align*}
	Summing the above inequality from $k=0$ to some positive index $K-1$, we have
	\begin{align}\label{eq: bound sum x difference}
		\frac{(\theta-1)\kappa_1}{2} \rho \sum_{k=0}^{K-1} 	 \sum_{i=1}^p \|x_i^{k+1}-x_i^k\|^2  \leq \Delta.
	\end{align}
	As a result, there exists an index $0\leq \bar{k} \leq K-1$ such that 
	\begin{align}\label{eq: bound-on-sum-difference}
		\sum_{i=1}^p\|x_i^{\bar{k}+1}-x^{\bar{k}}_i\|\leq  \sqrt{p} \left( \sum_{i=1}^p \|x_i^{\bar{k}+1}-x^{\bar{k}}_i\|^2 \right)^{1/2} \leq \left( \frac{2p \Delta}{\rho (\theta-1)\kappa_1 K}\right)^{1/2}.
	\end{align}
	By \eqref{eq: admm-h-bound} in Lemma \ref{lemma: admm-bound-primal-res} and the choice that $\rho\geq 4\Delta\epsilon^{-2}$, we have $\|h(x^{\bar{k}+1})\| \leq \epsilon$. Moreover, recall $\tilde{\mu}^{\bar{k}+1} = \mu^{\bar{k}}+\rho h(x^{\bar{k}+1})$; Lemma \ref{lemma: admm-bound-dual-res}, the upper bound in \eqref{eq: bound on lip}, and \eqref{eq: bound-on-sum-difference} imply that
	\begin{align*}
		& \max_{i\in [p]} \Big\{ \mathrm{dist}\Big(  -\nabla_i f(x^{\bar{k}+1}) -\nabla h_i(x^{\bar{k}+1}) \tilde{\mu}^{\bar{k}+1}, \partial g_i(x_i^{\bar{k}+1})\Big) \Big\} \\ 
		\leq & (\theta + 1 ) \mathrm{Lip}(\mu^{\bar{k}}, \rho) \sum_{i=1}^p \|x_i^{\bar{k}+1}-x_i^{\bar{k}}\| \leq  (\theta + 1 ) {\kappa_2 \rho}  \left( \frac{2p \Delta}{\rho (\theta-1)\kappa_1 K}\right)^{1/2} \leq \epsilon,
	\end{align*}
	where the last inequality holds by the upper bound {$K=K(\rho)$} in \eqref{eq: admm upper bd K}. This completes the proof. 
\end{proof}

{In view of Lemma \ref{lemma: admm-bound-primal-res} and Theorem \ref{thm: SDD-ADMM}, the primal infeasibility is bounded by $\sqrt{4\Delta/\rho}$ while the dual infeasibility can be reduced to $\epsilon$ in $\mathcal{O}(\rho \epsilon^{-2})$ iterations. Such  measures can be informative if different primal and dual tolerances are preferred.} 

\subsubsection{Improve Iteration Complexity to $\mathcal{O}(\epsilon^{-3})$ and $\mathcal{O}(\epsilon^{-2})$}
{
Next we show that under an additional technical assumption, we can further improve the iteration complexity of SDD-ADMM. Given $r>0$ and $i\in [p]$, define 
\begin{align}
	X(r) := & \{x\in X~|~\|h(x)\| \leq r\}, \\
	X_i(r): = & \{x_i\in X_i~|~(x_{\neq i},x_i) \in X(r) \text{~for some $x_j\in X_j$, $j\in [p]\setminus\{i\}$}\}.
\end{align}
By Assumption \ref{assumption: feasible initial pt}, we know that $X(r)$ is nonempty for any $r>0$ and thus its projection $X_i(r)$ is also nonempty.
Now we further make the following assumption.
\begin{assumption}\label{assumption: improved rate}
	There exist $i\in [p]$, $(r, \sigma)\in \R^2_{++}$, and $(M_g, \nabla_f) \in \R_+^2$ such that 
	\begin{align}
		& \sigma \|\mu\| \leq \mathrm{dist}(-\nabla h_i(x_i) \mu, \partial g_i(x_i)) + M_g, \ \forall \mu \in \R^{m}, x_i \in X_i(r), \   \text{and} \ \label{eq: full rank} \\
		& \sup_{x\in X(r)} \|\nabla_i f(x)\| \leq \nabla_{f}. \label{eq: nabla f bounded}
	\end{align} 
\end{assumption}
\begin{remark}
	We give some comments regarding Assumption \ref{assumption: improved rate}. 
	\begin{enumerate}
		\item Suppose that $\nabla h_i(x_i)$ has full rank over $X_i(r)$, and their smallest singular values are bounded away from zero, i.e.,
		\begin{align}\label{eq: pos singular}
			\inf_{x_i\in X_i(r)} \sigma_{\min}(\nabla h_i(x_i)) > 0.
		\end{align}
        In Appendix \ref{examples}, we show that broad classes of  $g_i$ functions can ensure condition \eqref{eq: full rank} with the help of \eqref{eq: pos singular} or a similar constraint qualification. In particular, $g_i$ can be (Example \ref{example1}) a possibly nonconvex Lipschitz function, (Example \ref{example2}) a function of the form $\delta_{X_i} + \tilde{g}_i$ where $X_i$ is a sufficiently large full-dimensional closed convex set and $\tilde{g}_i$ is continuous and convex over $X_i$, or (Example \ref{example3}) an indicator function of a set defined by continuously differentiable constraints satisfying a constraint qualification. 
		\item Clearly, $h_i(x_i)=Ax-b$ with full row rank always implies \eqref{eq: pos singular} as $\sigma_{\min}(A)>0$. Even in this case, \eqref{eq: full rank} is still weaker than conditions (a) and (b) commonly adopted in existing ADMM works (reviewed in Section \ref{sec: admm lit}) as we allow the presence of some nonsmooth $g_i$.
		\item Condition \eqref{eq: nabla f bounded} is rather mild and can be satisfied under the boundedness of either $\nabla_i f$ or $X(r)$.
		\item A reasonable direction to further weaken Assumption \ref{assumption: improved rate} is to restrict the regions of $x_i$ and $x$ on which \eqref{eq: full rank} and \eqref{eq: nabla f bounded} hold. For example,  one can directly assume \eqref{eq: full rank} and \eqref{eq: nabla f bounded} hold on all algorithmic iterates $\{x^{k+1}\}_{k\in \N}$.
	\end{enumerate}
\end{remark}
We further comment on condition \eqref{eq: pos singular}. As a concrete example, consider $i=p=1$ and $h(x) = x^\top x - R$ for some $R>0$. Then given any $0 < r < R$, it holds that $ X(r) \subset \{x\in \R^n~|~ R-r \leq x^\top x \leq R + r\}$, and hence $$\sigma_{\min}(\nabla h(x)) = 2\|x\|\geq 2(R-r)^{1/2} > 0$$ for all $x\in X(r)$. In nonlinear programs, this condition is closely related to the well-known linearly independence constraint qualification (LICQ) commonly assumed on KKT points. 
% {Note that essentially our condition is imposed over $X_i(0)$, i.e., the feasible region of $x_i$, which can be justified by the Sard's Theorem, and then extended to $X_i(r)$ by the continuity of the rank of $\nabla h_i$. When nonlinear and nonconvex constraints, existing algorithms based on ALM/penalty methods \cite{li2021rate, Sahin2019alm, xie2021complexity, lin2019inexact}, sequential quadratic programs (SQP) \cite{berahas2021sequential, curtis2021inexact}, and proximal point methods (PPM) \cite{boob2022stochastic, ma2019proximally} all rely on certain regularity conditions to control dual variables. Condition \eqref{eq: pos singular} is different from all those used in the literature, generalizes the classic rank condition used for convergence of affine-constrained ADMM, and enables us to derive a new $\mathcal{O}(\epsilon^{-2})$ first-order iteration complexity estimate.} 
{
It is worth noting that our condition is primarily imposed on $X_i(0)$, the feasible region of $x_i$, which is justified by Sard's Theorem. Consequently, we extend this condition to $X_i(r)$ through the continuity of the rank of $\nabla h_i$. In the context of nonlinear and nonconvex constraints, existing algorithms such as those based on ALM/penalty methods \cite{li2021rate, Sahin2019alm, xie2021complexity, lin2019inexact}, sequential quadratic programs (SQP) \cite{berahas2021sequential, curtis2021inexact}, and proximal point methods (PPM) \cite{boob2022stochastic, ma2019proximally} all rely on specific regularity conditions to control the behavior of dual variables. In contrast, our condition \eqref{eq: pos singular} differs from those used in the literature. It not only generalizes the classic rank condition employed for the convergence of affine-constrained ADMM but also enables us to derive a novel first-order iteration complexity estimate of $\mathcal{O}(\epsilon^{-2})$. }

With Assumption \ref{assumption: improved rate}, we can derive new bounds on dual variables.
\begin{lemma}\label{lemma: new dual bounds}
	Suppose Assumptions \ref{assumption: mb problem data}, \ref{assumption: feasible initial pt}, and \ref{assumption: improved rate} hold. Further define constants 
	\begin{align}\label{eq: kappa_34}
		\kappa_3 := \frac{(\theta+1)\kappa_2}{\sigma} \sqrt{ \frac{2p\Delta}{(\theta-1)\kappa_1}}, 
%		\ \kappa_4 : = \frac{\kappa_3}{3} +  \frac{(1+\tau)\omega}{\omega-1} \sqrt{ \frac{2\Delta}{\omega} 
		\ \kappa_4 : = \frac{\kappa_3+ (1+\tau) \sqrt{2\Delta \omega}}{3}.
	\end{align}
	Suppose that $\rho \geq \max\{1, L_f, 4\Delta/r^2\}$. For any positive integer $K >0$, there exists an index $0\leq \bar{k} \leq K-1$ such that 
	\begin{align}\label{eq: new dual bounds}
		\| \tilde{\mu}^{\bar{k}+1}\|\leq \kappa_3 \sqrt{\frac{\rho}{K}} + \frac{\nabla_{f}+M_g}{\sigma}, \ \text{and} \ \|\mu^{\bar{k}}\| \leq \kappa_4 \sqrt{\frac{\rho}{K}} + \frac{\nabla_{f}+M_g}{3\sigma},
	\end{align}
	where $\tilde{\mu}^{\bar{k}+1} = \mu^{\bar{k}} +  \rho h(x^{\bar{k}+1})$.
\end{lemma}
\begin{proof}
	By Lemma \ref{lemma: admm-bound-primal-res}, the choice $\rho \geq 4\Delta/r^2$ ensures that $\{x^{k+1}\}_{k\in \N} \subset X(r)$, and hence Assumption \ref{assumption: improved rate} can be applied.
	Let $i\in [p]$ be the index specified in Assumption \ref{assumption: improved rate}. By Lemma \ref{lemma: admm-bound-dual-res}, there exists $\xi^{k+1}_i \in \R^{n_i}$ such that 
	\begin{align*}
		&\xi^{k+1}_i \in \nabla_i f(x^{k+1}) + \nabla h_i(x_i^{k+1}) \tilde{\mu}^{k+1} +  \partial g_i(x^{k+1}_i), \\
		& \|\xi^{k+1}_i\|\leq (\theta + 1 ) \mathrm{Lip}(\mu^k, \rho) \sum_{j=1}^p \|x_j^{k+1}-x_j^k\|.
	\end{align*}
	Hence, by Assumption \ref{assumption: improved rate} and the fact that $\mathrm{Lip}(\mu^k, \rho) \leq \kappa_2 \rho$ from \eqref{eq: bound on lip}, we have
	\begin{align}
		\sigma \|\tilde{\mu}^{k+1}\| \leq & \mathrm{dist}(-\nabla h_i(x_i) \tilde{\mu}^{k+1}, \partial g_i(x_i^{k+1})) + M_g\leq  \|\xi_i^{k+1}\| + \|\nabla f_i(x^{k+1})\| + M_g	\notag  \\
		\leq & (\theta + 1 ) \kappa_2 \rho \sum_{j=1}^p \|x_j^{k+1}-x_j^k\| + \nabla_{f}+M_g. \notag 
	\end{align}
	By \eqref{eq: bound-on-sum-difference} and the above inequality, we have 
	\begin{align}\label{eq: bound on true mu 2}
		\| \tilde{\mu}^{\bar{k}+1}\|
		\leq & \frac{(\theta+1)\kappa_2 \rho}{\sigma}  \left( \frac{2p \Delta}{\rho (\theta-1)\kappa_1 K}\right)^{1/2} + \frac{\nabla_{f}+M_g}{\sigma}.
	\end{align}
	{Hence the first inequality in \eqref{eq: new dual bounds} is proved for both dual updates \eqref{eq: sdd-admm-mu} and \eqref{eq: penalty-mu}. Since in the penalty method we have $\mu^k = 0$, it remains to prove the second inequality in \eqref{eq: new dual bounds} under the SDD update \eqref{eq: sdd-admm-mu}}. By the SDD update and the definition of $\tilde{\mu}^{k+1}$, we have
	\begin{align*}
		0 = & \rho h(x^{k+1}) + (1+\tau) \omega \mu^{k+1} - \tau \omega \mu^k = \tilde{\mu}^{k+1} + (1+\tau) \omega (\mu^{k+1}-\mu^k) + (\omega-1) \mu^k,
	\end{align*}
	which implies that 
	\begin{align}\label{eq: initial bound on mu_k}
		\|\mu^k\| \leq \frac{1}{\omega-1}\|\tilde{\mu}^{k+1}\| + \frac{(1+\tau)\omega}{\omega-1}\|\mu^{k+1}-\mu^k\|.
	\end{align}
	Next we bound the two terms on the right-hand side of \eqref{eq: initial bound on mu_k} at a specific index $\bar{k}$. By Lemma \ref{lemma: admm-one-step-descent} and a similar argument as in the proof of Theorem \ref{thm: SDD-ADMM}, we have for any positive integer $K$, it holds that 
	\begin{align*}
		\Delta 
  % \geq & \sum_{k=0}^{K-1} \left(\frac{(\theta-1)\kappa_1}{2} \rho \sum_{i=1}^p \|x^{k+1}_i - x_i^{k}\|^2 + \frac{\omega}{2\rho} \|\mu^{k+1}-\mu^k\|^2 \right) \\
		 \geq & K \left(\frac{(\theta-1)\kappa_1}{2} \rho \sum_{i=1}^p \|x^{\bar{k}+1}_i - x_i^{\bar{k}}\|^2 + \frac{\omega}{2\rho} \|\mu^{\bar{k}+1}-\mu^{\bar{k}}\|^2  \right),
	\end{align*}
	where
	\begin{align*}
		\bar{k } := \argmin_{k \in \{0, \cdots, K-1\}} \left\{ \frac{(\theta-1)\kappa_1}{2} \rho \sum_{i=1}^p \|x^{k+1}_i - x_i^{k}\|^2 + \frac{\omega}{2\rho} \|\mu^{k+1}-\mu^k\|^2 \right \},
	\end{align*}
	then we know that \eqref{eq: bound-on-sum-difference} holds and 
	\begin{align}\label{eq: bound dual differ}
		\|\mu^{\bar{k}+1} - \mu^{\bar{k}}\| \leq \left( \frac{2\rho \Delta}{\omega K}\right)^{1/2}.
	\end{align}
	Combining \eqref{eq: bound on true mu 2}, \eqref{eq: initial bound on mu_k}, and \eqref{eq: bound dual differ}, and , we have
	\begin{align*}
		\|\mu^{\bar{k}}\| \leq    \frac{(\theta+1)\kappa_2 \rho}{(\omega-1)\sigma}  \left( \frac{2p \Delta}{\rho (\theta-1)\kappa_1 K}\right)^{1/2} + \frac{(1+\tau)\omega}{\omega-1} \left( \frac{2\rho \Delta}{\omega K}\right)^{1/2} + \frac{\nabla_{f}+M_g}{(\omega-1)\sigma}.
	\end{align*}
	This completes the proof in view of $(\kappa_3, \kappa_4)$ defined in \eqref{eq: kappa_34} and the fact that $\omega\geq 4$.  
\end{proof}
\begin{theorem}\label{thm: SDD-ADMM improved}
	Suppose Assumptions \ref{assumption: mb problem data}, \ref{assumption: feasible initial pt}, and \ref{assumption: improved rate} hold, and let $\epsilon >0$. Recall $(r, \sigma, \nabla_{f}, M_g)$ is required in Assumption \ref{assumption: improved rate},   $(\kappa_3, \kappa_4)$ is defined in \eqref{eq: kappa_34}, and $K(\rho)$ is defined in \eqref{eq: admm upper bd K}. Choose $\rho \geq \max\{1, L_f, 4\Delta/r^2\}$.
	\begin{itemize}
		\item If $\nabla_{f}+M_g > 0$, then further let
		\begin{align}\label{eq: rho lb improved}
			\rho \geq  \left(\kappa_3+\kappa_4 + \frac{4(\nabla_{f}+M_g)}{3\sigma}\right) \epsilon^{-1},	
		\end{align}
		and let  $x^0\in X$ be an initial point satisfying Assumption \ref{assumption: feasible initial pt} with $\alpha = \rho$. Then SDD-ADMM with input $(x^0,\rho, \omega, \theta, \tau)$ finds an $\epsilon$-stationary solution of \eqref{eq: mb-nlp} in at most 
		\begin{align}
			K'(\rho): = \max\{ \lceil\rho \rceil, K(\rho) \}
		\end{align} iterations. In particular, if we choose $\rho = \Theta(\epsilon^{-1})$, then $K'(\rho) = \mathcal{O}(\epsilon^{-3})$. 
		\item If $\nabla_{f} = M_g = 0$, let  $x^0\in X$ be an initial point satisfying Assumption \ref{assumption: feasible initial pt} with $\alpha = \rho$. Then SDD-ADMM with input $(x^0,\rho, \omega, \theta, \tau)$ finds an $\epsilon$-stationary solution of \eqref{eq: mb-nlp} in at most 
		\begin{align}
			K''(\rho) := \max\{\lceil(\kappa_3+\kappa_4)^2 \epsilon^{-2}\rceil, K(\rho)\}
		\end{align}
		iterations. In particular, if we choose $\rho =\Theta(1)$, then $K''(\rho) = \mathcal{O}(\epsilon^{-2})$.
	\end{itemize}
\end{theorem}
\begin{proof}
	By a similar argument as in the proof of Theorem \ref{thm: SDD-ADMM}, at the index $\bar{k}$ specified in Lemma \ref{lemma: new dual bounds} with $K = K(\rho)$, the dual residual is bounded by $\epsilon$, i.e., 
	\begin{align*}
		\max_{i\in [p]} \Big\{ \mathrm{dist}\Big(  -\nabla_i f(x^{\bar{k}+1}) -\nabla h_i(x^{\bar{k}+1}) \tilde{\mu}^{\bar{k}+1}, \partial g_i(x_i^{\bar{k}+1})\Big) \Big\} \leq \epsilon.
	\end{align*}
	It remains to show $\|h(x^{\bar{k}+1})\| \leq \epsilon$. By the definition of $\tilde{\mu}^{\bar{k}+1}$, we have 
	\begin{align}\label{eq: bound primal improved}
		\|h(x^{\bar{k}+1})\| \leq &  \frac{1}{\rho}( \|\tilde{\mu}^{\bar{k}+1}\| + \|\mu^{\bar{k}}\|) \leq  \frac{1}{\rho} \left( (\kappa_3+\kappa_4)  \sqrt{\frac{\rho}{K}} + \frac{4(\nabla_{f}+M_g)}{3\sigma} \right),
%		\leq  & \frac{1}{\rho} \left( \kappa_3 + \kappa_4 + \frac{4\nabla_{f_i}}{3\sigma} \right) \leq \epsilon. \notag 
	\end{align}
	where the second inequality is due to Lemma \ref{lemma: new dual bounds}. Next we consider the two cases separately. 
	\begin{itemize}
		\item If $\nabla_{f}+M_g >0$, then \eqref{eq: bound primal improved} gives
		\begin{align}
			\|h(x^{\bar{k}+1})\| \leq  \frac{1}{\rho} \left( \kappa_3 + \kappa_4 + \frac{4(\nabla_{f}+M_g)}{3\sigma} \right) \leq \epsilon, \notag 
		\end{align}
		where the first inequality holds with any $K\geq \rho$, and the second inequality is due to the choice of $\rho$ in \eqref{eq: rho lb improved}.
		\item  If $\nabla_{f}=M_g = 0$, then  \eqref{eq: bound primal improved}  gives
		\begin{align}
			\|h(x^{\bar{k}+1})\| \leq  (\kappa_3+\kappa_4)  \sqrt{\frac{1}{\rho K}} \leq  (\kappa_3+\kappa_4) \sqrt{\frac{1}{ K}}  \leq \epsilon,  \notag
		\end{align}
		which the second inequality is due to $\rho\geq 1$ and the last inequality holds with any $K \geq (\kappa_3+\kappa_4)^2 \epsilon^{-2}$.
	\end{itemize}
	This completes the proof.
\end{proof}
\begin{remark}
	In view of Examples \ref{example2} and \ref{example3}, it is possible to have $M_g=0$ with $g_i$ being the indicator function of some proper sets. While the condition $\nabla_f=0$ is a little restrictive as it means that $f$ is constant with respect to $x_i$, this is not impossible as we work with multi-block problems. As a result, our $\mathcal{O}(\epsilon^{-2})$ complexity estimate in Theorem \ref{thm: SDD-ADMM improved}, if not stronger than, complements the previous ADMM works in the sense that we do not reply on both conditions (a) and (b) discussed in Section \ref{sec: admm lit}.
\end{remark}
}

\subsection{{Adaptive SDD-ADMM} \label{sec: adaptive admm}}
{In Algorithm \ref{alg: SDD-ADMM},  we use a fixed penalty $\rho$, which is in the order of $\Theta(\epsilon^{-2})$ in view of Theorem \ref{thm: SDD-ADMM}, or $\Theta(\epsilon^{-1})$ and $\Theta(1)$ in view of Theorem \ref{thm: SDD-ADMM improved}. The exact value of $\rho$ depends on the problem data, i.e., parameters $(L_f, M_h, K_h, J_h, L_h)$ and $(r, \sigma, \nabla_f, M_g)$ required in Assumption \ref{assumption: improved rate}, and may not be straightforward to estimate for some applications. In this subsection, we show that it is possible to find an $\epsilon$-stationary point of problem \eqref{eq: mb-nlp} through multiple calls of SDD-ADMM with increasing $\rho$'s. Moreover, this adaptive version does not deteriorate the iteration estimates established in Theorems \ref{thm: SDD-ADMM} and \ref{thm: SDD-ADMM improved}.}	
\begin{algorithm}[H]
	\caption{: \texttt{Adaptive SDD-ADMM}} \label{alg: adaptive SDD-ADMM}
	\begin{algorithmic}[1]
		 \State \textbf{Input} $(\rho_0, \omega, \theta, \tau, \epsilon) \in (0, +\infty) \times [4, +\infty) \times (1, \infty) \times[0, +\infty) \times (0, +\infty)$; 
		 \State \textbf{initialize} index $t \gets 0$;
		\While{an $\epsilon$-stationary solution of \eqref{eq: mb-nlp} is not found}
		\State $t\gets t+1$;
		\State find $x^{t,0}\in X$ satisfying Assumption \ref{assumption: feasible initial pt} with $\alpha = \rho_t:  = 2^t \rho_0$;
		\State run \texttt{SDD-ADMM}$(x^{t,0}, \rho_t, \omega, \theta, \tau)$ for at most $K(\rho_t)$ iterations;
		\EndWhile
	\end{algorithmic}
\end{algorithm}
{The proposed adaptive version of SDD-ADMM is presented in Algorithm \ref{alg: adaptive SDD-ADMM}. Essentially we start SDD-ADMM with a relatively small penalty $\rho_t$ for some iterations, and rerun SDD-ADMM with $\rho_{t+1} = 2\rho_t$ until an $\epsilon$-stationary solution is located. One technical issue is that, we need to initialize the $t$-th SDD-ADMM with a proper $x^0\in X$ satisfying Assumption \ref{assumption: feasible initial pt} with $\alpha = \rho_t$. Of course, if some $x^0 \in X\cap \{x|h(x^0)=0\}$ is available, then we can set $x^{t,0} = x^0$ for all index $t\geq 1$. Otherwise, any primal iterate in the $t$-th SDD-ADMM satisfying  Assumption \ref{assumption: feasible initial pt} with $\alpha = \rho_{t+1}$ can serve as $x^{t+1,0}$.}

{Though invoking a sequence of calls to SDD-ADMM, this adaptive version preserves the same iteration complexities established in Theorems \ref{thm: SDD-ADMM} and \ref{thm: SDD-ADMM improved}. To see this, denote the total number of SDD-ADMM calls by $T$. Notice that in view of Theorem \ref{thm: SDD-ADMM}, there exists a constant $B>0$ such that $K(\rho) \leq B \rho \epsilon^{-2}$. The total number of SDD-ADMM iterations can be then bounded by 
\begin{align}\label{eq: total sdd-admm}
	\mathcal{T}: = \sum_{t=1}^T K(\rho_t) \leq B\epsilon^{-2} \times  \sum_{t=1}^T \rho_0 2^t = 	B\epsilon^{-2} \times 2\rho_0(2^T-1) \leq  2B\epsilon^{-2}\rho_02^T.
\end{align}
	By Theorem \ref{thm: SDD-ADMM}, it suffices to find $T$ such that $\rho_T = \rho_0 2^T = \Theta(\epsilon^{-2})$, plugging which into \eqref{eq: total sdd-admm} gives the same $\mathcal{T} = \mathcal{O}(\epsilon^{-4})$ iteration complexity estimate. Similarly under assumptions of Theorem \ref{thm: SDD-ADMM improved}, the orders of $K'(\rho) =\mathcal{O}(\epsilon^{-3})$ and $K''(\rho) =\mathcal{O}(\epsilon^{-2})$ can be preserved as well.
}

\section{Unscaled Dual Descent ALM}\label{sec: udd}
The success of SDD motivates us to ask a natural question: what if we skip the scaling step? In this section, we investigate the unscaled dual descent update for solving the following special case of problem \eqref{eq: mb-nlp}, where $p=1$, $g$ is convex, and $h$ is affine:
\begin{align}\label{eq: lc problem}
	\min_{x\in \R^n} \left\{ f(x) + g(x) ~|~ h(x):=Ax-b = 0 \right\},
\end{align}
We note that the analysis in this section can be applied to a more general multi-block setting, while focusing on $p=1$ suffices to demonstrate the behavior of unscaled dual descent. Different from SDD-ADMM, the convergence of UDD-ALM requires certain regularity or constraint qualification to hold at the primal limit point, so that the sequence of dual variables has a bounded subsequence and the augmented Lagrangian function may then serve as a potential function. {We focus on the structured setup \eqref{eq: lc problem} in this section. In Appendix \ref{sec: udd-nonlinear}, we generalize the analysis to handle a more challenging setting with nonconvex $g$ and nonlinear $h$ by assuming a stronger subproblem oracle.}

Formally, we adopt the following assumptions.
\begin{assumption}\label{assumption: udd}
	We make the following assumptions regarding problem \eqref{eq: lc problem}.
	\begin{enumerate}
		\item The function $g:\R^{n} \rightarrow \overline{\R}$ can be decomposed as $g_0 + \delta_{X}$, where $X \subseteq \R^n$ is convex and compact, and $g_0:\R^{n} \rightarrow \R$ is continuous and convex over $X$. 
		\item The function $f:\R^{n} \rightarrow \R$ has $L_f$-Lipschitz gradient over $X$.
		\item The constraints $h(x) = Ax-b$ are affine with $A\in \R^{m\times n}$ and $b\in \R^m$, and $X\cap\{x|Ax=b\} \neq \emptyset$. 
	\end{enumerate}
\end{assumption}

Recall the definition of $\mathcal{K}_{\rho}$ in \eqref{eq: smooth AL}. We see $\nabla_x \mathcal{K}_{\rho}(x, \mu) = \nabla f(x) + A^\top \mu + \rho A^\top (Ax-b)$ is Lipschitz with modulus $L_f + \rho \|A^\top A\|$,
%\begin{align}
%	L_{\mathcal{K}}:= 	L_f + \rho \|A^\top A\|,
%\end{align}
which is independent of $\mu$ due to the linearity of constraints. This fact allows us to use a single proximal gradient step to update $x$. See Algorithm \ref{alg: UDD-ALM}. 
\begin{algorithm}[h!]
	\caption{: \texttt{UDD-ALM} for problem \eqref{eq: lc problem}} \label{alg: UDD-ALM}
	\begin{algorithmic}[1]
        \State \textbf{Initialize} $x^0\in X$, $\mu^0 \in \R^m$, $\rho\geq 0$, $\varrho > 0$, and $\theta >1$; set $L_{\mathcal{K}} = L_f + \rho \|A^\top A\|$;
		\For{$k=0,1,2\cdots$}
		\State perform a proximal gradient step:
		\begin{align}\label{eq: udd-alm-x}
			x^{k+1} 
%			= & \argmin_{x\in \R^n} g(x) + \langle \nabla_x \mathcal{K}_{\rho}(x^k, \mu^k), x-x^k \rangle + \frac{\theta L_\mathcal{K}}{2}\|x-x^k\|^2 \notag \\
			= &\argmin_{x\in \R^n}  g(x) + \langle \nabla f(x^k)+ A^\top (\mu^k + \rho (Ax^{k}-b)), x-x^k \rangle + \frac{\theta L_\mathcal{K}}{2}\|x-x^k\|^2;
		\end{align}
		\State update $\mu^{k+1}$ through a unscaled dual descent update: 
		\begin{align}\label{eq: udd-alm-dual}
			\mu^{k+1} = \mu^k -  \varrho (Ax^{k+1}-b);
		\end{align}
		\EndFor
	\end{algorithmic}
\end{algorithm}

%\subsubsection{Convergence of UDD-ALM with Affine Constraints}
Firstly we establish the descent of the augmented Lagrangian function.
\begin{lemma}[One-step Progress of UDD-ALM]\label{lemma: udd one step descent}
	Suppose Assumption \ref{assumption: udd} holds. For all $k\in \Z_{++}$, we have
	\begin{align} %\label{eq: udd one step descent}
	 	L_{\rho}(x^{k}, \mu^k) -	L_{\rho}(x^{k+1},\mu^{k+1})  \geq  \left(\frac{2\theta-1}{2}\right)  L_{\cal{K}}  \|x^{k+1}-x^k\|^2 + \varrho\|Ax^{k+1}-b\|^2.	
	\end{align}
\end{lemma}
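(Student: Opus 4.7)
The plan is to split the one-step progress into a primal descent piece and a dual descent piece:
\begin{align*}
L_{\rho}(x^k,\mu^k) - L_{\rho}(x^{k+1},\mu^{k+1}) = \underbrace{\bigl[L_{\rho}(x^k,\mu^k) - L_{\rho}(x^{k+1},\mu^k)\bigr]}_{\text{primal}} + \underbrace{\bigl[L_{\rho}(x^{k+1},\mu^k) - L_{\rho}(x^{k+1},\mu^{k+1})\bigr]}_{\text{dual}},
\end{align*}
and bound each piece separately.

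For the primal piece, I would use two ingredients. First, since $\nabla_x \mathcal{K}_{\rho}(\cdot,\mu^k)$ is $L_{\mathcal{K}}$-Lipschitz (with $L_{\mathcal{K}}=L_f+\rho\|A^\top A\|$, which is independent of $\mu$ by affineness of $h$), the standard descent lemma yields
\[
\mathcal{K}_{\rho}(x^{k+1},\mu^k) \leq \mathcal{K}_{\rho}(x^k,\mu^k) + \langle \nabla_x\mathcal{K}_{\rho}(x^k,\mu^k), x^{k+1}-x^k\rangle + \frac{L_{\mathcal{K}}}{2}\|x^{k+1}-x^k\|^2.
\]
Second, the subproblem \eqref{eq: udd-alm-x} has a $\theta L_{\mathcal{K}}$-strongly convex objective (because $g$ is convex under Assumption \ref{assumption: udd} and the proximal term is $\theta L_{\mathcal{K}}$-strongly convex), so plugging $x=x^k$ into the strong convexity inequality for the minimizer $x^{k+1}$ gives
\[
g(x^{k+1})-g(x^k) \leq -\langle \nabla_x\mathcal{K}_{\rho}(x^k,\mu^k), x^{k+1}-x^k\rangle - \theta L_{\mathcal{K}}\|x^{k+1}-x^k\|^2.
\]
Adding the two and using $L_{\rho}=\mathcal{K}_{\rho}+g$ cancels the linear term and produces the coefficient $\theta L_{\mathcal{K}} - L_{\mathcal{K}}/2 = (2\theta-1)L_{\mathcal{K}}/2$, which is precisely the primal contribution in the lemma.

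For the dual piece, since $L_{\rho}(x,\cdot)$ is affine in $\mu$ with gradient $Ax-b$, I simply compute
\[
L_{\rho}(x^{k+1},\mu^k) - L_{\rho}(x^{k+1},\mu^{k+1}) = \langle \mu^k-\mu^{k+1}, Ax^{k+1}-b\rangle = \varrho\|Ax^{k+1}-b\|^2,
\]
where the last equality uses the UDD update \eqref{eq: udd-alm-dual}, namely $\mu^k-\mu^{k+1}=\varrho(Ax^{k+1}-b)$. Summing the two bounds yields the desired inequality. There is no serious obstacle here: the only subtlety is that, unlike the scaled dual update in SDD-ALM where the dual step contributes a nonnegative amount by strong concavity of the $\|\mu\|^2$ regularizer, the UDD step is an ascent-to-negative-curvature direction on the dual, so the gain $\varrho\|Ax^{k+1}-b\|^2$ comes directly (and exactly) from the affine dependence of $L_{\rho}$ on $\mu$, not from any curvature argument. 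This is why the lower bound of $L_{\rho}$ cannot be taken for granted and must be argued separately (which is the motivation for the subsequent regularity assumption at the primal limit point).
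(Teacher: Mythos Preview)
Your proposal is correct and follows essentially the same approach as the paper: the paper also splits into a primal descent piece (referencing the argument behind \eqref{eq: sdd-admm-descent-x} but invoking convexity of $g$ to upgrade the coefficient from $(\theta-1)/2$ to $(2\theta-1)/2$) and a dual piece computed exactly from the UDD update. Your explicit use of $\theta L_{\mathcal{K}}$-strong convexity of the subproblem objective is precisely what the paper means by ``using the fact that $g$ is convex,'' and your dual computation matches the paper's verbatim.
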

\begin{proof}
	Similar to \eqref{eq: sdd-admm-descent-x} and using the fact that $g$ is convex, the descent in $x$ is given as $ L_{\rho}(x^{k}, \mu^k) - L_{\rho}(x^{k+1}, \mu^k)  \geq \left(\frac{2\theta-1}{2}\right) L_{\cal{K}}\|x^{k+1}-x^k\|^2.$
%	\begin{align*}
%		 L_{\rho}(x^{k}, \mu^k) - L_{\rho}(x^{k+1}, \mu^k)  \geq \left(\frac{2\theta-1}{2}\right) L_{\cal{K}}\|x^{k+1}-x^k\|^2.
%	\end{align*}
	The change with respect to $\mu$ is given as
% 	\begin{align*}
%		  L_{\rho}(x^{k+1},\mu^{k+1})-L_{\rho}(x^{k+1},\mu^{k}) = \langle \mu^{k+1}-\mu^k, Ax^{k+1}-b) \rangle = -\varrho\|Ax^{k+1}-b\|^2,
%	\end{align*}
	 $L_{\rho}(x^{k+1},\mu^{k+1})-L_{\rho}(x^{k+1},\mu^{k}) = \langle \mu^{k+1}-\mu^k, Ax^{k+1}-b) \rangle = -\varrho\|Ax^{k+1}-b\|^2$,
	where the last equality is due to the unscaled dual descent update. Combining the inequality and the equality proves the claim.
\end{proof}

We then bound the dual residuals of iterates produced by UDD-ALM.
\begin{lemma}[Bound on Dual Residual in UDD-ALM] \label{lemma: udd bound on dual res}
	Suppose Assumption \ref{assumption: udd} holds. For all $k\in \N$, it holds that 
\begin{align*}
	 & \mathrm{dist}\left( \partial g(x^{k+1}), -\nabla f(x^{k+1})- A^\top \mu^{k+1} \right) \\
	 \leq & (\theta+1) L_{\cal{K}} \|x^{k+1}-x^k\|  + (\rho+\varrho) \|A\|\|A x^{k+1}-b\|.
\end{align*}	
\end{lemma}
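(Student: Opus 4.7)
The plan is to start from the first-order optimality condition of the proximal gradient subproblem \eqref{eq: udd-alm-x}. Since $\theta L_{\mathcal{K}} \|\cdot - x^k\|^2/2$ is smooth and $g$ is convex (in particular, the minimum is attained and standard subdifferential calculus applies), there exists $s^{k+1}\in \partial g(x^{k+1})$ such that
\begin{equation*}
  s^{k+1} + \nabla f(x^k) + A^\top\bigl(\mu^k + \rho(Ax^k-b)\bigr) + \theta L_{\mathcal{K}} (x^{k+1}-x^k) = 0.
\end{equation*}
By definition of the distance, it suffices to bound $\|s^{k+1} - (-\nabla f(x^{k+1}) - A^\top \mu^{k+1})\|$.

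The next step is a bookkeeping substitution: use the unscaled dual descent update \eqref{eq: udd-alm-dual}, $\mu^{k+1} = \mu^k - \varrho(Ax^{k+1}-b)$, to rewrite $A^\top \mu^{k+1}$ in terms of $A^\top \mu^k$, and telescope the $\rho A^\top(Ax^k-b)$ term as
\begin{equation*}
  \rho A^\top(Ax^k-b) = \rho A^\top(Ax^{k+1}-b) - \rho A^\top A (x^{k+1}-x^k).
\end{equation*}
After cancellations, the quantity inside the norm reduces to
\begin{equation*}
  \bigl(\nabla f(x^{k+1})-\nabla f(x^k)\bigr) - \theta L_{\mathcal{K}} (x^{k+1}-x^k) + \rho A^\top A(x^{k+1}-x^k) - (\rho+\varrho)A^\top(Ax^{k+1}-b).
\end{equation*}

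From here I would apply the triangle inequality, use $L_f$-Lipschitzness of $\nabla f$ to bound the first difference by $L_f \|x^{k+1}-x^k\|$, bound $\|\rho A^\top A\| \leq \rho \|A^\top A\|$ and $\|A^\top\| \leq \|A\|$ for the remaining matrix terms, and finally invoke $L_{\mathcal{K}} = L_f + \rho\|A^\top A\|$ to collect the coefficient of $\|x^{k+1}-x^k\|$ as $L_f + \rho \|A^\top A\| + \theta L_{\mathcal{K}} = (\theta+1)L_{\mathcal{K}}$. The coefficient of $\|Ax^{k+1}-b\|$ is already $(\rho+\varrho)\|A\|$, yielding exactly the claimed bound.

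This proof is essentially mechanical; no estimate is delicate. The only small subtlety is recognizing the right way to split $\rho A^\top(Ax^k-b)$ so that the $(\rho+\varrho)$ coefficient emerges naturally when combined with the contribution of $A^\top(\mu^{k+1}-\mu^k) = -\varrho A^\top(Ax^{k+1}-b)$, and ensuring the residual $\rho A^\top A(x^{k+1}-x^k)$ is absorbed into the $\|x^{k+1}-x^k\|$ term through the definition of $L_{\mathcal{K}}$.
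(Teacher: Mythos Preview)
Your proof is correct and follows essentially the same approach as the paper: both start from the optimality condition of \eqref{eq: udd-alm-x}, substitute $\mu^{k+1}=\mu^k-\varrho(Ax^{k+1}-b)$, split $\rho A^\top(Ax^k-b)$ the same way to extract the $(\rho+\varrho)A^\top(Ax^{k+1}-b)$ term plus a $\rho A^\top A(x^{k+1}-x^k)$ residual, and then collect coefficients via $L_{\mathcal{K}}=L_f+\rho\|A^\top A\|$. The only cosmetic difference is that the paper names the residual vector $\xi^{k+1}$ and states $\xi^{k+1}\in \nabla f(x^{k+1})+\partial g(x^{k+1})+A^\top\mu^{k+1}$, whereas you write the same identity in terms of $s^{k+1}\in\partial g(x^{k+1})$.
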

\begin{proof}
    {The claim follows from the optimality of of $x^{k+1}$ in \eqref{eq: udd-alm-x}, the fact that $\nabla_x \mathcal{K}_{\rho}$ is Lipschitz, and straightforward derivations.}
% 	The update of $x^{k+1}$ in \eqref{eq: udd-alm-x} gives
% %	\begin{align*}
% %		0 \in  \nabla f(x^k) + \partial g(x^{k+1}) + A^\top (\mu^k + \rho (Ax^k-b)) + \theta L_\mathcal{K} (x^{k+1} -x^k).
% %	\end{align*}
% %	It follows that
% 	$\xi^{k+1} \in 	\nabla f(x^{k+1}) + \partial g(x^{k+1}) + A^\top \mu^{k+1}$, where
% 	\begin{align*}
% 		\xi^{k+1} := & (\nabla f(x^{k+1}) - \nabla f(x^{k})) - \theta L_\mathcal{K}(x^{k+1}-x^k) -\varrho A^\top (Ax^{k+1}-b)-\rho A^\top (Ax^{k}-b)\\
% 		= &  (\nabla f(x^{k+1}) - \nabla f(x^{k})) - \theta L_\mathcal{K} (x^{k+1}-x^k) -(\rho+\varrho) A^\top (Ax^{k+1}-b) \\
% 		  & + \rho A^\top A(x^{k+1}-x^k)
% 	\end{align*}
% 	As a result, we have $\|\xi^{k+1}\| \leq (\theta+1) L_{\cal{K}} \|x^{k+1}-x^k\|  + (\rho+\varrho)\|A\|\|A x^{k+1}-b\|.$
% 	% \begin{align*}
% 	% 	\|\xi^{k+1}\| \leq (\theta+1) L_{\cal{K}} \|x^{k+1}-x^k\|  + (\rho+\varrho)\|A\|\|A x^{k+1}-b\|.
% 	% \end{align*}
% 	This completes the proof.
\end{proof}

Lemma \ref{lemma: udd one step descent} suggests that values of the augmented Lagrangian function form a non-increasing sequence. We aim to show that this sequence is actually bounded from below if certain regularity condition is satisfied.
\begin{definition}[Modified Robinson's Condition]\label{def: robinson}
	We say $x\in X=\mathrm{dom}~g$ satisfies the modified Robinson's condition if $\{ Ad~|~d \in T_{X}(x) \} = \R^m$,
%	\begin{align*}
%		\{ Ad~|~d \in T_{X}(x) \} = \R^m,
%	\end{align*}
	where $T_{X}(x)$ denotes the tangent cone of $X$ at $x$:
	\begin{align*}
		T_{X}(x) = \left\{d\in \R^n~|~d = \lim_{k\rightarrow\infty}\frac{x^k-x}{\tau_k}, x^k\rightarrow x, \tau_k\downarrow 0, \{x_k\}_{k\in \N}\subseteq X \right\}.
	\end{align*}
\end{definition}
The above definition is slightly different from the standard Robinson's condition, e.g., in \cite[Section 3.3.2]{ruszczynski2011nonlinear}: we do not require $x$ to satisfy $Ax=b$ in Definition \ref{def: robinson}. 
Despite this difference, \cite[Lemma 3.16]{ruszczynski2011nonlinear} still gives a sufficient condition: $A$ has full row rank and $x + \mathrm{Null}(A) \cap \mathrm{int}~X \neq \emptyset$, where $\mathrm{Null}(A)$ denotes the null space of $A$ and $\mathrm{int}~X $ denotes the interior of $X$. This modified Robinson's condition has been adopted to ensure certain boundedness condition on the dual sequence and verified in specific applications \cite{hajinezhad2019perturbed,shi2020penalty1,shi2020penalty2}. Since the techniques are not new, we present the following lemma in the context of problem \eqref{eq: lc problem} {while skipping the proof.}
\begin{lemma}[Existence of Dual Limit Point]\label{lemma: exist of dual limit point}
	Suppose Assumption \ref{assumption: udd} holds. Let $x^*\in X$ be a limit point of the sequence $\{x_k\}_{k\in \N}$ generated by  UDD-ALM, and $\{x^{k_r}\}_{r\in \N}$ be the subsequence convergent to $x^*$. If $x^*$ satisfies the modified Robinson's condition, then $\{\mu^{k_r} \}_{r\in \R^n}$ has a bounded subsequence and hence a limit point $\mu^*$.
\end{lemma}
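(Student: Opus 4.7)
My approach is to argue by contradiction: suppose that no subsequence of $\{\mu^{k_r}\}$ is bounded, so after passing to a subsequence $\|\mu^{k_r}\|\to +\infty$. Extract a further subsequence along which $\bar{\mu}^{k_r} := \mu^{k_r}/\|\mu^{k_r}\|$ converges to some unit vector $\bar{\mu}$. The goal is to use the modified Robinson's condition at $x^*$ to force $\bar{\mu}=0$, contradicting $\|\bar{\mu}\|=1$.

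The key tool is the first-order optimality condition of the proximal step \eqref{eq: udd-alm-x} that produced $x^{k_r}$ (for $k_r \ge 1$), which, as in the derivation of Lemma \ref{lemma: udd bound on dual res}, yields
\begin{align*}
    \zeta^{k_r} \in \nabla f(x^{k_r}) + \partial g(x^{k_r}) + A^\top \mu^{k_r},
\end{align*}
with $\|\zeta^{k_r}\|$ uniformly bounded in $r$ (the bound depends on $L_{\mathcal{K}}$, $\|A\|$, $\varrho$, $\rho$ and $\mathrm{diam}(X)$, all finite by compactness of $X$ and Assumption \ref{assumption: udd}). Because $g = g_0 + \delta_X$ with $g_0$ real-valued convex on $\R^n$ and $X$ convex, the convex sum rule gives $\partial g(x) = \partial g_0(x) + N_X(x)$, and I split the inclusion into $s^{k_r} \in \partial g_0(x^{k_r})$ and $n^{k_r} \in N_X(x^{k_r})$ with
\begin{align*}
    s^{k_r} + n^{k_r} = \zeta^{k_r} - \nabla f(x^{k_r}) - A^\top \mu^{k_r}.
\end{align*}

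Next I divide through by $\|\mu^{k_r}\|$. Continuity of $\nabla f$ on the compact set $X$ bounds $\nabla f(x^{k_r})$; since $g_0$ is a finite-valued convex function, $\partial g_0$ is locally bounded, so $s^{k_r}$ stays bounded along $\{x^{k_r}\}\subseteq X$; together with the boundedness of $\zeta^{k_r}$, every term on the right-hand side except $-A^\top \mu^{k_r}$ is $o(\|\mu^{k_r}\|)$. Hence
\begin{align*}
    \frac{n^{k_r}}{\|\mu^{k_r}\|} \longrightarrow -A^\top \bar{\mu}.
\end{align*}
Because $N_X(x^{k_r})$ is a cone, $n^{k_r}/\|\mu^{k_r}\|$ itself lies in $N_X(x^{k_r})$; by outer semicontinuity of the normal cone map at a closed convex set, combined with $x^{k_r} \to x^*$, we conclude $-A^\top \bar{\mu} \in N_X(x^*)$.

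Finally, the modified Robinson's condition closes the argument. The inclusion $-A^\top \bar{\mu} \in N_X(x^*)$ is equivalent to $\langle \bar{\mu}, Ad\rangle \ge 0$ for every $d \in T_X(x^*)$. Since $\{Ad : d \in T_X(x^*)\} = \R^m$, this forces $\langle \bar{\mu}, v\rangle \ge 0$ for every $v \in \R^m$, which can only hold if $\bar{\mu}=0$, contradicting $\|\bar{\mu}\|=1$. Therefore some subsequence of $\{\mu^{k_r}\}$ must be bounded, and Bolzano--Weierstrass then delivers a limit point $\mu^*$. The main obstacle is the bookkeeping around the normalization: one must verify carefully that, after dividing by $\|\mu^{k_r}\|$, all non-dual terms vanish, which relies crucially on the compactness of $X$ and the regularity properties encoded in Assumption \ref{assumption: udd}.
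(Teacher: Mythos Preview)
Your proof is correct and follows essentially the same contradiction strategy as the paper: normalize $\mu^{k_r}$, pass to a limit $\bar{\mu}$, and use the modified Robinson's condition to force $\bar{\mu}=0$. The only cosmetic difference is that you phrase the limiting relation as $-A^\top\bar{\mu}\in N_X(x^*)$ via outer semicontinuity of the normal-cone map, whereas the paper writes the equivalent variational inequality $\langle A^\top\bar{\mu},x-x^*\rangle\ge 0$ for all $x\in X$; your use of the tangent-cone characterization $A\,T_X(x^*)=\R^m$ at the end is arguably a touch cleaner than the paper's choice of a specific direction.
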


\begin{theorem}\label{thm: udd convergence}
	Suppose Assumption \ref{assumption: udd} holds. Let $x^*$ be a limit point of the sequence $\{x^{k}\}_{k\in \N}$ generated by UDD-ALM that satisfies the modified Robinson's condition. Then the following statements hold.
	\begin{enumerate}
		\item (Asymptotic Convergence) The point $x^*$ is a stationary point of problem \eqref{eq: lc problem}.
		\item (Iteration Complexity) Let $\epsilon>0$. Define constants
		\begin{align*}
			\delta_1 := \min \left \{\frac{(2\theta-1)L_{\cal{K}}}{2}, \varrho \right\}, \quad \delta_2 := (\theta+1) L_{\cal{K}} + (\rho+\varrho)\|A\|. 
		\end{align*}
		UDD-ALM finds an $\epsilon$-stationary solution in at most $K$ iterations where 
		\begin{align}\label{eq: udd iter complex}
			K \leq \left\lceil \frac{\max\{1, \delta_2\}^2(L_{\rho}(x^0, \mu^0) - f(x^*)-g(x^*))}{\delta_1 \epsilon^2} \right\rceil = \mathcal{O}(\epsilon^{-2}).
		\end{align}
	\end{enumerate}
\end{theorem}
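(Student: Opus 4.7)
The plan is to use the non-increasing property of $\{L_\rho(x^k,\mu^k)\}$ from Lemma \ref{lemma: udd one step descent} together with the existence of a bounded dual subsequence from Lemma \ref{lemma: exist of dual limit point} to pin down a \emph{uniform} lower bound on the augmented Lagrangian. That lower bound drives both the asymptotic analysis (through a summability/successive-differences argument) and the iteration complexity (through a telescoping/pigeonhole argument).

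First I would set things up as follows. Let $\{x^{k_r}\}_{r\in\N}$ be the subsequence converging to $x^*$. By Lemma \ref{lemma: exist of dual limit point}, along a sub-subsequence (still denoted $\{k_r\}$) we have $\mu^{k_r}\to\mu^*$, so $L_\rho(x^{k_r},\mu^{k_r})\to L_\rho(x^*,\mu^*)$ by continuity. Since Lemma \ref{lemma: udd one step descent} shows $\{L_\rho(x^k,\mu^k)\}$ is non-increasing, the whole sequence converges to $L^*:=L_\rho(x^*,\mu^*)$ and in particular $L_\rho(x^k,\mu^k)\ge L^*$ for all $k$. Telescoping Lemma \ref{lemma: udd one step descent} over $k=0,1,\dots$ yields
\begin{align*}
\sum_{k=0}^{\infty}\Big[\tfrac{(2\theta-1)L_{\mathcal K}}{2}\|x^{k+1}-x^k\|^2+\varrho\|Ax^{k+1}-b\|^2\Big]\le L_\rho(x^0,\mu^0)-L^*<\infty,
\end{align*}
so $\|x^{k+1}-x^k\|\to 0$ and $\|Ax^{k+1}-b\|\to 0$. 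The latter forces $Ax^*-b=0$, hence $L^*=f(x^*)+g(x^*)$.

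For part (1), from $\|Ax^{k+1}-b\|\to 0$ and the UDD update \eqref{eq: udd-alm-dual} we also get $\|\mu^{k+1}-\mu^k\|\to 0$; combined with $x^{k_r}\to x^*$ and $\mu^{k_r}\to\mu^*$ this gives $x^{k_r+1}\to x^*$ and $\mu^{k_r+1}\to\mu^*$. Lemma \ref{lemma: udd bound on dual res} then implies that the dual residual at iterate $k_r+1$ tends to zero, so by the outer semicontinuity of $\partial g$ we obtain $0\in\nabla f(x^*)+\partial g(x^*)+A^\top\mu^*$. Together with $Ax^*=b$, this certifies that $x^*$ is a stationary point of \eqref{eq: lc problem}.

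For part (2), I would exploit the \emph{a priori} bound $L_\rho(x^k,\mu^k)\ge f(x^*)+g(x^*)$ just established, denote $\Delta L:=L_\rho(x^0,\mu^0)-f(x^*)-g(x^*)$, and apply Lemma \ref{lemma: udd one step descent} to get
\begin{align*}
\delta_1\sum_{k=0}^{K-1}\Big(\|x^{k+1}-x^k\|^2+\|Ax^{k+1}-b\|^2\Big)\le \Delta L.
\end{align*}
By pigeonhole there exists $\bar k\in\{0,\dots,K-1\}$ with $\|x^{\bar k+1}-x^{\bar k}\|^2+\|Ax^{\bar k+1}-b\|^2\le \Delta L/(\delta_1 K)$. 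The primal residual at $\bar k+1$ is then at most $\sqrt{\Delta L/(\delta_1 K)}$, while Lemma \ref{lemma: udd bound on dual res} combined with the elementary inequality $a\alpha+b\beta\le(a+b)\max\{\alpha,\beta\}$ bounds the dual residual by $\delta_2\sqrt{\Delta L/(\delta_1 K)}$. Requiring $\max\{1,\delta_2\}\sqrt{\Delta L/(\delta_1 K)}\le\epsilon$ rearranges to \eqref{eq: udd iter complex}.

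The main obstacle is the first step: establishing a uniform lower bound on the augmented Lagrangian values. Unlike in SDD-ADMM, where the scaling in the dual update furnishes a quadratic cushion $\tfrac{\omega}{2\rho}\|\mu\|^2$ that tames unbounded duals (cf.\ Lemma \ref{lemma: admm-bound-primal-res}), UDD has no such buffer and the linear term $\langle\mu,Ax-b\rangle$ could in principle drive $L_\rho$ to $-\infty$. This is precisely where the modified Robinson's condition at $x^*$ is indispensable: through Lemma \ref{lemma: exist of dual limit point} it guarantees a bounded dual cluster point, without which the entire monotone-descent framework would collapse.
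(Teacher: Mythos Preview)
Your proposal is correct and follows essentially the same route as the paper: use Lemma \ref{lemma: exist of dual limit point} to extract a convergent dual subsequence, combine with the monotonicity of Lemma \ref{lemma: udd one step descent} to lower-bound $\{L_\rho(x^k,\mu^k)\}$ by $L_\rho(x^*,\mu^*)=f(x^*)+g(x^*)$, telescope, and then invoke Lemma \ref{lemma: udd bound on dual res} for both the asymptotic stationarity and the pigeonhole complexity bound. The only cosmetic difference is that for part~(1) you work at indices $k_r+1$ (using $\|\mu^{k+1}-\mu^k\|\to 0$ to shift), whereas the paper applies Lemma \ref{lemma: udd bound on dual res} directly at indices $k_r$; both are equivalent.
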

\begin{proof}
	Let $\{x^{k_r}\}_{r\in \N}$ be the subsequence convergent to $x^*$. By Lemma \ref{lemma: exist of dual limit point}, we may assume $\mu^{k_r}\rightarrow \mu^* \in \R^m$ as $r\rightarrow \infty$ without loss of generality. Consequently, $L_{\rho}(x^{k_r}, \mu^{k_r})\rightarrow L_{\rho}(x^*, \mu^*)$ due to the continuity of the augmented Lagrangian function over $X$. By Lemma \ref{lemma: udd one step descent}, the sequence $\{L_{\rho}(x^k, \mu^k)\}_{r\in \N}$ is non-increasing, so the whole sequence is bounded from below by $L_{\rho}(x^*, \mu^*)$. Summing the inequality claimed in  Lemma \ref{lemma: udd one step descent} from $k=0$ to some positive integer $K-1$, we have
	\begin{align}\label{eq: telescope}
		& \min \left \{\frac{(2\theta-1)L_{\cal{K}}}{2}, \varrho \right\} 	 \sum_{k=0}^{K-1} \|x^{k+1}-x^k\|^2 + \|Ax^{k+1}-b\|^2 \notag \\
		\leq & \sum_{k=0}^{K-1} \frac{(2\theta-1)L_{\cal{K}}}{2}\|x^{k+1}-x^k\|^2 + \varrho \|Ax^{k+1}-b\|^2 \leq  L_{\rho}(x^0,\mu^0) -L_{\rho}(x^*,\mu^*). 
	\end{align}
	Now we are ready to prove the two claims. 
	\begin{enumerate}
		\item Let $K\rightarrow \infty$ in \eqref{eq: telescope} and focusing on the subsequence $\{x^{k_r}, \mu^{k_r}\}_{r\in\N}$, we see $\lim_{r\rightarrow \infty} \max\{ \|x^{k_r}-x^{k_r-1}\|, \|A x^{k_r}-b\| \} = 0$,
%		\begin{align*}
%			\lim_{r\rightarrow \infty} \max\{ \|x^{k_r}-x^{k_r-1}\|, \|A x^{k_r}-b\| \} = 0,
%		\end{align*}
		which immediately follows that $\|Ax^*-b\| = \lim_{r\rightarrow \infty}\|Ax^{k_r}-b\|  = 0.$
%		\begin{align*}
%			\|Ax^*-b\| = \lim_{r\rightarrow \infty}\|Ax^{k_r}-b\|  = 0.
%		\end{align*}
		Moreover, by Lemma \ref{lemma: udd bound on dual res}, 
		\begin{align*}
			& \mathrm{dist}\left(  -\nabla f(x^{*})- A^\top \mu^{*}, \partial g(x^{*}) \right)\leq  \lim_{r\rightarrow \infty} \mathrm{dist}\left( -\nabla f(x^{k_r})- A^\top \mu^{k_r},  \partial g(x^{k_r}) \right) \\
			\leq & \lim_{r\rightarrow \infty}    (\theta+1) L_{\cal{K}} \|x^{k_r}-x^{k_r-1}\|  + (\rho+\varrho) \|A\|\|A x^{k_r}-b\| = 0.
		\end{align*}
		This suggests that $x^*$ is a stationary point of problem \eqref{eq: lc problem}.
		\item Since $Ax^*-b=0$, we have $L_{\rho}(x^*, \mu^*) = f(x^*)+ g(x^*)$. By \eqref{eq: telescope}, there exists an index $0\leq \bar{k} \leq K-1$ such that 
		\begin{align*}
			\max\{ \|x^{\bar{k}+1}-x^{\bar{k}}\|, \|Ax^{\bar{k}+1}-b\|\}	 \leq \left(\frac{L_{\rho}(x^0, \mu^0) - f(x^*)-g(x^*)}{\delta_1 K}\right)^{1/2}.
		\end{align*}
		By Lemma \ref{lemma: udd bound on dual res} and the above inequality,
		\begin{align*}
			& \max\left\{\mathrm{dist}\left( \partial g(x^{\bar{k}+1}), -\nabla f(x^{\bar{k}+1})- A^\top \mu^{\bar{k}+1} \right), \|Ax^{\bar{k}+1}-b\|\right\}	 \\
			% \leq &\max \left \{ (\theta+1) L_{\cal{K}}  \|x^{\bar{k}+1}-x^{\bar{k}}\|  + (\rho+\varrho) \|A\|\|A x^{\bar{k}+1}-b\|, \|Ax^{\bar{k}+1}-b\|  \right\}\\
			% \leq & \max\{1,\delta_2\}  \max\{ \|x^{\bar{k}+1}-x^{\bar{k}}\|, \|Ax^{\bar{k}+1}-b\|\} \\
			\leq & \max\{1,\delta_2\}  \left(\frac{L_{\rho}(x^0, \mu^0) - f(x^*)-g(x^*)}{\delta_1 K}\right)^{1/2} \leq \epsilon,
		\end{align*}	
		where the last inequality holds by the claimed upper bound of $K$ in \eqref{eq: udd iter complex}. 
	\end{enumerate}
\end{proof}
\begin{remark}
	We give some remarks on UDD-ALM. 
	\begin{enumerate}
		\item Different from the $\mathcal{O}(\epsilon^{-2})$ established in \cite{zhang2020global, zhang2020proximal}, Theorem \ref{thm: udd convergence} relies on the modified Robinson's condition at the limit point of iterates produced by UDD-ALM. Though assuming a certain constraint qualification at the limit point is common in nonlinear programs, this specific condition may not be satisfied by general instances of \eqref{eq: lc problem}.  
		\item In Appendix \ref{sec: udd-nonlinear}, we extend the iteration complexity result to handle nonconvex $g$ and nonlinear $h$ by assuming a stronger subproblem oracle. 
	\end{enumerate}
\end{remark}

The value of  $\varrho$ deserves more attentions in deriving the $\mathcal{O}(\epsilon^{-2})$ complexity in Theorem \ref{thm: udd convergence}. We treat $\varrho$ as a constant in our analysis and do not impose explicit requirements. However, it is observed that a larger $\varrho$ usually leads to iterates staying on the boundary of $X$, and hence the limit point is more likely to violate the modified Robinson's condition. As we illustrate in the Section \ref{section: udd example}, the behavior of UDD-ALM is very sensitive to the choice of $\varrho$. For certain instances, the numerical value of $\varrho$ needs to be even smaller than $\epsilon$ in order for UDD-ALM to exhibit convergence. In this case, the $\mathcal{O}(\epsilon^{-2})$ complexity may not be practically informative. We share more empirical observations in Section \ref{section: udd example}.

\section{Numerical Experiments}\label{section: examples}
\subsection{{SDD-ALM for Nonconvex QCQP}}
{
In this section, we consider the following nonconvex quadratically constrained quadratic program
\begin{align}\label{eq: qcqp}
	\min_{x\in \R^n}~\{f(x) := x^\top Q x + q^\top x~|~h(x):= x^\top Bx- 1 = 0, \|x\| \leq r\}, 
\end{align}
and compare with the iALM proposed in \cite{li2021rate}. Let $g(x) = \delta_{\{x|\|x\|\leq r\}}(x)$, whose projection operator can be computed explicitly. We generate data as follows: first create $\tilde{Q} \in \R^{n\times n}$ with standard Gaussian entries, and set $Q = 0.5 (\tilde{Q} + \tilde{Q}^\top)$; generate $\bar{B}$ in the same was as $Q$, and set $B = \bar{B} + (\|\bar{B}\|+1)I_{n}$, where $I_n$ denotes the identity matrix; finally we set $q$ to be the zero vector and $r = n/10$. For SDD-ADMM, we choose $(\omega, \theta, \tau) = (4,2,1)$ and simply set $\rho = 10n$. For iALM, we limit the number of outer-level updates by 10, and the penalty used in each outer level is $\beta_k = \beta_0 \sigma^k$, where $\beta_0 = 1$ and $\sigma = 2(\rho/\beta_0)^{1/10}$, so that $\beta_k$ in iALM should be able to quickly catch up the SDD-ALM penalty $\rho$; we set the input tolerance to the inner-level APG and middle-level iPPM to be 1e-3.
% and restrict the total number of APG iterations in each iPPM by 10,000; 
% all other implementation details follow the description in \cite{li2021rate}. 
For both algorithms, we first generate a vector with standard Gaussian entries, then scale it to get $x^0$ so that $\|h(x^0)\|= 0.5/\sqrt{\rho}$. }

{For each run of an algorithm, we record the primal residual ``\texttt{pres}" (measured by $\|h(x^{k+1})\|$), dual residual ``\texttt{dres}" (measured by $\|x^{k+1}-x^k\|$), the iteration index ``\texttt{iter}" when both \texttt{pres} and \texttt{dres} drop below 1e-3 for the first time, as well as the wall clock time ``\texttt{time}" over 100,000 proximal gradient iterations; if either \texttt{pres} or \texttt{dres} does not drop below 1e-3, we record their values where the sum of \texttt{pres} and \texttt{dres} is the minimum, and set \texttt{iter}=100,000. For $n\in \{100, 200, 300\}$, we generate 5 instances and report the average metrics in Table \ref{table1}. For $n\in \{100, 200\}$, SDD-ALM reduces \texttt{pres} below 1e-3, while for $n=300$, SDD-ALM achieves a slightly better \texttt{pres}. In constrast, iALM maintains a smaller \texttt{dres} in all runs. On average, SDD-ALM takes less time to perform 100,000 proximal gradient updates. }
\begin{table}[h!]
\caption{Averaged Metrics of SDD-ALM and iALM \cite{li2021rate}}\label{table1}
\begin{center}
\begin{tabular}{lllllllll}
\toprule
    & \multicolumn{4}{l}{SDD-ALM}        & \multicolumn{4}{l}{iALM}             \\
 \hline 
$n$   & \texttt{pres} & \texttt{dres} & \texttt{iter} & \texttt{time} & \texttt{pres} & \texttt{dres} & \texttt{iter} &\texttt{time} \\
100 & 1.00e-3    & 2.19e-8 & 16,158  & 13.97 & 1.03e-2 & 2.68e-10 & 100,000 & 25.73  \\
200 & 1.00e-3    & 3.02e-8 & 81,729  & 37.08 & 1.03e-2 & 6.51e-13 & 100,000 & 73.37  \\
300 & 3.11e-3    & 2.97e-9 & 100,000 & 69.43 & 8.86e-3 & 6.78e-13 & 100,000 & 147.29 \\
\bottomrule
\end{tabular}
\end{center}
\end{table}

{For the generated instances, we also observe that iALM usually reduces the objective value faster than SDD-ALM, while SDD-ALM seems to converge to solutions with better qualities in the long run. The objective trajectories of two instances are plotted in Fig. \ref{figure: qcqp_obj}.}
\begin{figure*}[h!]
\center{
\begin{tabular}{@{}c@{}}
	\includegraphics[width=.42\linewidth]{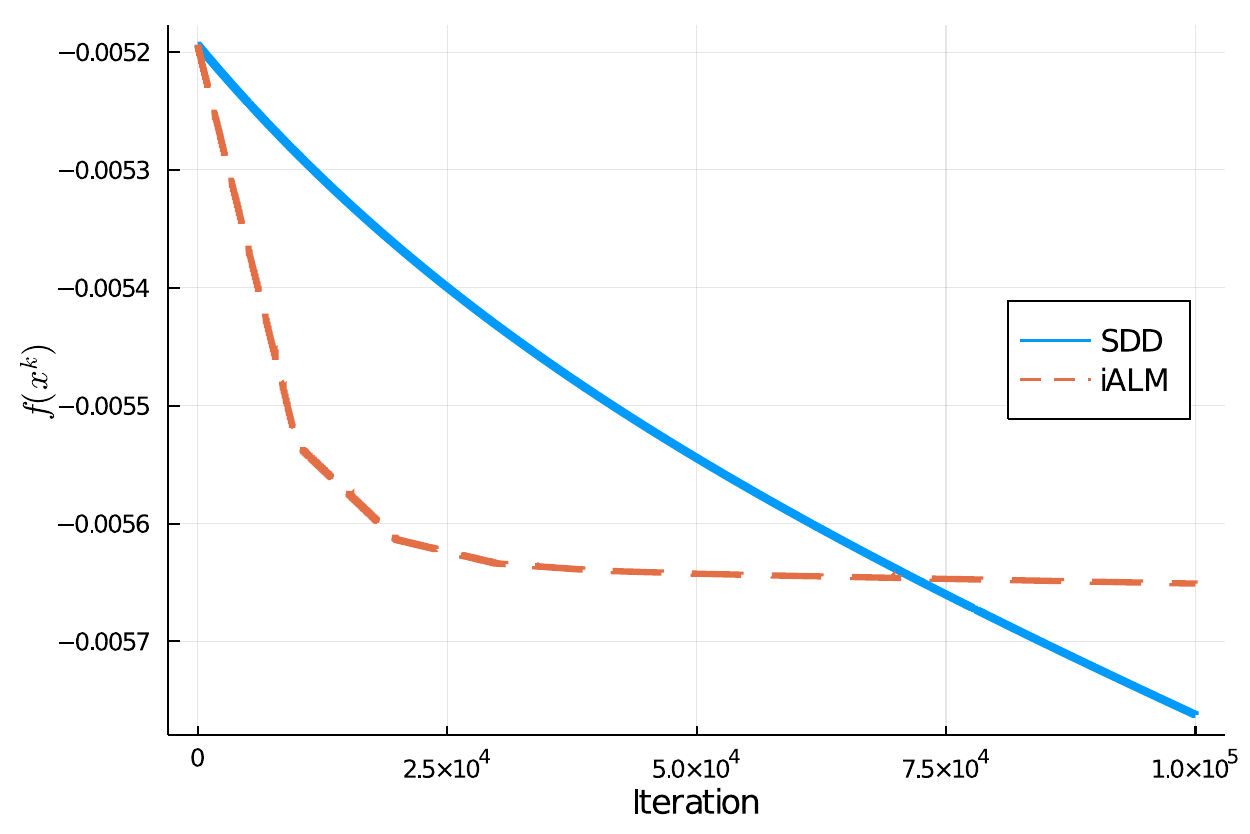} \\[\abovecaptionskip]
	(a) An instance with $n=200$
	\end{tabular}
	\begin{tabular}{@{}c@{}}
	\includegraphics[width=.42\linewidth]{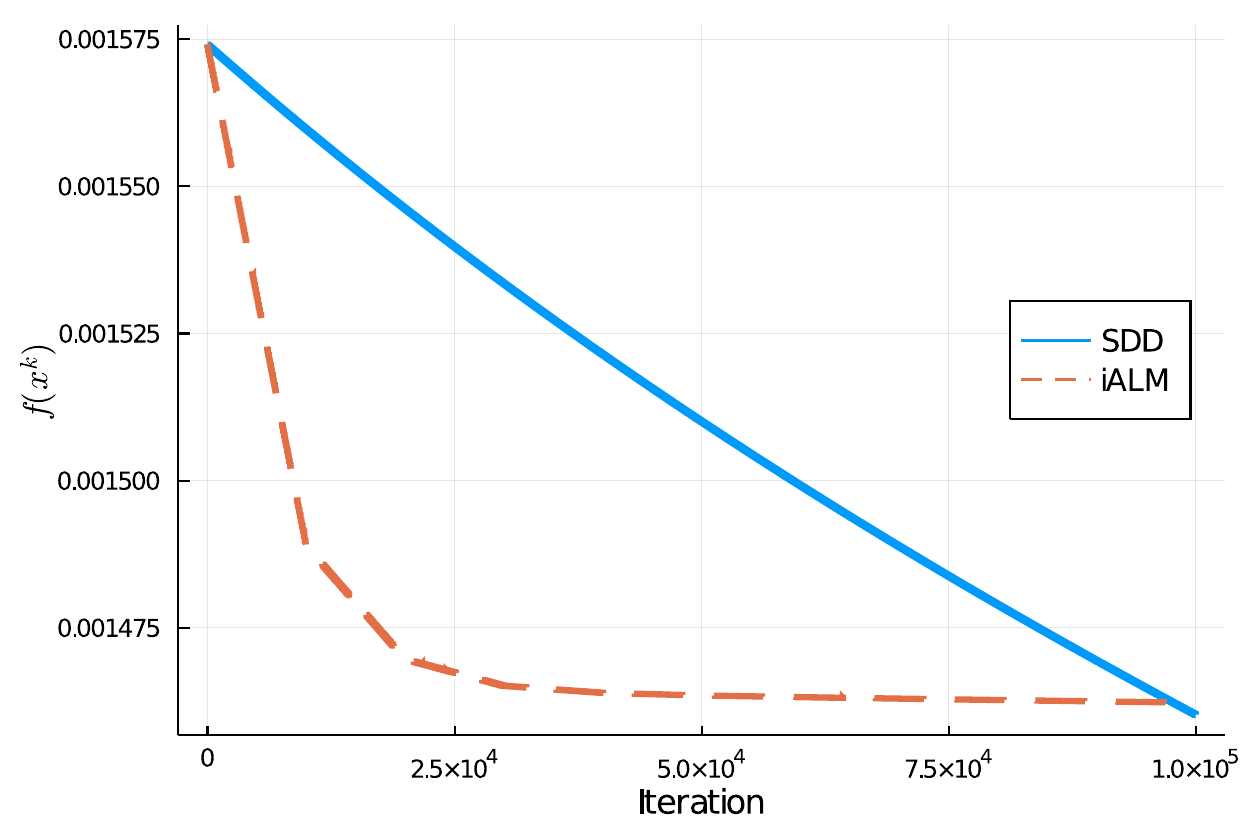} \\[\abovecaptionskip]
	(b) An instance with $n=300$
\end{tabular}}
\caption{Objective Trajectories of SDD-ALM and iALM \cite{li2021rate}} \label{figure: qcqp_obj} \label{fig: compare proximal ALM}
\end{figure*}

\subsection{{SDD-ADMM for Robust Tensor PCA}}
{In this section, we test SDD-ADMM on the robust tensor PCA problem, and compare with the ADMM-g algorithm proposed in \cite{jiang2019structured}. Given a tensor $\mathcal{T} \in \R^{I_1\times I_2 \times I_3}$, the goal is to decompose $\mathcal{T}$ as $\mathcal{Z} + \mathcal{E} + \mathcal{B}$, where $\mathcal{E}$ has a low rank, $\mathcal{E}$ is sparse, and $\mathcal{B}$ represents a small noise. 
% Moreover, when a decomposition with low CP rank is perferred, 
The problem can be casted as the following multiblock problem:
\begin{align}\label{eq: tensor pca}
	\min_{A, B, C, \mathcal{Z}, \mathcal{E}, \mathcal{B}} \{ \|\mathcal{Z}-\llbracket A, B, C\rrbracket \|^2 + \alpha \|\mathcal{E}\|_1 + \alpha_N \|\mathcal{B}\|_F^2 ~|~ \mathcal{Z} + \mathcal{E} + \mathcal{B} = \mathcal{T}\}.
\end{align}
where $A \in \mathbb{R}^{I_{1} \times R}, B \in \mathbb{R}^{I_{2} \times R}, C \in \mathbb{R}^{I_{3} \times R}$, $R$ is an estimate of the true CP-rank,  $\llbracket A, B, C \rrbracket$ denotes the summation of column-wise outer products of $A$, $B$, and $C$, and $\|\cdot\|_F$ denotes the Forbenius norm. See \cite{jiang2019structured} and references therein for a detailed background of the problem. Following standard notations, the Khatri-Rao product, the Hadamard product, and the soft shrinkage operator are denoted by $\odot$, $\circ$, and $\mathbf{S}$, respectively; we use $\mathcal{Z}_{(i)}$ to denote the mode-$i$ unfolding of a tensor $\mathcal{Z}$, and use $\mathbf{0}^{I_1 \times I_2 \times I_3}$ to denote the zero tensor. An adaptive SDD-ADMM tailored to problem \eqref{eq: tensor pca} performs the following updates. 
\begin{algorithm}[H]
	\caption{: \texttt{Adaptive SDD-ADMM} for problem \eqref{eq: tensor pca}} 
	\begin{algorithmic}[1]
		\State \textbf{Input} $\rho,p, \omega, \tau, \gamma,  \overline{\rho}, k_\mathrm{interval} > 0$;
		\State \textbf{Initialize} $A^0, B^0,C^0, \mathcal{Z}^0$ $\mathcal{E}^0$, $\mathcal{B}^0$; $\mu^0 = \mathbf{0}^{I_1\times I_2 \times I_3}$;
		\For{$k=0,1,2,\cdots$}
%		\State set $\eta_k = \theta_1 \mathrm{Lip}(\mu^k,\rho)$;
		\State $A^{k+1} =[(Z)^k_{(1)}(C^k \odot B^k) + 0.5 p A^k][((C^k)^\top C^k)\circ((B^k)^\top B^k) +0.5p I_R]^{-1}$; \label{line4}
		\State $B^{k+1} =[(Z)^k_{(2)}(C^k \odot A^k) + 0.5 p B^k][((C^k)^\top C^k)\circ((A^k)^\top A^k) +0.5p I_R]^{-1}$;s		\State $C^{k+1} =[(Z)^k_{(3)}(B^k \odot A^k) + 0.5 p C^k][((B^k)^\top B^k)\circ((C^k)^\top C^k) +0.5p I_R]^{-1}$;
		\State $\mathcal{E}^{k+1}_{(1)} = \mathbf{S}\left( \frac{\rho}{\rho+p}(\mathcal{T}_{(1)} - \mu^{k}_{(1)}/\rho - \mathcal{B}^k_{(1)} - \mathcal{Z}^{k}_{(1)} ) + \frac{p}{\rho+p} \mathcal{E}^k_{(1)}, \frac{\alpha}{\rho + p} \right)$;
		\State \small{$\mathcal{Z}_{(1)}^{k+1}= \frac{1}{2+2p+\rho}( 2 A^{k+1}\left(C^{k+1} \odot B^{k+1}\right)^{\top}+2p\mathcal{Z}^k_{(1)}-\mu_{(1)}^{k}-\rho\left(\mathcal{E}_{(1)}^{k+1}+\mathcal{B}_{(1)}^{k}-\mathcal{T}_{(1)}\right))$;}
		\State $\mathcal{B}^{k+1}_{(1)} = \frac{1}{\rho + 2\alpha_N + p} \left( p \mathcal{B}^k_{(1)} -\mu^{k}_{(1)} - \rho (\mathcal{Z}^{k+1}_{(1)}  + \mathcal{E}^{k+1}_{(1)} - \mathcal{T}_{(1)}) \right)$; \label{line9}
		\State $\mu^{k+1}_{(1)} = \frac{1}{1+\tau}(\tau\mu^{k+1}_{(1)} - \omega \rho^{-1}(\mathcal{Z}^{k+1}_{(1)} + \mathcal{E}^{k+1}_{(1)} + \mathcal{B}^{k+1}_{(1)} -\mathcal{T}_{(1)}))$;\label{line10}
		\If{$(k+1)\% k_\mathrm{interval} = 0 $}
		\State $\rho \gets  \min(\overline{\rho}, (1+\gamma) \rho )$;
		\EndIf
		\EndFor
	\end{algorithmic}
\end{algorithm}
% We give some comments regarding the above modified adaptive SDD-ADMM. 
Lines \ref{line4}-\ref{line9} are standard proximal ADMM updates, while we update the dual variable in line \ref{line10} via a SDD step. 
Due to the linearity of constraints, each partial proximal Augmented Lagrangian subproblem in SDD-ADMM admit closed-form solutions, and hence we do not necessarily need to perform a proximal gradient step for each block variable. 
% Since each subproblem is optimized exactly, it suffices to set the proximal coefficient to some constant $p>0$, in contrast to $\theta \mathrm{Lip}(\mu^k, \rho)$ as in \eqref{eq: sdd-admm-x}, to establish the descent property of the potential function. 
Moreover, motivated by the adaptive SDD-ADMM in Section \ref{sec: adaptive admm}, we simply increase the penalty $\rho$ by some factor $(1+\gamma)$ after every fixed number of iterations, until some upper bound $\bar{\rho}$ is reached. We also acknowledge that slow convergence of SDD-ADMM is observed with a fixed penalty.}

{Given a parameter $R_{cp} >0$ and dimensions $(I_1, I_2, I_3)$, we guess the initial CP rank by $R = R_{cp} + \lceil 0.2 R_{cp}\rceil$, and generate all data exactly the same way as in \cite{jiang2019structured}. For adaptive SDD-ADMM, we choose $\gamma = 1/3$, $\tau =1/(1+\gamma)$, $\omega = (1+\gamma)/\gamma$, $p = 1$, and the initial penalty $\rho=2$; moreover, we set $k_\mathrm{interval} =10$ and $\bar{\rho}$ = 1e6. For each instance, we run ADMM-g with three different values of $\rho$: $\rho=2$ (the initial penalty passed to adaptive SDD-ADMM), $\rho$ = 1e6 (the maximum penalty used in adaptive SDD-ADMM), and $\rho$ = 1e3 (an intermediate value). For all algorithms, we initialize $(A, B, C)$ with standard Gaussian entries, $\mathcal{Z}$ with the zero tensor, and $(\mathcal{B},\mathcal{E})$ with the tensors used to generate $\mathcal{T}$.}

{
% We test on two different sets of problems: $(I_1, I_2, I_3, R_{cp}) = (10, 20, 30, 10)$ and  $(I_1, I_2, I_3, R_{cp}) = (30, 50, 70, 40)$. 
For $(I_1, I_2, I_3, R_{cp}) = (30, 50, 70, 40)$, we plot $\texttt{res}_k$ and $\texttt{err}_k$ as functions of iteration index $k$ in Fig. \ref{figure5}: here $\texttt{res}_k$ is the geometric mean of the primal residual $\|\mathcal{Z}^{k}+\mathcal{E}^k +\mathcal{B}^k-\mathcal{T}\|_F$ over 3 instances, and $\texttt{err}_k$ is the geometric mean of the relative error $\|\mathcal{Z}^k - \mathcal{Z}^*\|_F/\|\mathcal{Z}^*\|$ over 3 instances, where $\mathcal{Z^*}$ is the low-rank ground truth. The adaptive SDD-ADMM is able to reduce the primal residual close to zero, and recover a $\mathcal{Z}^{k}$ whose relative error is less than $1\%$ or even close to $0.1\%$. In contrast, the performance of ADMM-g is sensitive to the choice of $\rho$: a smaller $\rho$ usually results in a large primal residual, while a larger $\rho$ leads to $\mathcal{Z}^{k}$ with poor quality. Tests on other problem scales exhibit similar behaviors and hence are omitted from presentation.}
%\begin{figure*}[h!]
%\center{
%\begin{tabular}{@{}c@{}}
%	\includegraphics[width=.45\linewidth]{plots/Jul10-1905_res_10-20-30-3.pdf} \\[\abovecaptionskip]
%	(a) Geo. Mean of Primal Residual
%	\end{tabular}
%	\begin{tabular}{@{}c@{}}
%	\includegraphics[width=.45\linewidth]{plots/Jul10-1905_err_10-20-30-3.pdf} \\[\abovecaptionskip]
%	(b) Geo. Mean of Relative Error
%\end{tabular}}
%\caption{$(I_1, I_2, I_3, R_{cp}) = (10, 20, 30, 3)$} \label{figure2}
%\end{figure*}
% \begin{figure*}[h!]
% \center{
% \begin{tabular}{@{}c@{}}
% 	\includegraphics[width=.45\linewidth]{plots/Jul10-1906_res_10-20-30-10.pdf} \\[\abovecaptionskip]
% 	(a) Geo. Mean of Primal Residual
% 	\end{tabular}
% 	\begin{tabular}{@{}c@{}}
% 	\includegraphics[width=.45\linewidth]{plots/Jul10-1906_err_10-20-30-10.pdf} \\[\abovecaptionskip]
% 	(b)  Geo. Mean of Relative Error
% \end{tabular}}
% \caption{$(I_1, I_2, I_3, R_{cp}) = (10, 20, 30, 10)$}\label{figure3}
% \end{figure*}
%\begin{figure*}[h!]
%\center{
%\begin{tabular}{@{}c@{}}
%	\includegraphics[width=.45\linewidth]{plots/Jul10-1908_res_30-50-70-10.pdf} \\[\abovecaptionskip]
%	(a) Geo. Mean of Primal Residual
%	\end{tabular}
%	\begin{tabular}{@{}c@{}}
%	\includegraphics[width=.45\linewidth]{plots/Jul10-1908_err_30-50-70-10.pdf} \\[\abovecaptionskip]
%	(b) Geo. Mean of Relative Error
%\end{tabular}}
%\caption{$(I_1, I_2, I_3, R_{cp}) = (30, 50, 70, 10)$}\label{figure4}
% \end{figure*}
\begin{figure*}[h!]
\center{
\begin{tabular}{@{}c@{}}
	\includegraphics[width=.42\linewidth]{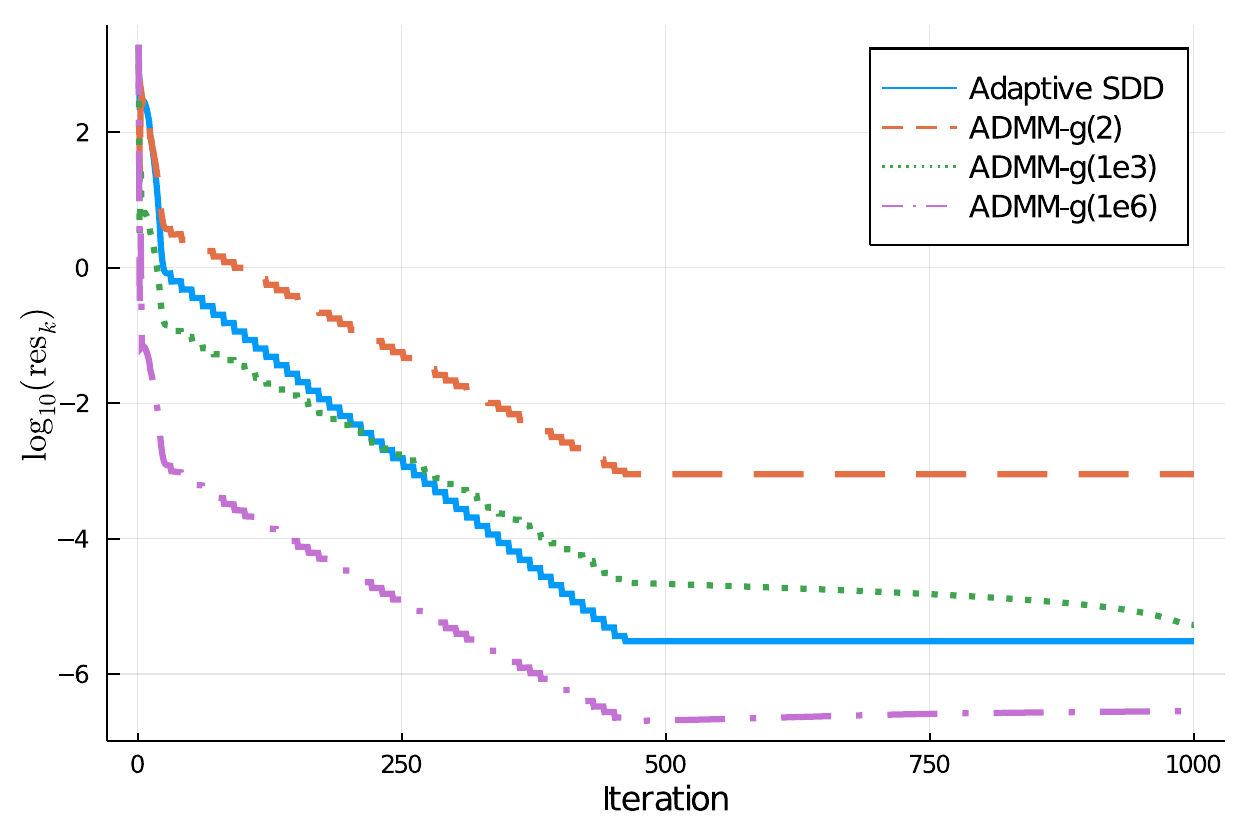} \\[\abovecaptionskip]
	(a) Geo. Mean of Primal Residual
	\end{tabular}
	\begin{tabular}{@{}c@{}}
	\includegraphics[width=.42\linewidth]{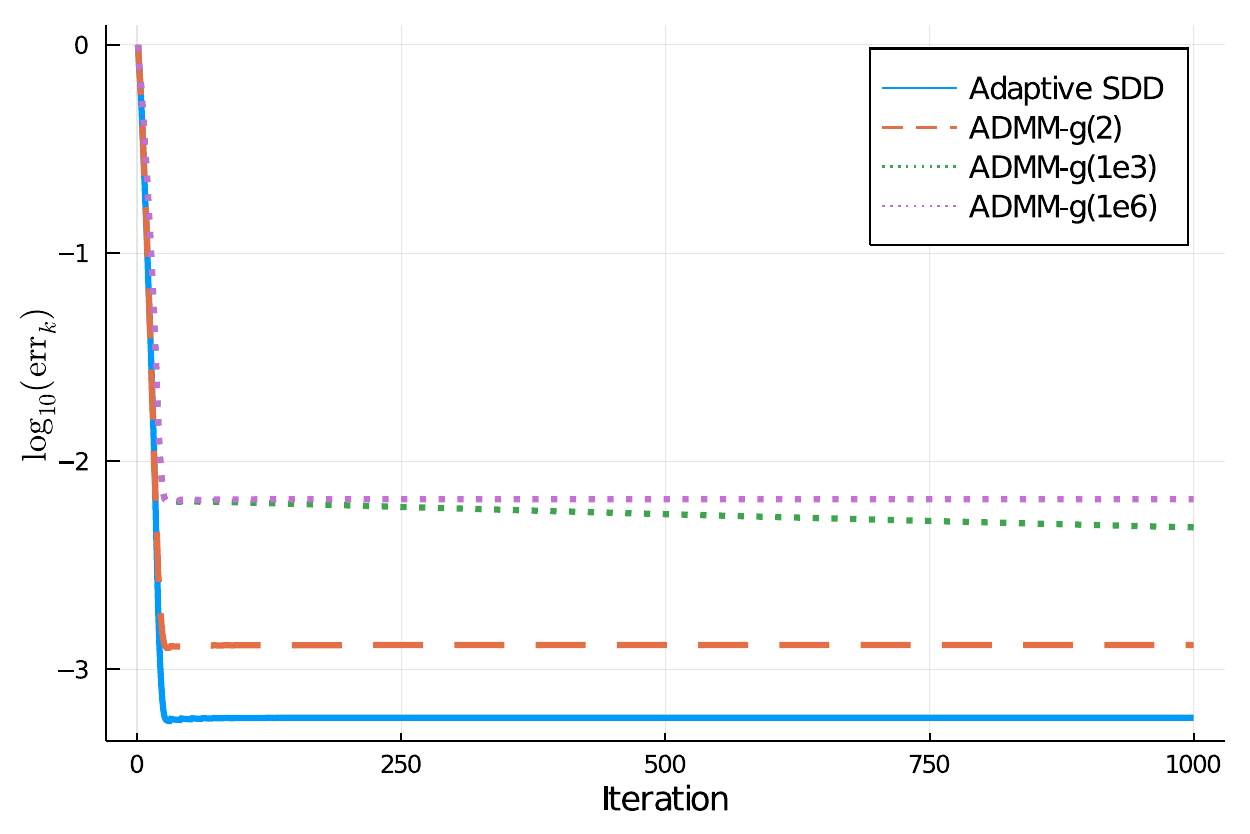} \\[\abovecaptionskip]
	(b) Geo. Mean of Relative Error
\end{tabular}}
\caption{$(I_1, I_2, I_3, R_{cp}) = (30, 50, 70, 40)$}\label{figure5}
\end{figure*}

\subsection{Observations for UDD-ALM \label{section: udd example}}
In this last subsection, we present some experiments on UDD-ALM applied to weakly convex minimization over affine constraints. A key observation in our experiments is that, although the dual step size $\varrho$ is chosen as a constant in our analysis, UDD-ALM is very sensitive to its numerical value. In particular, UDD-ALM may indeed fail to converge when a relatively large $\varrho$ is used, and the order of constraint violation $\|Ax-b\|$ and the order of $\varrho$ are closely related. 
We consider a simple consensus problem 
\begin{align}\label{eq: consensus}
	\min_{x,z\in \R^n } \{f(x) + \alpha \|z\|_1~|~x-z = 0, \|x\|\leq r\},
\end{align}
where $f(x) = -x^\top (U^\top U) x$ and $U \in \R^{n \times n}$ has standard Gaussian entries. We fix $\alpha = r = 1$, $\rho = 1000$, and test UDD-ALM with different dual scaling factors $\texttt{ds}>0$: the dual stepsize is chosen as $\varrho = \rho\times 0.1^{\texttt{ds}}.$ In all runs of UDD-ALM, $x^0$ and $z^0$ are initialized with standard Gaussian entries. The objective values at the end of 2000 iterations are recorded in Table \ref{table2}, and we plot the trajectories of the primal residuals in Fig. \ref{fig7}. We observe that for both instances, UDD-ALM converges to the zero vector as \texttt{ds} gets smaller, which is indeed a stationary point of \eqref{eq: consensus}; moreover, the order of constraint violation drops significantly as well.

 We note that when $\texttt{ds} = 24$, the value of $\varrho$ can be orders-of-magnitude smaller than $\epsilon$ and hence the theoretical $\mathcal{O}(\epsilon^{-2})$ complexity in Theorem \ref{thm: udd convergence} is invalidated.
%  nevertheless, Fig. \ref{fig7} also suggests that the empirical performance of UDD-ALM can be reasonably good for specific problems. 
In particular, when we choose $\varrho$ to be close to zero, UDD becomes the limiting behavior of SDD with $\tau\rightarrow +\infty$, and the resulting algorithm resembles the penalty method, where the dual variables stay close to zero. In other words, the empirical convergence of UDD, to some extent, can be attributed to the penalty method. It is important to note that despite the convergence results presented in Theorem \ref{thm: udd convergence}, we acknowledge that the dual step size $\varrho$ might implicitly affect the fulfillment of the proposed regularity condition at the primal limit point. In practical terms, a large value of $\varrho$ often leads to primal iterates approaching the boundary of $X$, which in turn increases the likelihood of the regularity condition being violated. As a result, we do not claim that UDD-ALM outperforms existing algorithms. Instead, our objective is to share our initial observations on this seemingly counter-intuitive scheme in order to stimulate further exploration and understanding of its potential advantages and limitations.

% Despite the convergence results in Theorem \ref{thm: udd convergence}, we think the dual step size $\varrho$ might implicitly determines if the proposed regularity condition holds at the primal limit point. For example, a large $\varrho$ often encourages primal iterates to hit the boundary of $X$, making the regularity condition more likely to fail. As a result, we do not argue that UDD-ALM outperforms existing algorithms, but rather aim to share some preliminary observations on this seemingly unorthodox scheme in this paper.}

\begin{table}[h!]
\caption{Objective Values Obtained by UDD-ALM}\label{table2}
\begin{center}
\begin{tabular}{lllllll}
\toprule
  $n$   & \texttt{ds}=2 & \texttt{ds}=4  & \texttt{ds}=8  & \texttt{ds}=12  & \texttt{ds}=24  \\
 \hline
% 100	&  17.59  &  3.51e-2  &  2.89e-6 & 2.89e-10 & 2.89e-22    \\
% 200    &   \\	
 500    &   17.59  &  3.51e-2  &  2.89e-6  & 2.89e-10 & 2.89e-22 \\ 
1000    &   25.43  &  6.68e-6  &  6.68e-6  & 6.68e-10 & 6.68e-22 \\  
\bottomrule
\end{tabular}
\end{center}
\end{table}
\begin{figure*}[h!]
\center{
\begin{tabular}{@{}c@{}}
	\includegraphics[width=.42\linewidth]{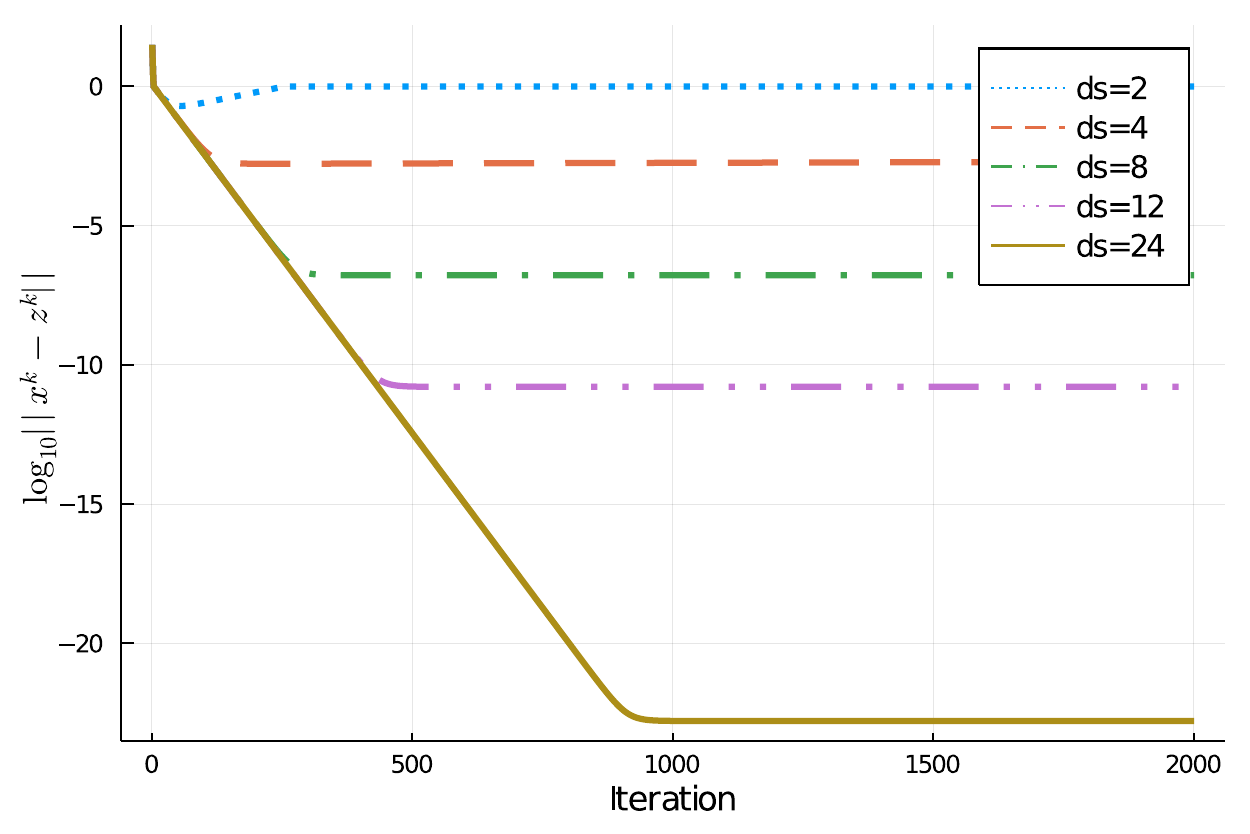} \\[\abovecaptionskip]
	(a) An instance with $n=500$
	\end{tabular}
	\begin{tabular}{@{}c@{}}
	\includegraphics[width=.42\linewidth]{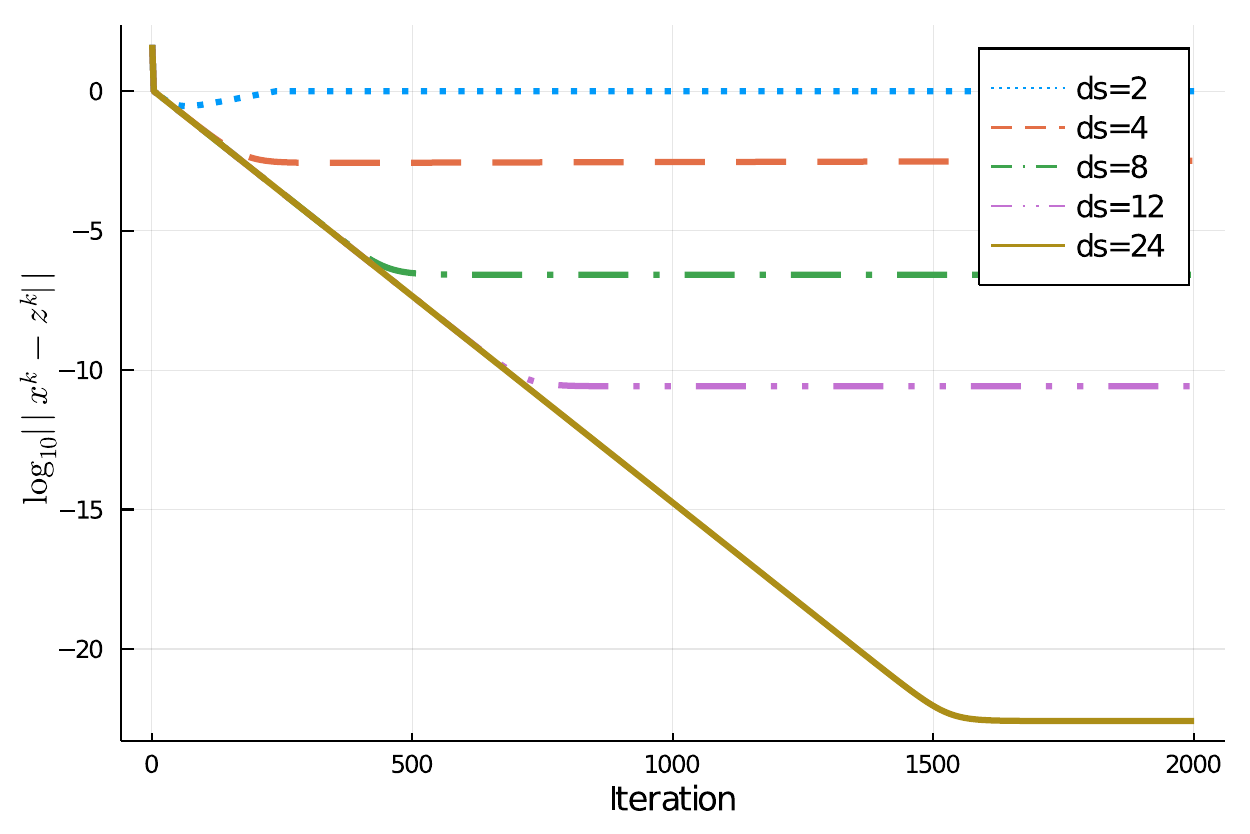} \\[\abovecaptionskip]
	(b) An instance with $n=1000$
\end{tabular}}
\caption{UDD-ALM with Different \texttt{ds}}\label{fig7}
\end{figure*}

\section{Conclusions}\label{sec: conclusion}
This paper proposes two new algorithms based on the concept of dual descent: SDD-ADMM and UDD-ALM. We apply SDD-ADMM to solve nonlinear equality-constrained multi-block problems, and establish an $\mathcal{O}(\epsilon^{-4})$ iteration complexity upper bound, or $\mathcal{O}(\epsilon^{-3})$ and  $\mathcal{O}(\epsilon^{-2})$ under additional technical assumptions. When UDD-ALM is applied for weakly convex minimization over affine constraints, we show that under a regularity condition, the algorithm asymptotically converges to a stationary point and finds an approximate solution in $\mathcal{O}(\epsilon^{-2})$ iterations. Our iteration complexities for both algorithms either achieve or improve the  best-known results in the ADMM and ALM literature. Moreover, SDD-ADMM addresses a long-standing limitation of existing ADMM frameworks. 

Nevertheless, the behavior of UDD-ALM is somehow not fully understood. Theoretically the dual stepsize $\varrho$ is treated as a constant, while, as we illustrate numerically, the convergence of UDD-ALM can be very sensitive to its numerical value. We conjecture that the modified Robinson's condition required on the limit point can be implicitly affected by the dual stepsize. This issue seems to be highly problem-dependent, and we leave it as our future work.
\appendix

\section{Examples Satisfying Assumption \ref{assumption: improved rate}}\label{examples}
We give some examples where Assumption \ref{assumption: improved rate} can be satisfied. Suppose $i=p=1$ for simplicity. We assume that 
for all $x\in X(r)$, $\nabla h(x)$ has full column rank, and their smallest singular values are bounded away from zero, i.e.,  $\sigma(r): = \inf_{x\in X(r)} \sigma_{\min}(\nabla h(x)) > 0. $
% \begin{align*}
% 	\sigma(r): = \inf_{x\in X(r)} \sigma_{\min}(\nabla h(x)) > 0. 	
% \end{align*}
\begin{example}\label{example1}
	Let $g$ be a possibly nonconvex function with 
	\begin{align*}
		M_g: = \sup \{\|\xi_g\|: \xi_g \in \partial g(x), x\in X(r)\} < +\infty,
	\end{align*}
	i.e., $g$ is Lipschitz over $X(r)$.  Then we have
	\begin{align*}
		\mathrm{dist}(-\nabla h(x) \mu, \partial g(x)) 
         =&  \inf_{\xi_g \in \partial g(x)} \|\nabla h(x)\mu + \xi_g\|\\
		 \geq & \|\nabla h(x) \mu\| + \inf_{\xi_g\in \partial g(x)} -\|\xi_g\| \geq \sigma \|\mu\| - M_g.	
	\end{align*}
	Hence Assumption \ref{assumption: improved rate} is satisfied with $\sigma = \sigma(r)$. 
\end{example}
\begin{example}\label{example2}
	Let $g = \delta_{X} + \tilde{g}$ where $X$ is a full-dimensional compact convex set and $\tilde{g}$ is a convex function over $X$. By \cite[Theorem 24.7 and Theorem 23.8]{rockafellar1970convex},
	\begin{align*}
		& M_g:= \sup \{ \| \xi_{\tilde{g}}\|: \ \xi_{\tilde{g}} \in \partial \tilde{g}(x), x\in X  \} < + \infty, \text{~and~}\partial g(x) = \partial \tilde{g}(x) + N_{X}(x), \ \forall x\in X,
	\end{align*}
	where $N_{X}(x)$ denotes the normal cone of $X$ at $x\in X$. Further assume that $X(r)$ belongs to $\mathrm{int}~X$, the interior of $X$, so that $N_{X}(x) = \{0\}$ for all $x\in X(r)$. As a result, 
	\begin{align*}
		\mathrm{dist}(-\nabla h(x) \mu, \partial g(x)) =&  \inf\{ \|\nabla h(x)\mu + \xi_{\tilde{g}}+ d_g\|: \  \xi_{\tilde{g}}\in \partial \tilde{g}(x), d_g\in N_{X}(x)\}	\\
		\geq &  \|\nabla h(x)\mu \|	 - M_g\geq \sigma(r) \|\mu\| - M_g.
	\end{align*}
	So again Assumption \ref{assumption: improved rate} is satisfied with $\sigma = \sigma(r)$. 
\end{example}
\begin{example}\label{example3}
	Suppose $g = \delta_X$ and $X := \{x\in \R^n~|~F(x)\in  D\}$, where $F:\R^n \rightarrow \R^p$ are continuously differentiable and $D\subset \R^p$ $(p \leq  n-m)$. 
	Further suppose that for any $x\in X(r)$, the Jacobian matrix $J(x): = [\nabla h(x), \nabla F(x)] \in \R^{n \times (m+p)}$ has full column rank, and $\sigma : = \min_{x\in X(r)} \sigma_{\min}(J(x)) > 0.$
	Then by \cite[Theorem 6.14]{rockafellar2009variational}, for all $x\in X(r)$, it holds that 
%	$\partial g(x) = N_X(x) \subset \{ \nabla F(x) y~|~y\in N_D(F(x))\}$.
 	\begin{align*}
 		\partial g(x) = N_X(x) \subset \{ \nabla F(x) y~|~y\in N_D(F(x))\}.	
 	\end{align*}
	Denote $u = [\mu^\top, y^\top]^\top$; since  $\|J(x) u\|\geq \sigma \|u\|\geq \sigma \|\mu\|$, we have 
	\begin{align*}
		\mathrm{dist}(-\nabla h(x) \mu, \partial g(x)) \geq &  \inf\{\| \nabla h(x) \mu + \nabla F(x)y\| : y\in N_D(F(x))\} \\
		= &  \inf\{\| J(x)u\| : y\in N_D(F(x))\}  \geq   \sigma \|\mu\|,
	\end{align*}
	In particular, consider $h(x) = Ax-b$, and $X = \{x\in \R^n~|~l\leq Cx \leq u \}$ where $C\in \R^{p \times n}$ and $l, u\in \R^p$. Then Assumption \ref{assumption: improved rate} holds as long as rows of $A$ and $C$ are linearly independent. 
\end{example}

\section{UDD-ALM with Nonlinear Constraints}\label{sec: udd-nonlinear}
In this section we apply UDD -ALM to deal with nonlinear constraints and establish its convergence by assuming a descent solution oracle of each augmented Lagrangian relaxation.

\begin{assumption}\label{assumption: udd-nonlinear}
	We make the following assumptions regarding problem \eqref{eq: lc problem}.
	\begin{enumerate}
		\item The function $g:\R^{n} \rightarrow \overline{\R}$ can be decomposed as $\tilde{g} + \delta_{X}$, where $X \subseteq \R^n$ is compact and described by a finite number of inequality constraints, i.e., $X = \{x\in \R^n|q_l(x) \leq 0, \forall l \in [L]\}$ where $q_l:\R^n \rightarrow \R$ is continuously differentiable for $l\in [L]$, and $\tilde{g}:\R^{n} \rightarrow \R$ is continuous and convex over $X$. 
		\item The function $f:\R^n \rightarrow \R$ is continuously differentiable over $X$.
		\item The constraints $h:\R^n \rightarrow \R^m$ are given by $h(x) = [h_1(x) , \cdots, h_m(x)]^\top$, where $h_j:\R^n \rightarrow \R$ is continuously differentiable over $X$ for $j\in [m]$.
	\end{enumerate}
\end{assumption}
Denote $\nabla h(x) = [\nabla h_1(x), \cdots, \nabla h_m(x)]\in \R^{n \times m}$ in this subsection. We also define an approximate KKT point for problem \eqref{eq: lc problem} under Assumption \ref{assumption: udd-nonlinear} as follows.
\begin{definition}
	Let $\epsilon > 0$. We say $x$ is an $\epsilon$-KKT point for problem \eqref{eq: lc problem} if 
	\begin{subequations}
		\begin{align}
			& \mathrm{dist}\Big( -\nabla f(x) - \nabla h(x) \mu - \sum_{l=1}^L \nabla q_l(x)y_l, \partial \tilde{g}(x) \Big) \leq \epsilon, \ \|h(x)\| \leq \epsilon, 	\label{eq: kkt1-def} \\
			& q_l(x) \leq 0, \ y_l g_l(x) = 0, \ \forall l\in [L].	\label{eq: kkt2-def}
		\end{align}
	\end{subequations}
	for some $\mu \in \R^m$ and $y \in \R^L_+$. We simply say $x$ is a KKT point when $\epsilon=0$.
\end{definition}

% Recall the augmented Lagrangian function $$L_{\rho}(x, \mu) = f(x) + \tilde{g}(x) + \langle \mu, h(x)\rangle + \frac{\rho}{2}\|h(x)\|^2.$$  
The UDD-ALM with nonlinear constraints is almost the same as Algorithm \ref{alg: UDD-ALM}, except that we replace the primal update \eqref{eq: udd-alm-x} by the following nonlinear program:
\begin{align}\label{eq: udd-alm-x-nonlinear}
	\min_{x\in X}L_{\rho}(x, \mu^k) + \frac{c}{2}\|x-x^k\|^2,
\end{align}
where $L_{\rho}(x, \mu) = f(x) + \tilde{g}(x) + \langle \mu, h(x)\rangle + \frac{\rho}{2}\|h(x)\|^2.$
% We present the UDD-ALM with nonlinear constraints in Algorithm \ref{alg: UDD-ALM-Nonlinear}.
% \begin{algorithm}[H]
% 	\caption{: Unscaled Dual Descent ALM (UDD-ALM) with Nonlinear Constraints} \label{alg: UDD-ALM-Nonlinear}
% 	\begin{algorithmic}[1]
%         \State \textbf{Initialize} $x^0\in X$, $\mu^0 \in \R^m$, $\rho > 0$, $\varrho>0$, and $c >0$;
% 		\For{$k=0,1,2\cdots$}
% 		\State update $x^{k+1}$ by solving the following augmented Lagrangian relaxation:
% 		\begin{align}\label{eq: udd-alm-x-nonlinear}
% 			\min_{x\in X}L_{\rho}(x, \mu^k) + \frac{c}{2}\|x-x^k\|^2;
% 		\end{align}
% 		\State update $\mu^{k+1}$ through an unscaled dual descent update: 
% 		\begin{align}\label{eq: udd-alm-dual-nonlinear}
% 			\mu^{k+1} = \mu^k -  \varrho h(x^{k+1});
% 		\end{align}
% 		\EndFor
% 	\end{algorithmic}
% \end{algorithm}
Next we define a descent solution oracle for problem \eqref{eq: udd-alm-x-nonlinear}. 
\begin{assumption}\label{assumption: descent oracle}
	Given $x^k \in X$ and $\mu^k \in \R^m$, we can find $x^{k+1}$ such that
	\begin{align}\label{eq: sufficient descent}
		L_{\rho}(x^{k+1}, \mu^k) \leq L_{\rho}(x^{k}, \mu^k) - \nu \|x^{k+1}-x^k\|^2 	
	\end{align}
	for some $\nu >0$, and there exists $y^{k+1} \in \R^L_{+}$ such that
	\begin{subequations}\label{eq: kkt}
		\begin{align}
			& 0 \in  \partial_x L_{\rho}(x^{k+1}, \mu^k) + c (x^{k+1}-x^k) + \sum_{l=1}^L\nabla q_l(x^{k+1})y^{k+1}_l, \label{eq: kkt-1}\\
			 & q_l(x^{k+1}) \leq 0, \ y^{k+1}_l q_l(x^{k+1}) = 0, \ \forall l \in [L].\label{eq: kkt-2}
		\end{align}
	\end{subequations}
\end{assumption}
\begin{remark}
	Assumption \ref{assumption: descent oracle} 	requires $x^{k+1}$ to be a KKT point of problem \eqref{eq: udd-alm-x-nonlinear} with an improved objective value compared to the previous iterate $x^{k}$. Notice that the sufficient descent condition \eqref{eq: sufficient descent} can be satisfied with $\nu = c/2$ if some global solver for problem \eqref{eq: udd-alm-x-nonlinear} is available. To this end, we also note that given $x^k\in X$ and $\mu^k\in \R^m$, problem \eqref{eq: udd-alm-x-nonlinear} is convex if $X$ is convex, $f$ and $h_1,\cdots, h_m$ have continuous Hessians over $X$, and $c$ is sufficiently large. We adopt \eqref{eq: kkt} to avoid unnecessary technicality, while it is possible to allow $x^{k+1}$ to be an inexact KKT solution of \eqref{eq: udd-alm-x-nonlinear}. 
\end{remark}

Since $X$ is assumed to be compact, the sequence $\{x^k\}_{k\in \N}$ has at least one limit point $x^*\in X$. The next lemma shows that if $x^*$ satisfies the linearly independence constraint qualification (LICQ), then $\{\mu^{k}\}_{k\in \N}$ has a bounded subsequence.
\begin{lemma}\label{lemma: nonlinear-udd-dual-bound}
	Suppose Assumptions \ref{assumption: udd-nonlinear} and \ref{assumption: descent oracle} hold. Let $x^*\in X$ be a limit point of $\{x^k\}_{k\in \N}$ generated by UDD-ALM, and $\{x^{k_r}\}_{r\in \N}$ be the corresponding convergent subsequence.
	Denote $I(x^*) = \{l \in [L]~|~ q_l(x^*) =0\}$. Suppose that the matrix
	\begin{equation}\label{eq: jacobian_LICQ}
	H^*:= [\nabla h_1(x^*),\cdots, \nabla h_m(x^*), \{\nabla q_l(x^*)\}_{l\in I(x^*)}] \in \R^{n \times (m+|I(x^*)|)}
	\end{equation}
	has full column rank, then the sequence $\{\mu^{k_r}\}_{r\in \N}$ is bounded.
\end{lemma}
\begin{proof}
	For $l\notin I(x^*)$, we have $g_l(x^{k_r}) < 0$ and thus $y^{k_r}_l = 0$ by \eqref{eq: kkt-2} for all sufficiently large $r \in \N$. Hence, \eqref{eq: kkt-1} becomes 
	$H_{k_r} [(\mu^{k_r})^\top, (y^{k_r}_{I(x^*)})^\top]^\top= e_{k_r},$
	where 
	\begin{align*}
		H_{k_r} := & [\nabla h_1(x^{k_r}),\cdots, \nabla h_m(x^{k_r}), \{\nabla q_l(x^{k_r})\}_{l\in I(x^*)}], \\
		e_{k_r} :=& -\nabla f(x^{k_r}) - \xi^{k_r}_{\tilde{g}} - (\rho+\varrho)\nabla h(x^{k_r}) h(x^{k_r}) - c(x^{k_r}-x^{k_r-1}),
	\end{align*}
	$y^{k_r}_{I(x^*)} \in \R^{|I(x^*)|}_+$ is the sub-vector of $y^{k_r}$ specified by indices in $I(x^*)$, and $\xi^{k_r}_{\tilde{g}} \in \partial \tilde{g}(x^{k_r})$. Since $H^*$ has full column rank, so does $H^{k_r}$ for sufficiently large $r\in \N$, which suggests that $\|\mu^{k_r}\|\leq \| \mu^{k_r}\|+\| y^{k_r}_{I(x^*)}\| \leq \| (H_{k_r}^{\top}H_{k_r} )^{-1}H_{k_r}^{\top} \| \| e_{k_r}\|$.
% 	\begin{align}\label{eq: nonlinear udd dual bound}
% %			\begin{bmatrix} \mu^{k_r} \\ y^{k_r}_{I(x^*)}\end{bmatrix}  = \left(H_{k_r}^{\top}H_{k_r} \right)^{-1}H_{k_r}^{\top} e_{k_r}.
% 	\|\mu^{k_r}\|\leq \| \mu^{k_r}\|+\| y^{k_r}_{I(x^*)}\| \leq \left\| \left(H_{k_r}^{\top}H_{k_r} \right)^{-1}H_{k_r}^{\top} \right\| \| e_{k_r}\|.
% 	\end{align}
	Due to the compactness of $X$, the continuity of $\tilde{g}$, and the continuous differentiability of $f$, $g_l$'s, and $h_j$'s, we know that {$\|e_{k_r}\|$} is bounded by some finite constant depending on the problem data $(f, \tilde{g}, X, h)$ as well as parameters $(\rho, \varrho, c)$. {As a result of the previous inequality}, the sequence $\{\mu^{k_r}\}_{r\in \N}$ is bounded.
\end{proof}

% Now we present the convergence of UDD-ALM with nonlinear constraints.
\begin{theorem}\label{thm: nonlinear udd}
	Suppose Assumptions \ref{assumption: udd-nonlinear}-\ref{assumption: descent oracle} hold. Let $x^*\in X$ be a limit point of $\{x^k\}_{k\in \N}$ generated by UDD-ALM that satisfies the LICQ condition, i.e., $H^*$ defined in \eqref{eq: jacobian_LICQ} has full column rank. Then the following statements hold.
	\begin{enumerate}
		\item (Asymptotic Convergence) The point $x^*$ is a KKT point of problem \eqref{eq: lc problem}.
		\item (Iteration Complexity) Let $\epsilon >0$. Define constants $\sigma_1: = \min \{\nu, \varrho\}$ and $\sigma_2:=	c+ (\rho+\varrho) \max_{x\in X}\|\nabla h(x)\|.$
		% \begin{align*}
		% 	\sigma_1: = \min \{\nu, \varrho\}, \ \sigma_2:=	c+ (\rho+\varrho) \max_{x\in X}\|\nabla h(x)\|.
		% \end{align*}
		UDD-ALM finds an $\epsilon$-KKT point in at most $K$ iterations, where
		\begin{align} 
			K \leq \left\lceil \frac{\max\{1, \sigma_2\}^2(L_{\rho}(x^0, \mu^0) - f(x^*)-\tilde{g}(x^*))}{\sigma_1 \epsilon^2} \right\rceil = \mathcal{O}(\epsilon^{-2}).\notag %\label{eq: nonlinear udd iter complex}
		\end{align}	
	\end{enumerate}
\end{theorem}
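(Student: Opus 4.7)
My plan is to mirror the blueprint of Theorem~\ref{thm: udd convergence} (the affine case), adapting each lemma to the nonlinear setting where the primal subproblem is handled by the descent oracle of Assumption~\ref{assumption: descent oracle} rather than by a single proximal-gradient step. First I would prove a one-step descent: combining \eqref{eq: sufficient descent} with the identity $L_\rho(x^{k+1},\mu^{k+1})-L_\rho(x^{k+1},\mu^k)=\langle \mu^{k+1}-\mu^k,h(x^{k+1})\rangle=-\varrho\|h(x^{k+1})\|^2$ (by the UDD update \eqref{eq: udd-alm-dual-nonlinear}) immediately yields
\begin{equation*}
L_\rho(x^k,\mu^k)-L_\rho(x^{k+1},\mu^{k+1})\;\geq\;\nu\|x^{k+1}-x^k\|^2+\varrho\|h(x^{k+1})\|^2\;\geq\;\sigma_1\bigl(\|x^{k+1}-x^k\|^2+\|h(x^{k+1})\|^2\bigr).
\end{equation*}

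Next I would bound the dual residual using the KKT inclusion \eqref{eq: kkt-1}. Substituting $\mu^k=\mu^{k+1}+\varrho h(x^{k+1})$ and grouping terms turns \eqref{eq: kkt-1} into
\begin{equation*}
-\nabla f(x^{k+1})-\nabla h(x^{k+1})\mu^{k+1}-\sum_{l=1}^L\nabla q_l(x^{k+1})y^{k+1}_l-(\rho+\varrho)\nabla h(x^{k+1})h(x^{k+1})-c(x^{k+1}-x^k)\;\in\;\partial g_0(x^{k+1}),
\end{equation*}
from which the Euclidean distance appearing in \eqref{eq: kkt1-def} is at most $c\|x^{k+1}-x^k\|+(\rho+\varrho)\|\nabla h(x^{k+1})\|\,\|h(x^{k+1})\|\le\sigma_2\max\{\|x^{k+1}-x^k\|,\|h(x^{k+1})\|\}$. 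The complementarity requirement \eqref{eq: kkt2-def} holds exactly at every iterate by \eqref{eq: kkt-2}, so only the first line of the $\epsilon$-KKT definition needs controlling.

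For the asymptotic claim, I would extract the subsequence $x^{k_r}\to x^*$ and apply Lemma~\ref{lemma: nonlinear-udd-dual-bound} to pass to a further subsequence along which $\mu^{k_r}\to\mu^*$ and $y^{k_r}_{I(x^*)}\to y^*$ (the bound \eqref{eq: nonlinear udd dual bound} also controls $y^{k_r}$, which is essential for taking limits in the KKT inclusion). Continuity of $L_\rho$ on the compact set $X\times\{\mu:\|\mu\|\le R\}$ then gives $L_\rho(x^{k_r},\mu^{k_r})\to L_\rho(x^*,\mu^*)$; monotonicity of $\{L_\rho(x^k,\mu^k)\}_{k\in\N}$ from the one-step descent promotes this to convergence of the full sequence, so the telescoped sum $\sum_{k}\sigma_1(\|x^{k+1}-x^k\|^2+\|h(x^{k+1})\|^2)$ is finite. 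Consequently $\|x^{k+1}-x^k\|\to 0$ and $h(x^{k+1})\to 0$, so $h(x^*)=0$ and $L_\rho(x^*,\mu^*)=f(x^*)+g_0(x^*)$; passing to the limit in the rewritten inclusion above, using continuity of $\nabla f,\nabla h,\nabla q_l$ and closedness of $\partial g_0$, yields the KKT system at $x^*$.

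For complexity, plugging $L_\rho(x^*,\mu^*)=f(x^*)+g_0(x^*)$ into the telescope produces an index $\bar k\in\{0,\dots,K-1\}$ with $\max\{\|x^{\bar k+1}-x^{\bar k}\|,\|h(x^{\bar k+1})\|\}\le\sqrt{(L_\rho(x^0,\mu^0)-f(x^*)-g_0(x^*))/(\sigma_1 K)}$; multiplying by $\max\{1,\sigma_2\}$ to dominate both the primal residual bound $\|h(x^{\bar k+1})\|$ and the dual-residual bound from the previous paragraph, and forcing the result below $\epsilon$, gives exactly the claimed $K=\mathcal{O}(\epsilon^{-2})$. The main obstacle I anticipate is verifying that $y^{k_r}$ can indeed be shown bounded alongside $\mu^{k_r}$ (the LIQC hypothesis on $H^*$ is precisely what makes \eqref{eq: nonlinear udd dual bound} work for the joint vector $[(\mu^{k_r})^\top,(y^{k_r}_{I(x^*)})^\top]^\top$) and justifying the limit of $\partial g_0(x^{k_r})$ via outer semicontinuity; the complementarity in the limit follows easily since $q_l(x^{k_r})\to q_l(x^*)<0$ forces $y^{k_r}_l=0$ for $l\notin I(x^*)$ along the tail.
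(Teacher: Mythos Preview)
Your proposal is correct and follows essentially the same approach as the paper, which establishes the one-step descent inequality $L_\rho(x^k,\mu^k)-L_\rho(x^{k+1},\mu^{k+1})\ge\nu\|x^{k+1}-x^k\|^2+\varrho\|h(x^{k+1})\|^2$ and then defers everything else to a ``straightforward modification of the proof of Theorem~\ref{thm: udd convergence}'' via Lemma~\ref{lemma: nonlinear-udd-dual-bound}. In fact you have written out more of the details (the dual-residual bound yielding $\sigma_2$, the joint boundedness of $y^{k_r}_{I(x^*)}$ from \eqref{eq: nonlinear udd dual bound}, and the limit passage in the KKT inclusion) than the paper itself provides.
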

\begin{proof}
	First note that $L_{\rho}(x^{k}, \mu^k) - L_{\rho}(x^{k+1}, \mu^{k+1}) \geq \nu \|x^{k+1}-x^k\|^2 + \varrho\|h(x^{k+1})\|^2$ for all $k\in \N$ by \eqref{eq: sufficient descent} and the UDD update.
	% \begin{align*}
	% 	L_{\rho}(x^{k}, \mu^k) - L_{\rho}(x^{k+1}, \mu^{k+1}) \geq \nu \|x^{k+1}-x^k\|^2 + \varrho\|h(x^{k+1})\|^2.
	% \end{align*}
	Then with the help of Lemma \ref{lemma: nonlinear-udd-dual-bound}, the claims can be proved via straightforward modification of the proof of Theorem \ref{thm: udd convergence}.
\end{proof}
{
\begin{remark}
	We note that the $\mathcal{O}(\epsilon^{-2})$ complexity bound in Theorem \ref{thm: udd convergence} is measured by first-order oracles of the problem data, whereas the iteration complexity in Theorem \ref{thm: nonlinear udd} is measured by the subproblem oracle defined in Assumption \ref{assumption: descent oracle}. Information including $\|\mu^k\|$ and $\rho$ may affect the computation effort to evaluate the subproblem oracle, which we do not consider explicitly in Theorem \ref{thm: nonlinear udd}. 
\end{remark}}

%\section*{Acknowledgments}

\bibliographystyle{siamplain}
\bibliography{ref}

\begin{thebibliography}{10}

\bibitem{andreani2008augmented}
{\sc R.~Andreani, E.~G. Birgin, J.~M. Mart{\'\i}nez, and M.~L. Schuverdt}, {\em
  On augmented {Lagrangian} methods with general lower-level constraints}, SIAM
  Journal on Optimization, 18 (2008), pp.~1286--1309.

\bibitem{beck2009fast}
{\sc A.~Beck and M.~Teboulle}, {\em A fast iterative shrinkage-thresholding
  algorithm for linear inverse problems}, SIAM Journal on Imaging Sciences, 2
  (2009), pp.~183--202.

\bibitem{berahas2021sequential}
{\sc A.~S. Berahas, F.~E. Curtis, D.~Robinson, and B.~Zhou}, {\em Sequential
  quadratic optimization for nonlinear equality constrained stochastic
  optimization}, SIAM Journal on Optimization, 31 (2021), pp.~1352--1379.

\bibitem{bertsekas2014constrained}
{\sc D.~P. Bertsekas}, {\em Constrained Optimization and Lagrange Multiplier
  Methods}, Academic press, 2014.

\bibitem{boob2022stochastic}
{\sc D.~Boob, Q.~Deng, and G.~Lan}, {\em Stochastic first-order methods for
  convex and nonconvex functional constrained optimization}, Mathematical
  Programming,  (2022), pp.~1--65.

\bibitem{boyd2011distributed}
{\sc S.~Boyd, N.~Parikh, E.~Chu, B.~Peleato, J.~Eckstein, et~al.}, {\em
  Distributed optimization and statistical learning via the alternating
  direction method of multipliers}, Foundations and Trends{\textregistered} in
  Machine Learning, 3 (2011), pp.~1--122.

\bibitem{chen2016direct}
{\sc C.~Chen, B.~He, Y.~Ye, and X.~Yuan}, {\em The direct extension of {ADMM}
  for multi-block convex minimization problems is not necessarily convergent},
  Mathematical Programming, 155 (2016), pp.~57--79.

\bibitem{curtis2021inexact}
{\sc F.~E. Curtis, D.~P. Robinson, and B.~Zhou}, {\em Inexact sequential
  quadratic optimization for minimizing a stochastic objective function subject
  to deterministic nonlinear equality constraints}, arXiv preprint
  arXiv:2107.03512,  (2021).

\bibitem{davis2017three}
{\sc D.~Davis and W.~Yin}, {\em A three-operator splitting scheme and its
  optimization applications}, Set-valued and variational analysis, 25 (2017),
  pp.~829--858.

\bibitem{douglas1956numerical}
{\sc J.~Douglas and H.~H. Rachford}, {\em On the numerical solution of heat
  conduction problems in two and three space variables}, Transactions of the
  American mathematical Society, 82 (1956), pp.~421--439.

\bibitem{eckstein1992douglas}
{\sc J.~Eckstein and D.~P. Bertsekas}, {\em On the {Douglas—Rachford}
  splitting method and the proximal point algorithm for maximal monotone
  operators}, Mathematical Programming, 55 (1992), pp.~293--318.

\bibitem{gabay2024applications}
{\sc D.~Gabay}, {\em Applications of the method of multipliers to variational
  inequalities, in,(1983), 299. doi: 10.1016}, S0168-2024 (08), pp.~70034--1.

\bibitem{gabay1976dual}
{\sc D.~Gabay and B.~Mercier}, {\em A dual algorithm for the solution of
  nonlinear variational problems via finite element approximation}, Computers
  \& Mathematics with Applications, 2 (1976), pp.~17--40.

\bibitem{ghadimi2016accelerated}
{\sc S.~Ghadimi and G.~Lan}, {\em Accelerated gradient methods for nonconvex
  nonlinear and stochastic programming}, Mathematical Programming, 156 (2016),
  pp.~59--99.

\bibitem{glowinski1975approximation}
{\sc R.~Glowinski and A.~Marroco}, {\em Sur l'approximation, par
  {\'e}l{\'e}ments finis d'ordre un, et la r{\'e}solution, par
  p{\'e}nalisation-dualit{\'e} d'une classe de probl{\`e}mes de dirichlet non
  lin{\'e}aires}, Revue fran{\c{c}}aise d'automatique, informatique, recherche
  op{\'e}rationnelle. Analyse num{\'e}rique, 9 (1975), pp.~41--76.

\bibitem{gonccalves2016extending}
{\sc M.~L. Gon{\c{c}}alves, J.~G. Melo, and R.~D. Monteiro}, {\em Extending the
  ergodic convergence rate of the proximal {ADMM}}, arXiv preprint
  arXiv:1611.02903,  (2016).

\bibitem{gonccalves2017convergence}
{\sc M.~L. Gon{\c{c}}alves, J.~G. Melo, and R.~D. Monteiro}, {\em Convergence
  rate bounds for a proximal {ADMM} with over-relaxation stepsize parameter for
  solving nonconvex linearly constrained problems}, arXiv preprint
  arXiv:1702.01850,  (2017).

\bibitem{hajinezhad2019perturbed}
{\sc D.~Hajinezhad and M.~Hong}, {\em Perturbed proximal primal--dual algorithm
  for nonconvex nonsmooth optimization}, Mathematical Programming, 176 (2019),
  pp.~207--245.

\bibitem{he20121}
{\sc B.~He and X.~Yuan}, {\em On the o(1/n) convergence rate of the
  {Douglas--Rachford} alternating direction method}, SIAM Journal on Numerical
  Analysis, 50 (2012), pp.~700--709.

\bibitem{he2015non}
{\sc B.~He and X.~Yuan}, {\em On non-ergodic convergence rate of
  {Douglas--Rachford} alternating direction method of multipliers}, Numerische
  Mathematik, 130 (2015), pp.~567--577.

\bibitem{hestenes1969multiplier}
{\sc M.~R. Hestenes}, {\em Multiplier and gradient methods}, Journal of
  Optimization Theory and Applications, 4 (1969), pp.~303--320.

\bibitem{hong2016decomposing}
{\sc M.~Hong}, {\em Decomposing linearly constrained nonconvex problems by a
  proximal primal dual approach: algorithms, convergence, and applications}.
\newblock arXiv:1604.00543, 2016.

\bibitem{hong2017linear}
{\sc M.~Hong and Z.-Q. Luo}, {\em On the linear convergence of the alternating
  direction method of multipliers}, Mathematical Programming, 162 (2017),
  pp.~165--199.

\bibitem{hong2016convergence}
{\sc M.~Hong, Z.-Q. Luo, and M.~Razaviyayn}, {\em Convergence analysis of
  alternating direction method of multipliers for a family of nonconvex
  problems}, SIAM Journal on Optimization, 26 (2016), pp.~337--364.

\bibitem{jian2021qcqp}
{\sc J.~Jian, P.~Liu, J.~Yin, C.~Zhang, and M.~Chao}, {\em A {QCQP}-based
  splitting {SQP} algorithm for two-block nonconvex constrained optimization
  problems with application}, Journal of Computational and Applied Mathematics,
  390 (2021), p.~113368.

\bibitem{jian2020monotone}
{\sc J.~Jian, C.~Zhang, J.~Yin, L.~Yang, and G.~Ma}, {\em Monotone splitting
  sequential quadratic optimization algorithm with applications in electric
  power systems}, Journal of Optimization Theory and Applications, 186 (2020),
  pp.~226--247.

\bibitem{jiang2019structured}
{\sc B.~Jiang, T.~Lin, S.~Ma, and S.~Zhang}, {\em Structured nonconvex and
  nonsmooth optimization: algorithms and iteration complexity analysis},
  Computational Optimization and Applications, 72 (2019), pp.~115--157.

\bibitem{kong2020iteration}
{\sc W.~Kong, J.~G. Melo, and R.~D. Monteiro}, {\em Iteration-complexity of a
  proximal augmented {Lagrangian} method for solving nonconvex composite
  optimization problems with nonlinear convex constraints}, arXiv preprint
  arXiv:2008.07080,  (2020).

\bibitem{lanMonteiro2016alm}
{\sc G.~Lan and R.~Monteiro}, {\em Iteration-complexity of first-order
  augmented {L}agrangian methods for convex programming}, Mathematical
  Programming, 155 (2016), pp.~511--547.

\bibitem{li2021rate}
{\sc Z.~Li, P.-Y. Chen, S.~Liu, S.~Lu, and Y.~Xu}, {\em Rate-improved inexact
  augmented {Lagrangian} method for constrained nonconvex optimization}, in
  International Conference on Artificial Intelligence and Statistics, PMLR,
  2021, pp.~2170--2178.

\bibitem{LiXu2020almv2}
{\sc Z.~Li and Y.~Xu}, {\em Augmented {L}agrangian based first-order methods
  for convex-constrained programs with weakly-convex objective}.
\newblock arXiv:2003.08880v2, 2021.

\bibitem{lin2019inexact}
{\sc Q.~Lin, R.~Ma, and Y.~Xu}, {\em Inexact proximal-point penalty methods for
  constrained non-convex optimization}, arXiv preprint arXiv:1908.11518,
  (2019).

\bibitem{lin2018global}
{\sc T.~Lin, S.~Ma, and S.~Zhang}, {\em Global convergence of unmodified
  3-block {ADMM} for a class of convex minimization problems}, Journal of
  Scientific Computing, 76 (2018), pp.~69--88.

\bibitem{ma2019proximally}
{\sc R.~Ma, Q.~Lin, and T.~Yang}, {\em Proximally constrained methods for
  weakly convex optimization with weakly convex constraints}, arXiv preprint
  arXiv:1908.01871,  (2019).

\bibitem{makhdoumi_broadcast-based_2014}
{\sc A.~Makhdoumi and A.~Ozdaglar}, {\em Broadcast-based distributed
  alternating direction method of multipliers}, in 2014 52nd {Annual}
  {Allerton} {Conference} on {Communication}, {Control}, and {Computing}
  ({Allerton}), Monticello, IL, USA, Sept. 2014, IEEE, pp.~270--277,
  \url{https://doi.org/10.1109/ALLERTON.2014.7028466},
  \url{http://ieeexplore.ieee.org/document/7028466/} (accessed 2019-02-11).

\bibitem{melo2017jacobi}
{\sc J.~G. Melo and R.~D. Monteiro}, {\em Iteration-complexity of a
  {Jacobi}-type non-{Euclidean} {ADMM} for multi-block linearly constrained
  nonconvex programs}, arXiv preprint arXiv:1705.07229,  (2017).

\bibitem{melo2017iteration}
{\sc J.~G. Melo and R.~D. Monteiro}, {\em Iteration-complexity of a linearized
  proximal multiblock {ADMM} class for linearly constrained nonconvex
  optimization problems}, Available on: http://www. optimization-online. org,
  (2017).

\bibitem{melo2020almfullstep}
{\sc J.~G. Melo and R.~D.~C. Monteiro}, {\em Iteration-complexity of an inner
  accelerated inexact proximal augmented {L}agrangian method based on the
  classical {L}agrangian function and a full lagrange multiplier update}.
\newblock arXiv:2008.00562, 2020.

\bibitem{melo2020iteration}
{\sc J.~G. Melo, R.~D.~C. Monteiro, and H.~Wang}, {\em Iteration-complexity of
  an inexact proximal accelerated augmented {L}agrangian method for solving
  linearly constrained smooth nonconvex composite optimization problems}.
\newblock arXiv:2006.08048, 2020.

\bibitem{monteiro2013iteration}
{\sc R.~D. Monteiro and B.~F. Svaiter}, {\em Iteration-complexity of
  block-decomposition algorithms and the alternating direction method of
  multipliers}, SIAM Journal on Optimization, 23 (2013), pp.~475--507.

\bibitem{peaceman1955numerical}
{\sc D.~W. Peaceman and H.~H. Rachford, Jr}, {\em The numerical solution of
  parabolic and elliptic differential equations}, Journal of the Society for
  industrial and Applied Mathematics, 3 (1955), pp.~28--41.

\bibitem{powell1967method}
{\sc M.~J. Powell}, {\em A method for nonlinear constraints in minimization
  problems}, Optimization, R. Fletcher, ed.,  (1969), pp.~283--298.

\bibitem{rockafellar1970convex}
{\sc R.~T. Rockafellar}, {\em Convex Analysis}, no.~28, Princeton university
  press, 1970.

\bibitem{rockafellar1973multiplier}
{\sc R.~T. Rockafellar}, {\em The multiplier method of {Hestenes} and {Powell}
  applied to convex programming}, Journal of Optimization Theory and
  Applications, 12 (1973), pp.~555--562.

\bibitem{rockafellar1976augmented}
{\sc R.~T. Rockafellar}, {\em Augmented {L}agrangians and applications of the
  proximal point algorithm in convex programming}, Mathematics of Operations
  Research, 1 (1976), pp.~97--116.

\bibitem{rockafellar2009variational}
{\sc R.~T. Rockafellar and R.~J.-B. Wets}, {\em Variational Analysis},
  vol.~317, Springer Science \& Business Media, 2009.

\bibitem{ruszczynski2011nonlinear}
{\sc A.~Ruszczynski}, {\em Nonlinear Optimization}, Princeton university press,
  2011.

\bibitem{Sahin2019alm}
{\sc M.~F. Sahin, A.~Eftekhari, A.~Alacaoglu, F.~Latorre, and V.~Cevher}, {\em
  An inexact augmented {L}agrangian framework for nonconvex optimization with
  nonlinear constraints}, in Advances in Neural Information Processing Systems,
  2019, p.~13943–13955.

\bibitem{shi2020penalty1}
{\sc Q.~Shi and M.~Hong}, {\em Penalty dual decomposition method for nonsmooth
  nonconvex optimization—part i: Algorithms and convergence analysis}, IEEE
  Transactions on Signal Processing, 68 (2020), pp.~4108--4122.

\bibitem{shi2020penalty2}
{\sc Q.~Shi, M.~Hong, X.~Fu, and T.-H. Chang}, {\em Penalty dual decomposition
  method for nonsmooth nonconvex optimization—part ii: Applications}, IEEE
  Transactions on Signal Processing, 68 (2020), pp.~4242--4257.

\bibitem{shi_linear_2014}
{\sc W.~Shi, Q.~Ling, K.~Yuan, G.~Wu, and W.~Yin}, {\em On the {Linear}
  {Convergence} of the {ADMM} in {Decentralized} {Consensus} {Optimization}},
  IEEE Transactions on Signal Processing, 62 (2014), pp.~1750--1761,
  \url{https://doi.org/10.1109/TSP.2014.2304432},
  \url{http://ieeexplore.ieee.org/document/6731604/} (accessed 2019-01-07).

\bibitem{sun2021two}
{\sc K.~Sun and X.~A. Sun}, {\em A two-level {ADMM} algorithm for {AC OPF} with
  convergence guarantees}, IEEE Transactions on Power Systems,  (2021).

\bibitem{sun2021algorithms}
{\sc K.~Sun and X.~A. Sun}, {\em Algorithms for difference-of-convex programs
  based on difference-of-moreau-envelopes smoothing}, INFORMS Journal on
  Optimization,  (2022).

\bibitem{sun2019two}
{\sc K.~Sun and X.~A. Sun}, {\em A two-level distributed algorithm for
  nonconvex constrained optimization}, Computational Optimization and
  Applications, 84 (2023), pp.~609--649.

\bibitem{wang2015global}
{\sc Y.~Wang, W.~Yin, and J.~Zeng}, {\em Global convergence of {ADMM} in
  nonconvex nonsmooth optimization}, Journal of Scientific Computing,  (2015),
  pp.~1--35.

\bibitem{xie2021complexity}
{\sc Y.~Xie and S.~J. Wright}, {\em Complexity of proximal augmented
  {L}agrangian for nonconvex optimization with nonlinear equality constraints},
  Journal of Scientific Computing, 86 (2021), pp.~1--30.

\bibitem{xu2013block}
{\sc Y.~Xu and W.~Yin}, {\em A block coordinate descent method for regularized
  multiconvex optimization with applications to nonnegative tensor
  factorization and completion}, SIAM Journal on imaging sciences, 6 (2013),
  pp.~1758--1789.

\bibitem{xu2017globally}
{\sc Y.~Xu and W.~Yin}, {\em A globally convergent algorithm for nonconvex
  optimization based on block coordinate update}, Journal of Scientific
  Computing, 72 (2017), pp.~700--734.

\bibitem{zeng2021moreau}
{\sc J.~Zeng, W.~Yin, and D.-X. Zhou}, {\em Moreau envelope augmented
  {L}agrangian method for nonconvex optimization with linear constraints},
  arXiv preprint arXiv:2101.08519,  (2021).

\bibitem{zhang2020global}
{\sc J.~Zhang and Z.~Luo}, {\em A global dual error bound and its application
  to the analysis of linearly constrained nonconvex optimization}.
\newblock arXiv:2006.16440, 2020.

\bibitem{zhang2020proximal}
{\sc J.~Zhang and Z.-Q. Luo}, {\em A proximal alternating direction method of
  multiplier for linearly constrained nonconvex minimization}, SIAM Journal on
  Optimization, 30 (2020), pp.~2272--2302.

\bibitem{zhu2020first}
{\sc D.~Zhu, L.~Zhao, and S.~Zhang}, {\em A first-order primal-dual method for
  nonconvex constrained optimization based on the augmented {Lagrangian}},
  arXiv preprint arXiv:2007.12219,  (2020).

\end{thebibliography}
\end{document}